\newcommand{\stkout}[1]{\ifmmode\text{\sout{\ensuremath{#1}}}\else\sout{#1}\fi}
\newtheorem{theorem}{Theorem}[section]
\newtheorem{remark}[theorem]{Remark}
\newtheorem{hypothesis}[theorem]{Assumption}
\newtheorem{lemma}[theorem]{Lemma}
\newtheorem{prop}[theorem]{Proposition}
\newtheorem{corollary}[theorem]{Corollary}
\newtheorem{definition}[theorem]{Definition}
\newtheorem{example}[theorem]{Example}
\newcommand{\cp}[2]{\langle#1,#2\rangle}
\newcommand{\ds}{\displaystyle}
\newcommand{\tr}{\mathop{\mathrm{Tr}}\nolimits}
\def \R{\mathbb{R}}
\def\erre{\mathbb{R}}
\definecolor{red}{rgb}{1.0,0.0,0.0}
\def\red#1{{\textcolor{red}{#1}}}
\definecolor{blu}{rgb}{0.0,0.0,1.0}
\def\blu#1{{\textcolor{blu}{#1}}}
\definecolor{gre}{rgb}{0.03,0.50,0.03}
\title[Optimal control of stochastic delay differential equations: Optimal feedback controls.]{Optimal control of stochastic delay differential equations: Optimal feedback controls}
\author[de Feo]{Filippo de Feo}
\address{F.~de Feo: Department of Economics and Finance,  Luiss Guido Carli University, Rome, Italy, Department of Mathematics, Politecnico di Milano, Milan, Italy, and Institut für Mathematik, Technische Universität Berlin, Berlin, Germany}
\email{\href{mailto:defeo@math.tu-berlin.de}{defeo@math.tu-berlin.de}}
\author[\'{S}wi\k{e}ch]{Andrzej \'{S}wi\k{e}ch}
\address{A. \'{S}wi\k{e}ch: School of Mathematics, Georgia Institute of Technology, Atlanta, GA 30332, USA}
\email{\href{mailto:swiech@math.gatech.edu}{swiech@math.gatech.edu}}
\numberwithin{equation}{section}
\begin{document}

\begin{abstract}
In this manuscript, we study optimal control problems for stochastic delay differential equations using the dynamic programming approach in Hilbert spaces via viscosity solutions of the associated Hamilton-Jacobi-Bellman equations. We show how to use the partial $C^{1,\alpha}$-regularity of the value function established in \cite{defeo_federico_swiech} to obtain optimal feedback controls. The main result of the paper is a verification theorem which provides a sufficient condition for optimality using the value function. We then discuss its applicability to the construction of optimal feedback controls. We provide an economic application of our results to stochastic optimal advertising problems.
\end{abstract}

\maketitle
{\bf Mathematics Subject Classification (2020):} 49L25, 93E20, 49K45, 60H15, 49L20, 35R15, 49L12, 49N35,  34K50

%49K27: Optimality conditions for problems in abstract spaces
%49K45: Optimality conditions for problems involving randomness
%49L12:	Hamilton-Jacobi equations in optimal control and differential games
%49L20: Dynamic programming in optimal control and differential games
%49L25:	Viscosity solutions to Hamilton-Jacobi equations in optimal control and differential games
%49N35:	Optimal feedback synthesis
%60H15: SPDEs (aspects of stochastic analysis)
%93E20: Optimal stochastic control
%34K50 Stochastic functional-differential equations

{\bf Keywords and phrases:} stochastic optimal control, Hamilton-Jacobi-Bellman equation, optimal synthesis, verification theorem, viscosity solution, stochastic delay differential equation

\section{Introduction}

In this manuscript, we study optimal control problems for stochastic delay differential equations (SDDE) using the dynamic programming approach in Hilbert spaces and viscosity solutions of Hamilton-Jacobi-Bellman (HJB) equations \cite{fgs_book}. We consider the problem for which the state equation is a stochastic delay differential equation (SDDE) in $\mathbb{R}^n$ of the form
\begin{small}
\begin{equation*}
\begin{cases}
dy(t) = \ds b_0 \left ( y(t),\int_{-d}^0 a_1(\xi)y(t+\xi)\,d\xi ,u(t) \right)
         dt 
\ds  + \sigma_0 \left (y(t),\int_{-d}^0 a_2(\xi)y(t+\xi)\,d\xi \right)\, dW(t),\quad t\geq 0,\\
y(0)=x_0, \quad y(\xi)=x_1(\xi)\; \quad \forall \xi\in[-d,0),
\end{cases}
\end{equation*}
\end{small}
where $x_0 \in \mathbb R^n, x_1 \in L^2([-d,0];\mathbb R^n)$ and  $u(\cdot)$ is a suitable control process with values in $U \subset \mathbb R^p$. The goal is to minimize, over all admissible controls $u(\cdot)$, a cost functional
$$
J(x_0,x_1 ; u(\cdot))=\mathbb{E}\left[\int_0^{\infty} e^{-\rho t} l(y(t), u(t)) d t\right].
$$
Following \cite{defeo_federico_swiech}, 
%a similar optimal control problem for a stochastic delay differential equation, with controls also in the diffusion coefficients,
we rewrite the problem as an optimal control problem without delay for an abstract stochastic differential equation of the form 
\begin{equation}\label{eq:abstract_SDE_intro}
dY(t) = [\tilde A Y(t)+\tilde b(Y(t),u(t))]dt + \sigma(Y(t))\,dW(t), \quad t \geq 0 , \quad 
Y(0) = x:=(x_0,x_1) \in X,
\end{equation}
in the infinite dimensional Hilbert space $$X=\mathbb R^n\times L^2([-d,0];\mathbb R^n).$$ 
 In this setup, we have $$Y(t)=(y(t),y(t+\xi)_{\xi \in [-d,0]}),$$ so that $\mathbb R^n$ (the ``present'' space) is the space where we keep track of the current state of the controlled random variable $y(t)$ and $L^2([-d,0];\mathbb R^n)$ (the ``past'' space) is the space where we keep track of the relevant past part $y(t+\xi)_{\xi \in [-d,0]}$ via an unbounded maximal  dissipative operator $\tilde A$, which is  the generator of the so-called delay semigroup\footnote{to be precise, $\tilde A$ is a suitable bounded perturbation of the standard generator $A$ of the delay semigroup, see \cite{defeo_federico_swiech}}, and $\tilde b, \sigma$ are appropriate coefficients on $X$ with zero $L^2$-component (see Section \ref{sec:prelim}). We can then investigate this new equivalent problem using the dynamic programming approach  and study the value function $V$ of the problem and the associated  HJB equation, which is a partial differential equation in $X$ of the form
 \begin{equation*}
%\left\{% [inline block 0: 3 envs, 51340 chars -> data_tex | \begin{array}{l} \rho v(x) - \cp{\tilde A x}{Dv(x)} + H(x,Dv(x))   -  \frac{1}{2} \tr \left [ \sigma \left ( x\right) \s...]
\right).
\end{align*}
\item
 If $C>0$ is the constant in  \eqref{eq:b_sublinear} and \eqref{eq:G_bounded}, then, for every $x \in X, p, q \in X,  Y,Z \in \mathcal{S}(X)$,
\begin{align}\label{eq:Hamiltonian_local_lip}
| H(x, p+q,Y+Z)-H(x, p, Y)| \leq C\left(1+|x|_X\right)|q_0|_X+\frac{1}{2}C^2\left(1+|x|_X\right)^{2}|Z_{00}|.
\end{align}

\end{enumerate}
\end{lemma}

% while in \cite[Theorem 3.75]{fgs_book} $P_N$ is defined as the orthogonal projection in $X_{-1}$ onto $X^N = \mbox{Span} (e_1,...e_N)$ where $\{ e_i\}$ is any orthonormal basis in $X_{-1}$ made of elements of $X$. Then $P_N$ seen as bounded operator on $X$  in general is not an orthogonal projection onto $X^N$  because $e_i$ there may not be eigenvectors of $B$.

The Hamilton-Jacobi-Bellman (HJB) equation associated with the optimal control problem is the infinite dimensional PDE
\begin{equation}
\label{eq:HJB}
%\left\{\begin{array}{l}
\rho v(x) - \cp{\tilde A x}{Dv(x)} + H(x,Dv(x),D^2v(x))=0,
\quad x \in X.
%0 \leq t \leq T\\[8pt]
%v(T) = \varphi,
%end{array}\right.
\end{equation}
We recall the definition of $B$-continuous viscosity solution from \cite{fgs_book}.
\begin{definition}\label{def:test_functions}
\begin{itemize}
\item[(i)]  $\phi \colon X \to \mathbb R$ is a regular test function if
\begin{small}
\begin{align*}
\phi \in \Phi := \{ \phi \in C^2(X): \phi \textit{ is weakly sequentially lower semicontinuous and }
\\
D\phi , D^2\phi , \tilde A^*  D\phi \textit{ are uniformly continuous on }X\};
\end{align*}
\end{small}
\item[(ii)] $g \colon X \to \mathbb R$  is a radial test function if
\begin{align*}
g \in \mathcal G:= \{ g \in C^2(X): g(x)=g_0(|x|_X) \textit{ for some } g_0 \in C^2([0,\infty)) \textit{  non-decreasing}, g_{0}'(0)=0 \}.
\end{align*}
\end{itemize}
\end{definition}
%\begin{flushleft}

We remark that the set $\Phi$ is large enough to contain functions used in the proof of the comparison principle. In particular here, we notice that if $\tilde \phi \in C^2({X_{-1}})$ with $D_{-1} \tilde \phi,D_{-1} ^2 \tilde \phi$ being uniformly continuous, then its restriction $\phi$ to $X$ is in  $\Phi$. Indeed, it is easy to see that $\phi \in C^2(X)$ with
$$D\phi(x)=BD_{-1}\tilde \phi(x),\quad \forall x \in X, $$
so that $D\phi$ is uniformly continuous on $X$;  $\tilde A^* D\phi=\tilde A^* B^{1/2}B^{1/2} D_{-1}\tilde \phi$, so that, thanks to \eqref{eq:A*B^1/2}, $\tilde A^* D\phi$ is uniformly continuous on $X$; finally,  $D^2 \phi(x)=B  D_{-1}^2\tilde \phi(x)$, $x \in X$, so that $D^2\phi$ is uniformly continuous on $X$.
Note also that, if $g\in\mathcal{G}$, we have 
%\end{flushleft}
\begin{align}\label{eq:gradient_radial}
D g(x)=\left\{\begin{array}{l}
g_0^{\prime}(|x|_{X}) \frac{x}{|x|_{X}}, \quad \ \ \ \mbox{if} \  x \neq 0, \\
0,  \quad\quad  \ \ \ \ \ \ \ \ \ \ \ \ \,\,\,  \ \mbox{if} \ x=0.
\end{array}\right.
\end{align}

We say that a function is locally bounded if it is bounded on bounded subsets of $X$.
\begin{definition}\label{def:viscosity_solution}
\begin{enumerate}[(i)]
\item A locally bounded weakly sequentially upper semicontinuous function $v:X\to\R$ is a viscosity subsolution of \eqref{eq:HJB} if, whenever $v-\phi-g$ has a local maximum at $x \in X$ for $\phi \in \Phi, g \in \mathcal G$, then 
\begin{equation*}
\rho v(x) - \cp{ x}{\tilde A^* D\phi(x)}_{X} + H(x,D\phi(x)+Dg(x) ,D^2\phi(x)+D^2g(x))\leq 0.
\end{equation*}
\item 
A locally bounded weakly sequentially lower semicontinuous function $v:X\to\R$ is a viscosity supersolution of \eqref{eq:HJB} if, whenever $v+\phi+g$ has a local minimum at $x \in X$ for $\phi \in \Phi$, $g \in \mathcal G$, then 
\begin{equation*}
\rho v(x) + \cp{ x}{\tilde A^* D\phi(x)}_{X} + H(x,-D\phi(x)-Dg(x) ,-D^2\phi(x)-D^2g(x))\geq 0.
\end{equation*}
\item 
A viscosity solution of \eqref{eq:HJB} is a function $v:X\to\R$ which is both a viscosity subsolution and a viscosity supersolution of \eqref{eq:HJB}.
\end{enumerate}
\end{definition}
Define $\mathcal{S}:=\{u \colon X \to \mathbb{R}:  \exists k\geq 0 \ \mbox{satisfying \eqref{eq:k_set_uniqueness} and } \tilde C\geq 0 \,\mbox{such that}\,  |u(x)|\leq  \tilde C (1+|x|_X^k)\},$
where
\begin{equation}\label{eq:k_set_uniqueness}
\begin{cases}
k<\frac{\rho}{C+\frac{1}{2} C^{2}}, \ \ \ \ \ \ \ \ \ \ \ \ \ \ \ \quad \mbox{ if } \ \frac{\rho}{C+\frac{1}{2} C^{2}} \leq 2, \\
C k+\frac{1}{2} C^{2} k(k-1)<\rho,  \quad \mbox{ if } \frac{\rho}{C+\frac{1}{2} C^{2}}>2, 
\end{cases}
\end{equation}
and $C$ is the constant appearing in \eqref{eq:b_sublinear} and \eqref{eq:G_bounded}.

It was proved in \cite{defeo_federico_swiech} that $V$ is the unique viscosity solution of \eqref{eq:HJB} in $\mathcal{S}$.

\begin{theorem}(\cite[Theorem 5.4]{defeo_federico_swiech})\label{th:existence_uniqueness_viscosity_infinite}
Let Assumptions \ref{hp:state}, \ref{hp:cost}, and \ref{hp:discount} hold.
The value function $V$ is the unique viscosity solution of \eqref{eq:HJB} in the set $\mathcal{S}$.
\end{theorem}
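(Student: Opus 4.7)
The plan is to establish the result in two parts: existence (showing $V$ is a viscosity solution) and uniqueness (a comparison theorem in $\mathcal{S}$). Growth and regularity of $V$ in the right class are already in hand: Proposition \ref{prop:growth_V} gives the linear growth placing $V \in \mathcal{S}$, and Proposition \ref{prop:V_continuous_norm_-1} gives $|\cdot|_{-1}$-Lipschitz continuity, hence $B$-continuity, so the semicontinuity required in Definition \ref{def:viscosity_solution} is free.

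For the existence part, the standard route is to first prove a dynamic programming principle (DPP) for $V$, of the form
\begin{equation*}
V(x) = \inf_{u(\cdot)\in \mathcal U} \mathbb{E}\left[\int_0^{t\wedge \tau} e^{-\rho s} L(Y^{x,u}(s),u(s))\,ds + e^{-\rho(t\wedge\tau)} V(Y^{x,u}(t\wedge\tau))\right],
\end{equation*}
for every stopping time $\tau$ and $t\ge 0$, using the Markovianity of the lifted equation \eqref{eq:abstract_dissipative_operator} and a standard measurable selection argument to handle the generalized control set $\mathcal U$. From the DPP one derives the subsolution and supersolution inequalities at any point $x$ where $V-\phi-g$ (respectively $V+\phi+g$) attains a local maximum (respectively minimum). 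The test function $\phi \in \Phi$ is regular enough to apply It\^o's formula to the mild solution $Y^{x,u}$ in the form in which $\langle \tilde A Y, D\phi(Y)\rangle$ gets replaced by $\langle Y, \tilde A^* D\phi(Y)\rangle$, which is why $\tilde A^* D\phi$ is required to be in $\mathcal L(X)$ in the definition of $\Phi$. Expanding $e^{-\rho t}(\phi+g)(Y^{x,u}(t))$, dividing by $t$ and sending $t\to 0^+$ yields the desired sub/supersolution inequalities, using on the supersolution side a judicious choice of a constant control $u \in U$ and on the subsolution side an $\varepsilon$-optimal control and taking $\varepsilon \to 0$.

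For uniqueness, the argument is a comparison theorem between a subsolution $v$ and a supersolution $w$ in $\mathcal S$. I would double variables with the $|\cdot|_{-1}$ metric and a radial penalty to gain compactness compatible with the weak topology, studying
\begin{equation*}
\Phi_{\varepsilon,\delta}(x,y) = v(x) - w(y) - \frac{|x-y|_{-1}^{2}}{2\varepsilon} - \delta(g_0(|x|_X^2)+g_0(|y|_X^2)),
\end{equation*}
where $g_0$ grows slower than the power $k$ in \eqref{eq:k_set_uniqueness} so that $\Phi_{\varepsilon,\delta}$ attains a maximum. Projecting onto $X^N$ (using $P_N, Q_N$ as in Section \ref{sec:operator_B}) and passing to Ekeland--perturbed maxima reduces the analysis to a finite-dimensional Crandall--Ishii setting, producing matrices $Y,Z$ satisfying the inequality in item (iv) of Lemma \ref{lemma:properties_H}. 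Subtracting the two viscosity inequalities, the key contribution comes from the Hamiltonian difference, which is controlled via \eqref{eq:H_norm_-1} and \eqref{eq:Hamiltonian_local_lip}, while the error from finite-dimensional projection is absorbed using \eqref{eq:H_lambda_BQ_N} and \eqref{eq:trace_goes_to_zero}. Sending $N\to\infty$, $\varepsilon\to 0$, and $\delta\to 0$ (in that order) produces the contradiction $\sup(v-w)\le 0$, giving $v\le w$.

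The hardest step is the comparison argument and, within it, the tight control of the Hamiltonian difference under doubling of variables: the drift $\tilde b$ is only $|\cdot|_{-1}$-Lipschitz, not $|\cdot|_X$-Lipschitz, so one must penalize in the weak norm $|\cdot|_{-1}$, but the unbounded operator $\tilde A$ is not compatible with $|\cdot|_{-1}$ in a naive way. This is exactly what \eqref{eq:H_norm_-1} together with the weak $B$-condition of Proposition \ref{prop:weak_b} (with $C_0=0$) is designed to handle; it allows the doubling-of-variables penalty $|x-y|_{-1}^{2}/(2\varepsilon)$ to interact with the unbounded operator term only through the good constant $C_0=0$, so that the term $\langle \tilde A^* B(x-y)/\varepsilon, x-y\rangle_X$ is non-positive and can be discarded. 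The remaining technicality is the growth-constraint bookkeeping tied to the set $\mathcal{S}$ and the exponent $k$ from \eqref{eq:k_set_uniqueness}, chosen precisely so that the radial perturbation $\delta g_0(|x|_X^2)$ can be made negligible while still producing a maximum.
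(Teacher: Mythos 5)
The paper does not prove Theorem \ref{th:existence_uniqueness_viscosity_infinite}; it recalls it verbatim from \cite[Theorem 5.4]{defeo_federico_swiech}, so there is no internal proof to compare against. That said, your two-part plan (DPP and a short-time Taylor/It\^o argument for existence; doubling of variables in the $|\cdot|_{-1}$-metric, radial penalization, finite-dimensional reduction and the $B$-continuous maximum principle for uniqueness) is exactly the framework of \cite{fgs_book} on which the cited proof rests, and your identification of the two load-bearing ingredients --- the weak $B$-condition with $C_0=0$ killing the $\langle \tilde A^* B(x-y),x-y\rangle$ term, and the $|\cdot|_{-1}$-Lipschitz structure of $\tilde b$, $\sigma$, $L$ entering through Lemma \ref{lemma:properties_H}(iv) --- is on target. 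Your remark that $\tilde A^* D\phi \in \mathcal{L}(X)$ is what makes the Dynkin/It\^o step work along mild solutions (via Yosida approximation of $\tilde A$, as in Lemma \ref{lemma:ito} of this paper) is also the right explanation.

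Two points are backwards or imprecise. First, for the penalized functional $\Phi_{\varepsilon,\delta}$ to attain a maximum, the radial weight $g_0$ must grow \emph{at least as fast as} the polynomial growth of $v-w$ (degree $k$), not slower as you wrote; the constraint \eqref{eq:k_set_uniqueness} is exactly the statement that $\rho$ is large enough that, for such a $g_0$ (of degree slightly above $k$ but still admissible), the weight's contribution to the Hamiltonian inequality --- which scales like $\rho g_0 - Cg_0'|x| - \tfrac12 C^2 g_0''|x|^2$ --- has the good sign before $\delta\to 0$. Second, in the existence step your control choices are swapped: since $H$ contains a supremum over $u$, the \emph{sub}solution inequality requires bounding that supremum, which is done by taking an arbitrary constant control $u\in U$ in the DPP and then passing $t\to 0^+$; the \emph{super}solution inequality only needs to exhibit a nearly maximizing $u$, which is what the $\varepsilon$-optimal control provides. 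With those two corrections, your sketch matches the argument the paper cites.
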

\subsection{Partial regularity of \texorpdfstring{$V$}{V}}\label{subsec:regularity}
In this subsection we recall the partial regularity result for $V$ with respect to the $x_0$-variable which was obtained in \cite{defeo_federico_swiech}.

\begin{hypothesis}\label{hp:uniform_ellipticity}
For every $R>0$  there exists $\lambda_{R} >0$ such that
$$\sigma_0(x) \sigma_0(x)^T\geq \lambda_{R} I, \quad \forall x\,\ \mbox{such that}\, \ |x|_{X} \leq R.$$
\end{hypothesis}
For every $\bar x_1 \in L^2$ we define 
$$V^{\bar x_1}(x_0):=V(x_0,\bar x_1), \quad \forall x_0 \in \mathbb R^n.$$

\begin{theorem}(\cite[Theorem 6.5]{defeo_federico_swiech})\label{th:C1alpha}
Let Assumptions \ref{hp:state}, \ref{hp:cost}, \ref{hp:discount},  and \ref{hp:uniform_ellipticity} hold. For every $p>n$ and  every fixed $\bar x_1 \in L^2$, we have $V^{\bar x_1}\in W^{2,p}_{\rm loc}(\mathbb R^n)$; thus,  by Sobolev embedding, $V^{\bar x_1}\in C^{1,\alpha}_{\rm loc}({\mathbb{R}}^n)$ for all $0<\alpha<1$.
Moreover, for every $R>0$, there exists $C_{R}>0$ such that 
$$|V^{\bar x_1}|_{W^{2,p}(B_{R})}\leq C_R, \ \ \ \ \forall \bar x_{1}\ \mbox{such that}\, \ |\bar x_1|_{L^{2}}\leq R.$$ 
Finally, $D_{x_0}V$ is continuous with respect to the $|\cdot |_{-1}$ norm on bounded sets of $X$. In particular, $D_{x_0}V$ is continuous in $X$.
\end{theorem}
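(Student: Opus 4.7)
The plan is to fix $\bar x_1\in L^2$, show that $V^{\bar x_1}$ is a (finite-dimensional) viscosity solution on $\mathbb R^n$ of a convex, uniformly elliptic, fully nonlinear HJB equation, and then invoke the interior $W^{2,p}$-theory for $L^p$-viscosity solutions of such equations (in the spirit of Caffarelli; see also Crandall--Kocan--\'Swi\k{e}ch). Sobolev embedding $W^{2,p}\hookrightarrow C^{1,\alpha}$ with $\alpha=1-n/p$ then gives $V^{\bar x_1}\in C^{1,\alpha}_{\rm loc}(\mathbb R^n)$ for every $\alpha\in(0,1)$. Joint $|\cdot|_{-1}$-continuity of $D_{x_0}V$ will follow by combining the uniform $W^{2,p}$-bounds with the global $|\cdot|_{-1}$-Lipschitz continuity of $V$ from Proposition \ref{prop:V_continuous_norm_-1} and a standard Arzel\`a--Ascoli argument.

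For the reduction, one tests $V$ at a point $(\bar x_0,\bar x_1)$ with test functions of the form $\phi(x_0,x_1)=\phi_0(x_0)$, where $\phi_0\in C^2(\mathbb R^n)$ is truncated so that $\phi\in\Phi$, together with a radial $g\in\mathcal G$ enforcing a strict local extremum at $(\bar x_0,\bar x_1)$. For such $\phi$ one has $D\phi=(\nabla\phi_0(x_0),0)\in D(\tilde A^*)$ and $\tilde A^*D\phi(x)=(-\nabla\phi_0(x_0),0)$, hence $\langle x,\tilde A^*D\phi(x)\rangle_X=-x_0\cdot\nabla\phi_0(x_0)$. Substituting into the viscosity inequality from Definition \ref{def:viscosity_solution} and then letting the $\mathcal G$-penalty vanish, the $-x_0\cdot p_0$ term in $\tilde H$ cancels against $\langle x,\tilde A^*D\phi\rangle_X$, and one obtains that $V^{\bar x_1}$ solves, in the usual finite-dimensional viscosity sense on $\mathbb R^n$,
\[
\rho v(x_0)+\sup_{u\in U}\Bigl\{-b_0\bigl(x_0,\textstyle\int_{-d}^0 a_1(\xi)\bar x_1(\xi)\,d\xi,u\bigr)\cdot\nabla v(x_0)-l(x_0,u)\Bigr\}-\tfrac12\tr\bigl[\sigma_0\sigma_0^T\nabla^2 v(x_0)\bigr]=0,
\]
with $\sigma_0$ evaluated at $(x_0,\int_{-d}^0 a_2(\xi)\bar x_1(\xi)\,d\xi)$ as in the definition of $\sigma$.

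By Assumption \ref{hp:uniform_ellipticity} this frozen equation is uniformly elliptic on each $B_R^0$, convex in $(\nabla v,\nabla^2 v)$, and has coefficients bounded and Lipschitz in $x_0$ via Assumptions \ref{hp:state}, \ref{hp:cost}. Interior $W^{2,p}$-estimates then give $V^{\bar x_1}\in W^{2,p}_{\rm loc}(\mathbb R^n)$ for every $p<\infty$, with $|V^{\bar x_1}|_{W^{2,p}(B_R^0)}$ controlled by $|V^{\bar x_1}|_{L^\infty(B_{2R}^0)}$ and the structural data of the frozen Hamiltonian. Since the Hamiltonian depends on $\bar x_1$ only through the integrals $\int a_i(\xi)\bar x_1(\xi)\,d\xi$, whose norms are bounded by a constant multiple of $|\bar x_1|_{L^2}$, and since Proposition \ref{prop:growth_V} bounds $|V^{\bar x_1}|_{L^\infty(B_{2R}^0)}$ uniformly in $|\bar x_1|_{L^2}\leq R$, the resulting estimates are uniform in this range, yielding $|V^{\bar x_1}|_{W^{2,p}(B_R^0)}\leq C_R$.

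For the last statement, let $x^k=(x_0^k,x_1^k)\to x=(x_0,x_1)$ in $|\cdot|_{-1}$ with $\sup_k|x^k|_X<\infty$. Proposition \ref{prop:V_continuous_norm_-1} gives $V^{x_1^k}\to V^{x_1}$ uniformly on compact subsets of $\mathbb R^n$; by the uniform $W^{2,p}$-bound, $\{V^{x_1^k}\}$ is precompact in $C^1_{\rm loc}(\mathbb R^n)$, and uniqueness of the $C^0$-limit pins down the full $C^1_{\rm loc}$-limit as $V^{x_1}$. Combined with $x_0^k\to x_0$ in $\mathbb R^n$ and the local $C^{0,\alpha}$-modulus of $\nabla V^{x_1}$, this gives $D_{x_0}V(x^k)\to D_{x_0}V(x)$; continuity in $|\cdot|_X$ then follows from $|\cdot|_{-1}\leq|\tilde A^{-1}|_{\mathcal L(X)}|\cdot|_X$. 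The main technical obstacle is the reduction step: verifying that arbitrary smooth $\phi_0\in C^2(\mathbb R^n)$ can be realised (after truncation and radial $\mathcal G$-perturbation) as admissible test functions in the classes $\Phi$, $\mathcal G$ of Definition \ref{def:test_functions}, and that the cancellation between $\langle x,\tilde A^*D\phi\rangle_X$ and the transport term $-x_0\cdot p_0$ of $\tilde H$ translates cleanly into the standard finite-dimensional viscosity inequality at an arbitrary touching point $(\bar x_0,\bar x_1)$.
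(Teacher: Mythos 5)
This paper does not prove Theorem \ref{th:C1alpha}; it recalls it verbatim from \cite[Theorem~6.5]{defeo_federico_swiech}, so there is no in-text proof to compare against. Judged on its own terms, your high-level plan — slice $V$ at fixed $\bar x_1$, identify a uniformly elliptic, convex HJB equation in $\mathbb R^n$, invoke Caffarelli/Crandall--Kocan--\'Swi\k{e}ch interior $W^{2,p}$ estimates, then Sobolev embedding, and finally Arzel\`a--Ascoli for the $|\cdot|_{-1}$-continuity of $D_{x_0}V$ — is the right skeleton, and you are right that uniform ellipticity on bounded sets (Assumption~\ref{hp:uniform_ellipticity}), convexity of the Hamiltonian in $(p,Z)$, and Propositions~\ref{prop:growth_V}, \ref{prop:V_continuous_norm_-1} supply the required structural and $L^\infty$ bounds.

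The genuine gap, however, is precisely the reduction step you defer to a final remark, and the sketch you give does not repair it. First, a radial test function $g\in\mathcal G$ has the form $g(x)=g_0(|x|_X)$ and is centered at the origin; it cannot be used to "enforce a strict local extremum at $(\bar x_0,\bar x_1)$". Second, and more fundamentally, a local maximum of $V^{\bar x_1}-\phi_0$ over $x_0\in\mathbb R^n$ does not yield a local maximum of $V-\phi-g$ over $(x_0,x_1)\in X$, which is what Definition~\ref{def:viscosity_solution} requires; some penalization in $x_1$ is unavoidable. Third, any admissible $x_1$-penalty (it must have Fr\'echet derivative in $D(\tilde A^*)$, which forces one to build it out of $B$ or the $|\cdot|_{-1}$ norm, not $|\cdot|_{L^2}^2$) produces additional contributions to $\langle x,\tilde A^*D\phi(x)\rangle_X$ that do \emph{not} vanish as the penalty parameter tends to zero — using $|x_1^\eta-\bar x_1|_{-1}=O(\eta)$ from the $|\cdot|_{-1}$-Lipschitz estimate one only gets that these terms stay $O(1)$. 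Consequently, $V^{\bar x_1}$ is not a viscosity solution of the frozen equation you wrote down; at best it is an $L^p$-viscosity sub- and supersolution of that equation with a bounded right-hand side coming from these residual $\tilde A^*$-terms, and controlling this forcing term via the global $|\cdot|_{-1}$-Lipschitz bound (Proposition~\ref{prop:V_continuous_norm_-1}) is the actual substance of the argument. Once that is in place the $W^{2,p}$ machinery, the Sobolev embedding $W^{2,p}\hookrightarrow C^{1,\alpha}$ for $p>n/(1-\alpha)$, the uniformity of the bound in $|\bar x_1|_{L^2}\leq R$, and the interpolation/Arzel\`a--Ascoli argument for $|\cdot|_{-1}$-continuity of $D_{x_0}V$ all go through as you describe.
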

%\begin{remark}\label{rem:regularity_for_any_viscosity_sol}
%Notice that the same statement (with the same proof) holds for any viscosity solution $w$ of the  HJB equation \eqref{eq:HJB} which is Lipschitz %with respect $|\cdot |_{-1}$.
%\end{remark}

\section{Approximations by inf-convolutions}\label{sec:approximation via inf-convolutions}
In this section we begin the process of approximating the value function $V$ by more regular functions. The first step is to use an appropriately defined inf-convolution $V_\epsilon$ of $V$ and prove that $V_\epsilon$ is a viscosity super-solution of a perturbed HJB equation. To do this we need one more assumption about $V$.

We extend $V$ to the function $\tilde V:X_{-1}\to\mathbb{R}$ which then also satisfies \eqref{Va-1} for all $x,y\in X_{-1}$.

\begin{hypothesis}\label{hp:convexity}
$V$ is $|\cdot|_{-1}$-semiconvex \blu{on $X$,} i.e. there exists $C\geq 0$ (called a semiconvexity constant) such that $V(x)+C|x|^2_{-1}$ is convex \blu{on $X$}.
\end{hypothesis}
The $|\cdot|_{-1}$-semiconvexity is equivalent to the requirement that there exists $C\geq 0$ such that
\begin{equation}\label{eq:-1semiconvex}
\lambda V(x) +(1-\lambda) V(y)- V(\lambda x + (1-\lambda)y) \geq  -C  \lambda (1-\lambda)|x|^2_{-1}\quad \forall \lambda \in [0,1], x,y \in X.
\end{equation}
If $V$ satisfies \eqref{eq:-1semiconvex} then, due to the continuity of $\tilde V$ in $X_{-1}$, the function $\tilde V$ is semiconvex on $X_{-1}$ and it satisfies \eqref{eq:-1semiconvex} for all 
$x,y\in X_{-1}$.  We say that a function $f$ is $|\cdot|_{-1}$-semiconcave if there is $C\geq 0$ such that $f(x)-C|x|^2_{-1}$ is concave.

It is rather well known that the semiconcavity of $V$ can be obtained under standard hypotheses on the data, e.g. see \cite{yong_zhou}, \cite{defeo_swiech_wessels} for the finite and the infinite-dimensional cases respectively.  Here instead we require the $|\cdot|_{-1}$-semiconvexity of $V$. We provide two examples where $V$ is convex in the spirit of \cite{goldys_1,goldys_2}, where the authors prove the concavity of $V$ for a maximization problem (which corresponds to the convexity for a minimization problem considered here). See also \cite{defeo_swiech_wessels}  for other results on $|\cdot|_{-1}$-semiconvexity of $V$ in the finite horizon case.

We remark that in the approximation procedure of Sections \ref{sec:approximation via inf-convolutions} and \ref{sec:lions_approx}, $V$ can be replaced by any viscosity supersolution of \eqref{eq:HJB} which satisfies \eqref{Va-1}, the regularity properties of Theorem \ref{th:C1alpha} and Assumption \ref{hp:convexity}.

\begin{example}\label{ex:convex1}
We will use the notation introduced in Appendix \ref{sec:comparison}.
Let Assumptions \ref{hp:state} and \ref{hp:cost} hold. Let $U$ be convex, $a_1(\xi)\geq 0$ for every $\xi \in [-d,0]$, let each component of $b_0(x,z,u)$ be jointly concave and let \eqref{eq:assumption_b0_comparison_monotonone}, \eqref{eq:assumption_b0_comparison} hold. Assume that $\sigma_0(x,z,u)=\sigma_0 \in M^{n \times q}$ and $l(x,u)$ is jointly convex and non-increasing with respect to $x \in \mathbb{R}^n$ for each fixed $u \in U$ (here the inequality $x \leq y$ for $x,y \in \mathbb R^n$ is understood component-wise as explained in Appendix \ref{sec:comparison}).  We show that under these hypotheses $V$ is convex.

Indeed, let $x, \bar x \in X$, $\lambda \in [0,1]$, $\epsilon>0$ and $u^\epsilon(\cdot), \bar u^\epsilon(\cdot)$ be $\epsilon$-optimal controls for the initial conditions $x, \bar x$ respectively. By \eqref{eq:equivrefprsp} we can assume that the control processes are defined on the same reference probability space. Denote by $x(t), \bar x(t)$ the solutions of \eqref{eq:SDDE} with initial state $x, \bar x$ and controls $u^\epsilon(\cdot), \bar u^\epsilon(\cdot)$ respectively. Moreover, set $x_\lambda=\lambda x + (1-\lambda) \bar x$, $u^\epsilon_\lambda(\cdot)=\lambda u^\epsilon(\cdot) + (1-\lambda) \bar u^\epsilon(\cdot)$ and let $x(t;x_\lambda, u^\epsilon_\lambda(\cdot))$ be the solution of \eqref{eq:SDDE} with the initial state $x_\lambda$ and control $u^\epsilon_\lambda(\cdot)$. Finally, set $x_\lambda(t)=\lambda x(t)+ (1-\lambda) \bar x(t)$. Note that since \eqref{eq:SDDE} is not linear $x(t,x_\lambda, u^\epsilon_\lambda(\cdot)) \neq x_\lambda(t)$ in general. First, since each component of $b_0(x,y,u)$ is jointly concave,  we have
\begin{small}
\begin{align*}
dx_\lambda(t)& = \lambda dx(t)+(1-\lambda) d\bar x(t)\\
&=\left [\lambda b_0 \left ( x(t),\int_{-d}^0 a_1(\xi)x(t+\xi)\,d\xi ,u^\epsilon(t) \right) +(1-\lambda) b_0 \left ( \bar x(t),\int_{-d}^0 a_1(\xi)\bar x(t+\xi)\,d\xi ,\bar u^\epsilon(t) \right)  \right] dt+ \sigma_0 dW(t)\\
& \leq b_0 \left ( x_\lambda(t),\int_{-d}^0 a_1(\xi)x_\lambda(t+\xi)\,d\xi ,u^\epsilon_\lambda(t) \right)  dt+ \sigma_0 dW(t).
\end{align*}
\end{small}
Regarding $x(t;x_\lambda, u_\lambda^\epsilon(\cdot))$, of course we have
\begin{align*}
d x(t;x_\lambda, u_\lambda^\epsilon(\cdot))  = b_0 \left ( x(t;x_\lambda, u_\lambda^\epsilon(\cdot)),\int_{-d}^0 a_1(\xi)x(t+\xi;x_\lambda, u_\lambda^\epsilon(\cdot))d\xi ,u^\epsilon_\lambda(t) \right)  dt+ \sigma_0 dW(t).
\end{align*}
Thus, by the comparison lemma, Lemma \ref{lemma:comparison_sdde}, we have
\begin{equation}\label{eq:x_lambda leq x}
x_\lambda(t) \leq x(t;x_\lambda, u_\lambda^\epsilon(\cdot)) \quad \forall t \geq 0.
\end{equation}
Finally, by \eqref{eq:x_lambda leq x}, the fact that $l(\cdot,u)$ is non-increasing, the joint convexity of $l(x,u)$ and since $u^\epsilon(\cdot), \bar u^\epsilon(\cdot)$ are $\epsilon$-optimal controls for the initial states $x, \bar x$ respectively, we have
\begin{align*}
V(x_\lambda) \leq J(x_\lambda; u^\epsilon_\lambda)& = \mathbb E\left[ \int_0^{+\infty} e^{-\rho t}l(x(t;x_\lambda, u_\lambda^\epsilon(\cdot)),u^\epsilon_\lambda(t))dt \right] \leq  \mathbb E\left[ \int_0^{+\infty} e^{-\rho t}l(x_\lambda(t),u_\lambda^\epsilon(t))dt \right]\\
& \leq   \mathbb E\left[ \int_0^{+\infty} e^{-\rho t}[\lambda l(x(t),u^\epsilon(t))+(1-\lambda) l(\bar x(t),\bar u^\epsilon(t))]dt \right] \\
& = \lambda J(x;u^\epsilon(\cdot)) +(1-\lambda) J(\bar x; \bar u^\epsilon(\cdot)) \leq \lambda V(x)+ (1-\lambda) V(\bar x) + \epsilon
\end{align*}
so that by letting  $\epsilon \to 0$ we obtain the convexity of $V$.
\end{example}

\begin{example}\label{ex:convex2} Assume that $U$ is convex, $b_0(x,y,u)$ is linear in $x,y,u$, $\sigma_0(x,y,u)=\sigma_0$ and $l(x,u)$ is jointly convex (here we do not require that $l(x,u)$ is non-increasing in $x$ for a fixed $u$). Then $V$ is convex.

As in the previous example let $x, \bar x \in X$, $\lambda \in [0,1]$, $\epsilon>0$ and $u^\epsilon(\cdot), \bar u^\epsilon(\cdot)$ be 
$\epsilon$-optimal controls for the initial conditions $x, \bar x$ respectively (defined on the same reference probability space). We use $x(t), \bar x(t), x_\lambda, x_\lambda(t),u_\lambda^\epsilon(\cdot)$, $x(t;x_\lambda, u_\lambda^\epsilon(\cdot))$ to denote the same objects as in Example \ref{ex:convex1}. Note that since \eqref{eq:SDDE} is now linear, 
\begin{equation}\label{eq:x=x_lambda}
x(t;x_\lambda, u_\lambda^\epsilon(\cdot))=x_\lambda(t):=\lambda x(t)+ (1-\lambda) \bar x(t)
\end{equation}
By \eqref{eq:x=x_lambda} we have 
\begin{align*}
V(x_\lambda) \leq J(x_\lambda; u_\lambda^\epsilon(\cdot))&= \mathbb E\left[ \int_0^{+\infty} e^{-\rho t}l(x(t;x_\lambda, u^\epsilon_\lambda(\cdot)),u^\epsilon_\lambda(t))dt \right]  = \mathbb E \left[ \int_0^{+\infty} e^{-\rho t}l(x_\lambda(t),u_\lambda^\epsilon(t))dt \right]
\end{align*}
and hence, proceeding as in the previous example, we obtain the convexity of $V.$
\end{example}
Let $\epsilon>0$. We define by $\tilde V_{\epsilon}$ the inf-convolution of $\tilde V$,
$$\tilde V_{\epsilon}(x):=\inf_{y \in X_{-1}}\left [ \tilde V(y)+\frac{1}{2\epsilon}|x-y|^2_{-1}\right]=\inf_{y \in X}\left [V(y)+\frac{1}{2\epsilon}|x-y|^2_{-1}\right]=-\sup_{y \in X}\left [- V(y)-\frac{1}{2\epsilon}|x-y|^2_{-1}\right].$$
The function $\tilde V_{\epsilon}$ restricted to $X$ will be denoted by $V_{\epsilon}$. We have the following result.

\begin{lemma}\label{lem:c11}
Let Assumptions \ref{hp:state}, \ref{hp:cost}, \ref{hp:discount} and \ref{hp:convexity} hold.
\begin{enumerate}[(i)]
\item $\tilde V_\epsilon$ satisfies \eqref{Va-1}, it is semiconcave in $X_{-1}$ and, if $\epsilon$ is small enough, it is semiconvex in $X_{-1}$ with a semiconvexity constant $C$ independent of $\epsilon$.
\item $\tilde V_\epsilon \in C^{1,1}(X_{-1})$. It follows that $V_\epsilon \in C^{1,1}(X)$ with
\begin{align}\label{eq:gradient_w_gradient_w_-1}
DV_\epsilon(x)=B D_{-1} \tilde V_\epsilon(x) \in D(\tilde A^*)
\end{align} 
 for every $x \in X$, where $D_{-1}\tilde V_\epsilon$ denotes the Frechet derivative of $\tilde V_\epsilon$ in $X_{-1}$. Moreover, for every $x_1 \in L^2[-d,0]$, $D^2_{x_0}V_\epsilon(x)$ exists for a.e. $x_0 \in \mathbb R^n$ with $x=(x_0,x_1)$.
\item We have 
\begin{align}\label{eq:convergence_inf_convolution}
\lim_{\epsilon \to 0} V_\epsilon  = V \quad \textit{uniformly on}\,\,X,\quad \lim_{\epsilon \to 0} D_{x_0}V_\epsilon  = D_{x_0}V \quad \textit{uniformly on } B_R \subset X \quad \forall R>0.
\end{align} 
\end{enumerate}
\end{lemma}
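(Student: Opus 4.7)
My plan is to exploit the fact that the Moreau--Yosida inf-convolution on the Hilbert space $X_{-1}$ preserves Lipschitz constants, creates semiconcavity of order $1/\epsilon$, and essentially preserves semiconvexity when $\epsilon$ is small; the duality $\langle u,v\rangle_{-1}=\langle Bu,v\rangle_X$ between $X$ and $X_{-1}$ then transfers the regularity back to $X$.

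\emph{Part (i).} The Lipschitz bound with the same constant $K$ is immediate from translation invariance: writing $\tilde V_\epsilon(x)=\inf_{z\in X_{-1}}[\tilde V(x-z)+\tfrac{1}{2\epsilon}|z|_{-1}^2]$ one gets $|\tilde V_\epsilon(x)-\tilde V_\epsilon(y)|\le \sup_z|\tilde V(x-z)-\tilde V(y-z)|\le K|x-y|_{-1}$. Semiconcavity with constant $1/\epsilon$ comes from expanding the square:
$$\tilde V_\epsilon(x)-\frac{1}{2\epsilon}|x|_{-1}^2 = \inf_{y\in X_{-1}}\left[\tilde V(y) - \frac{1}{\epsilon}\langle x,y\rangle_{-1}+\frac{1}{2\epsilon}|y|_{-1}^2\right],$$
which is an infimum of affine functions of $x$, hence concave. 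The delicate step is the $\epsilon$-uniform semiconvexity. I would take $\delta$-almost minimizers $y_i$ for $\tilde V_\epsilon(x_i)$, set $y_\lambda=\lambda y_1+(1-\lambda)y_2$, $x_\lambda=\lambda x_1+(1-\lambda)x_2$, and combine the parallelogram identity
$$|x_\lambda-y_\lambda|_{-1}^2 = \lambda|x_1-y_1|_{-1}^2+(1-\lambda)|x_2-y_2|_{-1}^2-\lambda(1-\lambda)|(x_1-y_1)-(x_2-y_2)|_{-1}^2$$
with the $C$-semiconvexity of $\tilde V$ and Young's inequality $|y_1-y_2|_{-1}^2\le 2|x_1-x_2|_{-1}^2 + 2|(x_1-y_1)-(x_2-y_2)|_{-1}^2$. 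The coefficient of the cross term $|(x_1-y_1)-(x_2-y_2)|_{-1}^2$ then becomes $\lambda(1-\lambda)(2C-\tfrac{1}{2\epsilon})$, which is nonpositive whenever $\epsilon\le 1/(4C)$; letting $\delta\to 0$ yields $2C$-semiconvexity of $\tilde V_\epsilon$ with a constant independent of $\epsilon$. This is the main technical obstacle, since semiconvexity is not preserved by inf-convolution in general and absorption of the cross term requires both the quadratic form of the penalty and the smallness of $\epsilon$.

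\emph{Part (ii).} Joint semiconvexity with uniform constant and $1/\epsilon$-semiconcavity on the Hilbert space $X_{-1}$ classically force $\tilde V_\epsilon$ to be Fr\'echet differentiable everywhere with Lipschitz gradient, so $\tilde V_\epsilon\in C^{1,1}(X_{-1})$. The gradient identity \eqref{eq:gradient_w_gradient_w_-1} follows from the chain rule and the duality $\langle\cdot,\cdot\rangle_{-1}=\langle B\cdot,\cdot\rangle_X$: for $x,h\in X$,
$$\langle DV_\epsilon(x),h\rangle_X = \langle D_{-1}\tilde V_\epsilon(x),h\rangle_{-1}=\langle BD_{-1}\tilde V_\epsilon(x),h\rangle_X,$$
so $DV_\epsilon(x)=BD_{-1}\tilde V_\epsilon(x)\in \mathrm{Range}(B)\subset \mathrm{Range}(B^{1/2})=D(\tilde A^*)$ by \eqref{eq:R(B^1/2)=D(A^*)}. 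Lipschitzness of $DV_\epsilon$ in the $X$-norm comes from the dual bound $|Bz|_X\le|\tilde A^{-1}|_{\mathcal L(X)}|z|_{-1}$ (Cauchy--Schwarz in the $|\cdot|_{-1}$ pairing) together with $|z|_{-1}\le |\tilde A^{-1}|_{\mathcal L(X)}|z|_X$, giving $V_\epsilon\in C^{1,1}(X)$. For fixed $x_1\in L^2$ the restriction $x_0\mapsto V_\epsilon(x_0,x_1)$ is then $C^{1,1}(\mathbb R^n)$, so Rademacher's theorem applied to its Lipschitz gradient (equivalently Alexandrov's theorem) yields the a.e.\ existence of $D^2_{x_0}V_\epsilon$.

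\emph{Part (iii).} Uniform convergence $V_\epsilon\to V$ is the standard Moreau--Yosida estimate: optimizing $Ks-s^2/(2\epsilon)$ over $s=|x-y|_{-1}\ge 0$ gives $0\le V(x)-V_\epsilon(x)\le K^2\epsilon/2$ uniformly in $x\in X$. For the convergence of $D_{x_0}V_\epsilon$ on $B_R$, I would fix $\bar x_1\in B_R^1$ and use that $V_\epsilon^{\bar x_1}$ is semiconvex on $\mathbb R^n$ (Euclidean norm) with a constant independent of $\epsilon$ and $\bar x_1$ (since $|\cdot|_{-1}$ is equivalent to $|\cdot|$ on $\mathbb R^n\times\{0\}$), while $V^{\bar x_1}$ has $C^{1,\alpha}(B_R^0)$-norm bounded uniformly in $\bar x_1\in B_R^1$ by Theorem \ref{th:C1alpha}. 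For a unit vector $e\in\mathbb R^n$ and $r>0$ the semiconvex lower bound for $V_\epsilon^{\bar x_1}$ and the Taylor upper bound for $V^{\bar x_1}$ give
$$V_\epsilon^{\bar x_1}(x_0+re)-V_\epsilon^{\bar x_1}(x_0)\ge D_{x_0}V_\epsilon(x_0,\bar x_1)\cdot e\,r-\tilde C r^2,\quad V^{\bar x_1}(x_0+re)-V^{\bar x_1}(x_0)\le D_{x_0}V(x_0,\bar x_1)\cdot e\,r+\tilde C r^{1+\alpha};$$
subtracting, using $\|V_\epsilon-V\|_\infty\le K^2\epsilon/2$, and dividing by $r$ yields
$$[D_{x_0}V_\epsilon(x_0,\bar x_1)-D_{x_0}V(x_0,\bar x_1)]\cdot e\le \tilde C r+\tilde C r^\alpha+K^2\epsilon/r.$$
Optimizing, e.g.\ $r=\epsilon^{1/(1+\alpha)}$, and repeating with the increment $-re$ to bound the opposite direction, gives the required uniform convergence on $B_R^0\times B_R^1$, and hence on $B_R$.
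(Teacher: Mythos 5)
Your proposal is correct and follows essentially the same route as the paper, which for parts (i) and (ii) simply cites \cite{lasry} and states the gradient identity via duality; your computations fill in exactly those details (the parallelogram--Young absorption of the cross term for $\epsilon\le 1/(4C)$, the duality pairing $\langle\cdot,\cdot\rangle_{-1}=\langle B\cdot,\cdot\rangle_X$, and Alexandrov/Rademacher for the a.e.\ second derivative in $x_0$). For part (iii) the paper argues by contradiction, first declaring ``we will assume without loss of generality that $V$ and $V_\epsilon$ are convex'' and then pitting the convexity lower bound for $V_\epsilon$ against the $C^{1,\alpha}$ upper bound for $V$; your version uses the same two ingredients plus $\|V-V_\epsilon\|_\infty\le K^2\epsilon/2$, but works directly with the uniform-in-$\epsilon$ semiconvexity from (i) and optimizes $r=\epsilon^{1/(1+\alpha)}$ to get a quantitative rate $O(\epsilon^{\alpha/(1+\alpha)})$. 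This is arguably cleaner: it sidesteps the ``WLOG convex'' reduction, which is a slight sleight of hand since adding $C|x|^2_{-1}$ to $V$ changes the inf-convolution $V_\epsilon$ in a nontrivial way, and it yields an explicit convergence rate rather than just the qualitative statement. One small imprecision: in (ii) you write $BD_{-1}\tilde V_\epsilon(x)\in\mathrm{Range}(B)$, but $D_{-1}\tilde V_\epsilon(x)$ lives only in $X_{-1}$, not in $X$; the correct factorization, as in the paper, is $BD_{-1}\tilde V_\epsilon(x)=B^{1/2}\bigl(B^{1/2}D_{-1}\tilde V_\epsilon(x)\bigr)$ with the inner $B^{1/2}$ viewed as the isometry $X_{-1}\to X$, which lands the whole expression in $\mathrm{Range}(B^{1/2})=D(\tilde A^*)$. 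This does not affect the conclusion.
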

\begin{proof}$ $
\begin{enumerate}[(i)]
\item This is a standard result, e.g. see \cite{lasry}. 
\item It follows from \cite{lasry} that $\tilde V_\epsilon \in C^{1,1}(X_{-1})$. It is then easy to see  that $ V_\epsilon \in C^{1,1}(X)$ with
\[
DV_\epsilon(x)=B D_{-1} \tilde V_\epsilon(x)
\]
 for every $x \in X$. Since $D_{-1} \tilde V_\epsilon(\bar x)  \in X_{-1}$, $B^{1/2}D_{-1} \tilde V_\epsilon(\bar x) \in X$ so that 
 $D V_\epsilon( x)=BD_{-1} \tilde V_\epsilon(  x)=B^{1/2} B^{1/2}D_{-1} \tilde V_\epsilon( x)\in R(B^{1/2})= D(\tilde A^*)$, where the last equality follows by \eqref{eq:R(B^1/2)=D(A^*)}.
 Finally note that by Alexandrov's theorem, if $\bar x_1 \in L^2([-d,0],\mathbb{R}^n)$ is fixed, $D^2_{x_0}V_\epsilon(x_0,\bar x_1)$ exists for a.e. $x_0 \in \mathbb{R}^n$.
 \item The uniform convergence of $V_\epsilon$ follows by standard theory since $V$ is Lipschitz with respect to the $| \cdot|_{-1}$ norm. 
 %We prove the convergence of $D_{x_0}V_\epsilon.$  
 %With a similar proof of \cite[remark prior to (D.8) p. 862]{fgs_book}, since $w_\epsilon \in \mathcal C^{1,1}(X)$, for every $x \in X$ we have 
%\begin{equation}\label{eq:proof_lemma_convergence_convolution_Dw_eps}
%Dw_\epsilon(x)=\frac{1}\epsilon B(x- y^\epsilon)
%\end{equation}
 % where $y^\epsilon=y^\epsilon(x) \in X$ is the unique point such that the infimum in the definition of $w_\epsilon$ is realized, i.e. 
 %$w_\epsilon(x)=w( y^\epsilon)+\frac{1}{2\epsilon}|x- y^\epsilon|^2_{-1}$.
 
%Recall that by Remark \ref{rem:regularity_for_any_viscosity_sol} $D_{x_0} w$ exists and it is continuous. Then, since $y^\epsilon$ is a minimizer
% of $y \mapsto w( y)+\frac{1}{2\epsilon}|x- y|^2_{-1}$, by differentiating this with respect to $x_0$ and equating to 0 at $y=y^\epsilon$, we have:
%$$D_{x_0}w(y^\epsilon)=\frac{1}\epsilon (B(x- y^\epsilon))_0=D_{x_0}w_\epsilon(x),$$
%where the last equality follows from \eqref{eq:proof_lemma_convergence_convolution_Dw_eps}. Note now that $y^\epsilon =y^\epsilon(x)
%  \xrightarrow{\epsilon \to 0} x$ for every $x \in X$. Hence, taking into account also the continuity of $D_{x_0} w$, we have:
%$$|D_{x_0}w_\epsilon(x)-D_{x_0}w(x) |=|D_{x_0}w(y^\epsilon)-D_{x_0}w(x)| \xrightarrow{\epsilon \to 0} 0 \quad \forall x \in X.$$

We now show that $D_{x_0}V_\epsilon(x) \xrightarrow{\epsilon \to 0} D_{x_0}V(x)$ uniformly on $B_R \subset X$ for every $R>0.$ We will assume without loss of generality that $V$ and $V_\epsilon$ are convex.
Fix $R>0$ and by contradiction assume that $D_{x_0}V_\epsilon(x)$ does not converge uniformly to $D_{x_0}V(x)$ on $B_R$. Then there exist $c>0$, $\{(x_0^\epsilon,x_1^\epsilon)\} \subset  B_R$, such that if we set $p_0^\epsilon :=D_{x_0}V_\epsilon(x_0^\epsilon,x_1^\epsilon)$, $\bar p_0^\epsilon :=D_{x_0}V(x_0^\epsilon,x_1^\epsilon)$, we have $|p_0^\epsilon-\bar p_0^\epsilon|\geq c.$ Let $q_0^\epsilon$ be such that $|q_0^\epsilon|=1$ and $|p_0^\epsilon-\bar p_0^\epsilon|=(p_0^\epsilon-\bar p_0^\epsilon)\cdot q_0^\epsilon$. Denote $a_\epsilon:=|V_\epsilon-V|_{L^\infty(B_{R+1})}$ and let $0 \leq t \leq 1$. Since $V_\epsilon$ is convex, we have
\[
V(x_0^\epsilon+t q_0^\epsilon,x_1^\epsilon) \geq V_\epsilon(x_0^\epsilon+t q_0^\epsilon,x_1^\epsilon)  - a_\epsilon 
\geq 
 V_\epsilon(x_0^\epsilon,x_1^\epsilon)  + t p_0^\epsilon \cdot q_0^\epsilon   -a_\epsilon \geq V(x_0^\epsilon,x_1^\epsilon)  
 + t p_0^\epsilon \cdot q_0^\epsilon   - 2 a_\epsilon.
\]
On the other hand, as $V^{\bar x_1} \in C^{1,\alpha}_{\rm loc}(\mathbb R^n)$ and has locally uniform $C^{1,\alpha}$-norm for  $\bar x_1 \in L^2,$  $|\bar x_1| \leq R$, there exists $C=C_R>0$ such that
$$V(x_0^\epsilon+t q_0^\epsilon,x_1^\epsilon) \leq V(x_0^\epsilon,x_1^\epsilon) + t \bar p_0^\epsilon \cdot q_0^\epsilon+ Ct^{1+\alpha}. $$
Putting together these two inequalities  we obtain
\[
ct \leq t  (p_0^\epsilon-\bar p_0^\epsilon) \cdot q_0^\epsilon \leq Ct^{1+\alpha} +2a_\epsilon.
\]
Letting $\epsilon \to 0$ (note that $a_\epsilon \to 0$ since $V_\epsilon \to V$ uniformly), we thus have
$$ ct\leq Ct^{1+\alpha}$$
which is impossible for small $t$ as $\alpha>0$.
\end{enumerate}
\end{proof}
 
We now prove the main result of this section.
\begin{prop}\label{prop:inf_conv_subsolution_perturbed}
Let Assumptions \ref{hp:state}, \ref{hp:cost}, \ref{hp:discount} and \ref{hp:convexity} hold.
There exists $\gamma(\epsilon) \geq 0$, $\gamma(\epsilon) \to 0$ as $\epsilon \to 0$ such that $V_\epsilon$ is a viscosity supersolution of 
\begin{equation}\label{pertHJB-gamma-eps}
\rho V_\epsilon(x)-\langle  \tilde A^*DV_\epsilon(x),x \rangle_X+\tilde H \left (x,D_{x_0}V_\epsilon(x) ,D^2_{x_0^2}V_\epsilon(x) \right) =-\gamma(\epsilon), \quad x \in X.
\end{equation}
In fact the viscosity supersolution property of $V_\epsilon$ holds in the following stronger sense: if $x \in X$ is a local minimum of $V_\epsilon + \phi + g$ for test functions $\phi \in \Phi, g \in \mathcal G$, then $Dg(x)\in D(\tilde A^*)$ and
\begin{align*}
 \rho V_\epsilon(x)& +  \langle x, \tilde A^*(D \phi ( x) + D g(\bar x)) \rangle_X + H \left ( x, -D \phi (x) - D g(x), -D^2 \phi (x) - D^2 g(x) \right) \geq  -\gamma(\epsilon).
 \end{align*}
\end{prop}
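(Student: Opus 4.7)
My plan is to prove the proposition in two independent steps. For the first claim, that $Dg(\bar x) \in D(\tilde A^*)$, the argument is immediate: by Lemma \ref{lem:c11}, $V_\epsilon \in C^{1,1}(X)$ with $DV_\epsilon(\bar x) \in D(\tilde A^*)$, and since $V_\epsilon,\phi,g$ are all $C^1$ near a local minimum of $V_\epsilon+\phi+g$ at $\bar x$, the first-order condition $Dg(\bar x)=-DV_\epsilon(\bar x)-D\phi(\bar x)$ places $Dg(\bar x)$ in $D(\tilde A^*)$ because both terms on the right are (the second by $\phi\in\Phi$).

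For the perturbed supersolution inequality I would employ the standard inf-convolution shift argument. Let $y_\epsilon\in X_{-1}$ denote the unique minimizer of $y\mapsto \tilde V(y)+\frac{1}{2\epsilon}|\bar x-y|_{-1}^2$, which exists for $\epsilon$ small enough thanks to Assumption \ref{hp:convexity}. From the $|\cdot|_{-1}$-Lipschitz estimate (Proposition \ref{prop:V_continuous_norm_-1}) I obtain $|\bar x-y_\epsilon|_{-1}\leq K\epsilon$, and Lemma \ref{lem:c11}(ii) identifies $DV_\epsilon(\bar x)=B\,D_{-1}\tilde V_\epsilon(\bar x)=\frac{1}{\epsilon}B(\bar x-y_\epsilon)$. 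Under the change of variables $x=y+(\bar x-y_\epsilon)$, the shifted function $y\mapsto V(y)+\phi(y+\bar x-y_\epsilon)+g(y+\bar x-y_\epsilon)$ has a local minimum at $y_\epsilon$. Applying the supersolution property of $V$ from Theorem \ref{th:existence_uniqueness_viscosity_infinite} and subtracting from the target inequality at $\bar x$, the residual splits into three pieces that I estimate separately: the value difference $\rho(V_\epsilon(\bar x)-V(y_\epsilon))=\frac{\rho}{2\epsilon}|\bar x-y_\epsilon|_{-1}^2=O(\epsilon)$, which is nonnegative; the linear term $\langle \bar x-y_\epsilon, \tilde A^*(D\phi+Dg)(\bar x)\rangle_X$, which by the gradient identity above and $\tilde A^* B=\tilde A^{-1}$ equals $-\frac{1}{\epsilon}\langle \bar x-y_\epsilon,\tilde A^{-1}(\bar x-y_\epsilon)\rangle_X\geq 0$ by the dissipativity of $\tilde A$; and the Hamiltonian difference $H(\bar x,\cdot)-H(y_\epsilon,\cdot)$, controlled by the $|\cdot|_{-1}$-Lipschitz bounds of Lemma \ref{lemma:regularity_coefficients_theory}. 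Together these yield an error $\gamma(\epsilon)=O(\epsilon)\to 0$, and the two positively signed contributions above in fact furnish a sharper sign that absorbs potential negative Hamiltonian error.

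The main obstacle lies in making the shift argument rigorous, since $y_\epsilon$ a priori belongs to $X_{-1}\setminus X$ and the shifted radial function $g(\cdot+\bar x-y_\epsilon)$ is not an element of $\mathcal G$. I would resolve these points by approximating $y_\epsilon$ in $|\cdot|_{-1}$ by a sequence $y_n\in X$ (which exists by density of $X$ in $X_{-1}$), exploiting the fact that $\phi(\cdot+c)\in\Phi$ for every $c\in X$ by translation-invariance of the defining regularity of $\Phi$, and absorbing the non-radial piece of the shifted $g$ into the $\Phi$-test function via its second-order Taylor expansion around $\bar x$ (the remainder vanishing to second order at $y_\epsilon$). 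Whenever exact local minima need to be recovered from near-minima produced by the approximation, I would invoke the Borwein--Preiss smooth variational principle with an $|\cdot|_{-1}^2$ perturbation, which remains in $\Phi$ precisely because $\tilde A^* B=\tilde A^{-1}\in\mathcal{L}(X)$. Passing to the limit $n\to\infty$ and collecting all error estimates then gives the claimed $\gamma(\epsilon)\to 0$.
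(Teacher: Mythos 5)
Your observation that $Dg(x)\in D(\tilde A^*)$ follows immediately from the first-order condition is correct and in fact more direct than the paper's argument: since $V_\epsilon\in C^{1,1}(X)$ with $DV_\epsilon(x)\in D(\tilde A^*)$ by \eqref{eq:gradient_w_gradient_w_-1}, and $\phi\in\Phi$ forces $D\phi(x)\in D(\tilde A^*)$, the identity $Dg(x)=-DV_\epsilon(x)-D\phi(x)$ at a local minimum gives the claim. The error estimates you outline for the three residual terms are also the right ones.

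The central step, however, has a genuine gap. You plan to shift coordinates so that $y\mapsto V(y)+\phi(y+\bar x-y_\epsilon)+g(y+\bar x-y_\epsilon)$ has a local minimum at $y_\epsilon$ and then invoke the supersolution property of $V$ there. But Definition \ref{def:viscosity_solution} admits \emph{only} test functions of the form $\phi+g$ with $\phi\in\Phi$, $g\in\mathcal G$, and translates of a radial function are not in $\mathcal G$, whose elements are $g(y)=g_0(|y|_X)$ with the centre fixed at the origin. Your proposed fix --- a second-order Taylor expansion of the shifted $g$ around $y_\epsilon$ --- does not rescue this: the quadratic piece of that expansion contains a term proportional to $|y-y_\epsilon|_X^2$, which is again not in $\mathcal G$ (wrong centre) and is not in $\Phi$ either, because $\tilde A^*D$ applied to it yields $2\tilde A^*(y-y_\epsilon)$, which is not even everywhere defined, let alone uniformly continuous as Definition \ref{def:test_functions}(i) requires. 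Borwein--Preiss does not help here; the obstruction is the admissible class of test functions, not the existence of near-minimisers.

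The paper closes precisely this gap by never testing $V$ against a translated test function. It doubles variables, maximises $-V(y)-\frac{1}{2\epsilon}|x-y|_{-1}^2-\delta|y|_X^2-\phi(x)-g(x)$ over $X\times X$ (the $\delta$-penalty ensures the $y$-maximiser lies in $X$ rather than only in $X_{-1}$, which also addresses your $y_\epsilon\notin X$ worry), and then invokes the infinite-dimensional Crandall--Ishii maximum principle from \cite[Corollary 3.28]{fgs_book} after splitting $|x-y|_{-1}^2=|P_N(x-y)|_{-1}^2+|Q_N(x-y)|_{-1}^2$. That theorem produces genuine $\Phi$-test functions at approximating points, together with the matrix inequality \eqref{eq:maximum_principle_2_derivatives}, and the $\mathcal G$-piece is the admissible $\delta|y|_X^2$ centred at $0$. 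Once the supersolution inequality for $V$ is legitimately in hand, the estimates you describe --- the $O(\epsilon)$ value gap, the favourable sign from $\langle\tilde A^*Bz,z\rangle_X\le 0$, and the $O(\epsilon)$ Hamiltonian error via \eqref{eq:H_norm_-1} --- do carry the proof through. To salvage the translation argument as written you would need an intermediate lemma showing that $B$-continuous supersolutions satisfy the inequality against a larger, translation-closed class of test functions; without it, the shift step does not apply.
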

\begin{proof}
 \textbf{Step 1.} 
 
We fix $\epsilon>0$. Let $x_0 \in X$ be a local minimum of $V_\epsilon + \phi + g$. Notice that here we are not using the notation $x=(x_0,x_1)\in X$ as in the rest of the paper. We remark that $\phi, g$ may depend on $\epsilon$ and $x_0$.  Assume without loss of generality that the minimum is global and strict and $V_\epsilon+\phi+g \to \infty$ as $|x|\to \infty$ (see \cite{fgs_book}, Lemma 3.37). We can also assume that 
\begin{align}\label{eq:w_eps_+phi+g(x_0)=0}
V_\epsilon(x_0)+\phi(x_0)+g(x_0)=0
\end{align}
 and $\phi$ is bounded. 
For every $\delta>0$, we can then find $R>0$ such that for every $x,y \in X$ with $|x|_X,|y|_X \geq R-1$,
\begin{align}\label{eq:proof_inf_convolution_approx_bound}
-V(y)-\frac{1}{2 \epsilon}|x-y|_{-1}^2-\delta |y|_X^2-\phi(x)-g(x)  \leq -V(x_0)-\delta |x_0|^2-\phi(x_0)-g(x_0)-1.
 \end{align}
\textbf{Step 2.} We introduce the function 
$$\bar \Phi \colon X \times X \to \mathbb R, \quad \bar\Phi(x,y) := -V(y)-\frac{1}{2 \epsilon}|x-y|_{-1}^2-\delta |y|_X^2-\phi(x)-g(x).
$$
We prove that $\bar \Phi$ achieves a maximum $\bar x, \bar y$ over $X \times X$ and we show the following limit properties of $\bar x, \bar y$:
\begin{align}
&\lim_{\delta \to 0}\bar x=x_0, \label{eq:bar_x_=_x_0}\\
& \lim_{\delta \to 0}\delta |\bar y|_X^2 =0,  \label{eq:delta_bary_squared_to_zero}\\
&\limsup_{\delta \to 0} |\bar x-\bar y|_{-1}\leq  2K \epsilon. \label{eq:proof_barx-bary_1}
\end{align}

We observe that $\bar \Phi$ is weakly sequentially upper semicontinuous on $X\times X$. Indeed, since $V$ is Lipschitz continuous with respect to the $|\cdot|_{-1}$ norm and $B$ is compact, it is weakly sequentially continuous and the same holds for $|\cdot|_{-1}^2$. Then the weak sequential upper semicontinuity of $-\bar \Phi$ follows as $\phi \in \Phi$ is a regular test function and $g \in \mathcal G$ is a radial test function which is weakly sequentially lower semicontinuous.

Therefore, we can find $\bar x, \bar y \in B_R$ such that
\begin{align*}
\bar \Phi(\bar x, \bar y)=\sup_{|x|_X,|y|_X \leq R} \bar \Phi(x,y)=\sup_{x,y \in X} \bar \Phi(x,y),
\end{align*} 
where the last equality follows by \eqref{eq:proof_inf_convolution_approx_bound}.
We now set
$$
\begin{gathered}
m_{\epsilon, \delta}=\sup_{x,y \in X}  \left\{-V(y)-\frac{1}{2 \epsilon}|x-y|_{-1}^2-\delta |y|_X^2-\phi(x)-g(x)\right\} =\bar \Phi(\bar x,\bar y) 
\end{gathered}
$$
and note that 
\begin{align}\label{eq:lim_phi(bar_x,bar_y)}
\lim_{\delta \to 0} m_{\epsilon, \delta}& = \lim_{R \to \infty}  \sup_{x \in X}\sup_{y \in B_R}  \left\{-V(y)-\frac{1}{2 \epsilon}|x-y|_{-1}^2-\phi(x)-g(x) \right\} \nonumber \\
& = \sup_{x \in X}  \left\{-V_\epsilon(x)-\phi(x)-g(x) \right\} = -V_\epsilon(x_0)-\phi(x_0)-g(x_0)=0.
\end{align}
Moreover by the definition of $V_\epsilon$ we have 
\begin{align*}
m_{\epsilon, \delta} & =\bar \Phi(\bar x, \bar y)  \leq -V(\bar y)-\frac{1}{2 \epsilon}|\bar x- \bar y|_{-1}^2-\phi(\bar x)-g(\bar x)\leq -V_\epsilon(\bar x)-\phi(\bar x)-g(\bar x).
\end{align*}
 Letting $\delta \to 0$ and using \eqref{eq:lim_phi(bar_x,bar_y)} we have
$$0=-V_\epsilon(x_0)-\phi(x_0)-g(x_0) \leq \lim_{\delta \to 0} -V_\epsilon(\bar x)-\phi(\bar x)-g(\bar x),$$
so that as $x_0$ is a strict maximum for $-V_\epsilon-\phi-g$, we must have \eqref{eq:bar_x_=_x_0}.

Moreover, since $m_{\epsilon, \delta}=-V(\bar y)-\frac{1}{2 \epsilon}|\bar x-\bar y|_{-1}^2-\delta |\bar y|_X^2-\phi(\bar x)-g(\bar x)$, so that $m_{\epsilon, \delta}+ \frac{\delta }{2} |\bar y|_X^2=-V(\bar y)-\frac{1}{2 \epsilon}|\bar x-\bar y|_{-1}^2-\frac{\delta}{2} |\bar y|_X^2-\phi(\bar x)-g(\bar x) \leq m_{\epsilon, \delta/2}$, letting $\delta \to 0$ we obtain \eqref{eq:delta_bary_squared_to_zero}.

By \eqref{eq:lim_phi(bar_x,bar_y)} we have
\begin{align*}
V(\bar y)+\frac{1}{2 \epsilon}|\bar x-\bar y|_{-1}^2+\delta |\bar y|_X^2+\phi(\bar x)+g(\bar x) = - \Phi(\bar x, \bar y) \leq \omega (\delta, \epsilon),
\end{align*}
for some local modulus $\omega(\cdot,\epsilon)$ (depending on $\phi, g, x_0$).
Since $V_\epsilon+\phi+g$ has a minimum at $x_0$ then $0 \leq V_\epsilon+\phi+g$ so that $-V(\bar x) \leq -V_\epsilon(\bar x) \leq \phi(\bar x)+ g(\bar x)$. Then, inserting this inequality in the previous one and since $\delta>0$, we have 
$$V(\bar y)-V(\bar x)+\frac{1}{2 \epsilon}|\bar x-\bar y|_{-1}^2 \leq  \omega (\delta, \epsilon)$$
so that, by \eqref{Va-1},
$$-K|\bar x-\bar y|_{-1}+\frac{1}{2 \epsilon}|\bar x-\bar y|_{-1}^2 \leq  \omega (\delta, \epsilon)$$
and thus 
$$\limsup_{\delta \to 0} \left [-K|\bar x-\bar y|_{-1}+\frac{1}{2 \epsilon}|\bar x-\bar y|_{-1}^2 \right] \leq  0$$
for every $\epsilon>0$.
Therefore, for every $\epsilon>0$, we have \eqref{eq:proof_barx-bary_1}. 

\textbf{Step 3.} In this step, using the fact that $V$ is a viscosity  supersolution of the HJB equation, we prove that
\begin{align}\label{eq:proof_supersolution_ineq}
0 \geq -\rho V(\bar y)+\frac{1}{\epsilon} \langle \bar y,\tilde A^* B(\bar x- \bar y)  \rangle_X - H \left (\bar y,\frac{1}{\epsilon}B(\bar x- \bar y)-2 \delta \bar y,Y_N - \frac{2}{\epsilon}BQ_N-2 \delta I \right),
\end{align}
\begin{align}\label{eq:proof_gradient_equal_zero}
-D \phi (\bar x)  -Dg( \bar x) =\frac{1}{\epsilon}B(\bar x - \bar y), \quad \quad 
-D^2 \phi (\bar x) -D^2g(\bar x)\leq \frac{2}{\epsilon}BQ_N + X_N,
\end{align}
and 
\begin{align}\label{eq:gradient_g}
Dg( \bar x)=\frac{h_0'(|\bar x|_X)} {|\bar x|_X }\bar x  \in D(\tilde A^*), \quad \textit{in particular}\,\,\,\bar x \in D(\tilde A^*).
\end{align}
Here, $X_N, Y_N \in \mathcal S(X)$ are such that $X_N=P_N X_N P_N , Y_N=P_N Y_N P_N$  and they satisfy \eqref{eq:maximum_principle_2_derivatives} below.

Define
\begin{align*}
& u_1(x)=-\phi(x)-g(x)-\frac{\left\langle B Q_{N}(\bar{x}-\bar{y}), x\right\rangle_X}{\epsilon}-\frac{\left|Q_{N}(x-\bar{x})\right|_{-1}^{2}}{\epsilon} +\frac{\left|Q_{N}(\bar{x}-\bar{y})\right|_{-1}^{2}}{2 \epsilon} \\
& v_1(y)=V(y)+\delta|y|_X^2-\frac{\left\langle B Q_{N}(\bar{x}-\bar{y}), y\right\rangle_X}{\epsilon}+\frac{\left|Q_{N}(y-\bar{y})\right|_{-1}^{2}}{\epsilon}
\end{align*}
and notice that 
$$
|x-y|_{-1}^{2}=\left|P_{N}(x-y)\right|_{-1}^{2}+\left|Q_{N}(x-y)\right|_{-1}^{2}
$$
and
$$
\begin{aligned}
\left|Q_{N}(x-y)\right|_{-1}^{2} \leq 2\left\langle B Q_{N}(\bar{x}-\bar{y}), x-y\right\rangle_X &+2\left|Q_{N}(x-\bar{x})\right|_{-1}^{2} +2\left|Q_{N}(y-\bar{y})\right|_{-1}^{2}-\left|Q_{N}(\bar{x}-\bar{y})\right|_{-1}^{2}
\end{aligned}
$$
with equality at $\bar x, \bar y$. Thus, since $(\bar x, \bar y)$ is a global maximum for $\bar \Phi$,
$$u_{1}(x)-v_{1}(y)-\frac{1}{2 \epsilon}\left|P_{N}(x-y)\right|_{-1}^{2}$$
has a strict global maximum over $X \times X$ at $(\bar x,\bar y)$.\\

Denote $\bar x_N=P_N \bar x, \bar y_N=P_N \bar y$. By \cite[Corollary 3.28]{fgs_book} there exist test functions $\{ \varphi_{k}\},\{ \psi_{k} \}_k \subset \Phi$ and points $\{(x_{k}, y_{k})\}_k \subset X \times X$ such that $u_{1}(x)-\varphi_{k}(x)$ has a maximum at $x_{k}$, $v_{1}(y)-\psi_{k}(y)$ has a minimum at $y_{k}$ and such that 
$$\left(x_{k}, D \varphi_{k}\left(x_{k}\right), D^{2} \varphi_{k}\left(x_{k}\right)\right) \stackrel{k \rightarrow \infty}{\longrightarrow}\left(\bar{x},  \frac{B\left(\bar{x}_{N}-\bar{y}_{N}\right)}{\epsilon}, X_{N}\right)$$
in $X \times X_{2} \times \mathcal{L}\left(X_{-1}, X_{1}\right)$
$$\left(y_{k},  D \psi_{k}\left(y_{k}\right), D^{2} \psi_{k}\left(y_{k}\right)\right) \stackrel{k \rightarrow \infty}{\longrightarrow}\left(\bar{y}, \frac{B\left(\bar{x}_{N}-\bar{y}_{N}\right)}{\epsilon}, Y_{N}\right)$$
in $X \times X_{2} \times \mathcal{L}\left(X_{-1}, X_{1}\right)$,
for some $X_N ,Y_N \in \mathcal S(X)$ satisfying  $X_N=P_N X_N P_N, Y_N=P_N Y_N P_N$ and 
\begin{align}\label{eq:maximum_principle_2_derivatives}
\frac{3}{\epsilon}\left(\begin{array}{cc}B P_{N} & 0 \\ 0 & B P_{N}\end{array}\right) \leq\left(\begin{array}{cc}X_N & 0 \\ 0 & -Y_N\end{array}\right) \leq \frac{3}{\epsilon}\left(\begin{array}{cc}B P_{N} & -B P_{N} \\ -B P_{N} & B P_{N}\end{array}\right).
\end{align}
Since $v_{1}(y)-\psi_{k}(y)$ has a minimum at $y_{k}$, by defining
\begin{align*}
&\bar \phi(y)=-\psi_k(y)-\frac{\left\langle B Q_{N}(\bar{x}-\bar{y}), y\right\rangle_X}{\epsilon}+\frac{\left|Q_{N}(y-\bar{y})\right|_{-1}^{2}}{\epsilon}, \quad \quad  \bar g(y)=\delta |y|_X^2,
\end{align*}
$V+\bar \phi + \bar g$ has a minimum at $y_k$. Since $V$ is a viscosity supersolution of the HJB equation, we have
\begin{align*}
\rho V(y_k)+\langle y_k, \tilde A^*D \bar \phi (y_k) \rangle_X + H(y_k,-D \bar \phi (y_k)-D \bar g (y_k),-D^2 \bar \phi (y_k)-D^2 \bar g (y_k)) \geq 0.
\end{align*}
Note that $D \left|Q_{N}(y-\bar{y})\right|_{-1}^{2}=2B Q_N (y-\bar{y})$. Therefore,
\begin{align*}
&D \bar \phi(y_k) =-D \psi_k(y_k)-\frac{1}{\epsilon}B Q_N(\bar x- \bar y)+ 2BQ_N (y_k-\bar y)\\
& \quad \xrightarrow[]{k \to \infty}  -\frac{1}{\epsilon}B(\bar x_N-\bar y_N)-\frac{1}{\epsilon}B Q_N(\bar x- \bar y)
= -\frac{1}{\epsilon}BP_N(\bar x -\bar y)-\frac{1}{\epsilon}B Q_N(\bar x- \bar y)=  -\frac{1}{\epsilon}B(\bar x-\bar y)\\
\end{align*}
and
$D^2 \bar \phi(y_k) =-D^2 \psi_k(y_k)+\frac{2}{\epsilon}BQ_N \xrightarrow[]{k \to \infty}  -Y_N + \frac{2}{\epsilon} BQ_N,$
so that letting $k \to \infty$ we have \eqref{eq:proof_supersolution_ineq}. 

Since $u_{1}(x)-\varphi_{k}(x)$ has a maximum at $x_k$, we have 
\begin{align*}
D(u_{1}(x_k)-\varphi_{k}(x_k))& =-D \phi (x_k) -Dg(x_k) -\frac{1}{\epsilon}BQ_N(\bar x - \bar y)-2 BQ_N(x_k-\bar x) -D\varphi_k(x_k) = 0
\end{align*}
and
\begin{align*}
D^2(u_{1}(x_k)-\varphi_{k}(x_k))& =-D^2 \phi (x_k) -D^2g(x_k)-\frac{2}{\epsilon}BQ_N-D^2\varphi_k(x_k) \leq 0,
\end{align*}
so that, letting $k \to \infty$ above, we obtain \eqref{eq:proof_gradient_equal_zero}.
%\begin{align*}
%-D \phi (\bar x)  -Dg( \bar x) -\frac{1}{\epsilon}BQ_N(\bar x - \bar y)-\frac{1}{\epsilon} B (\bar x_N-\bar y_N)=0
%\end{align*}
%i.e. as $\bar x_N=P_N \bar x, \bar y_N=P_N \bar y$
We now note that \eqref{eq:proof_gradient_equal_zero} and \eqref{eq:gradient_radial} imply \eqref{eq:gradient_g}.

\textbf{Step 4.} In this last step, we conclude the proof of the proposition.

By \eqref{eq:proof_supersolution_ineq}, \eqref{eq:proof_gradient_equal_zero}, \eqref{eq:gradient_g}, the structure conditions
\eqref{eq:H_decreasing}, \eqref{eq:H_lambda_BQ_N}, \eqref{eq:H_norm_-1}, \eqref{eq:Hamiltonian_local_lip}, the weak $B$-condition with $C_0=0$ and \eqref{Va-1}, we have
\begin{small}
\begin{align*}
 \rho& V_\epsilon(\bar x)+ \langle \bar x, \tilde A^*(D \phi (\bar x) + D g(\bar x)) \rangle_X + H \left ( \bar x, -D \phi (\bar x) - D g(\bar x), -D^2 \phi (\bar x) - D^2 g(\bar x) \right) \\
 & =\rho V_\epsilon(\bar x) -\frac{1}{\epsilon} \langle \bar x, \tilde A^*B(\bar x - \bar y)  \rangle_X + H \left ( \bar x, \frac{1}{\epsilon}  B(\bar x - \bar y), -D^2 \phi (\bar x) - D^2 g(\bar x) \right) \\
&  \geq \rho V_\epsilon(\bar x) -\frac{1}{\epsilon} \langle \bar x, \tilde A^*B(\bar x - \bar y)  \rangle_X + H \left ( \bar x, \frac{1}{\epsilon}  B(\bar x - \bar y), -D^2 \phi (\bar x) - D^2 g(\bar x) \right) \\
& \quad - \rho V(\bar y)+\frac{1}{\epsilon} \langle \bar y,\tilde A^* B(\bar x- \bar y)  \rangle_X - H \left (\bar y,\frac{1}{\epsilon}B(\bar x- \bar y)-2 \delta \bar y,Y_N - \frac{2}{\epsilon}BQ_N-2 \delta I \right) \\
&  \geq \rho V_\epsilon(\bar x) -\frac{1}{\epsilon} \langle \bar x-\bar y, \tilde A^*B(\bar x - \bar y)  \rangle_X + H \left ( \bar x, \frac{1}{\epsilon}  B(\bar x - \bar y),  \frac{2}{\epsilon}BQ_N + X_N \right) \\
&  \quad -\rho V(\bar y)- H \left (\bar y,\frac{1}{\epsilon}B(\bar x- \bar y),Y_N - \frac{2}{\epsilon}BQ_N \right) -\delta C (1+|\bar y|_X^2)\\
&  = \rho V_\epsilon(\bar x) -\frac{1}{\epsilon} \langle \bar x-\bar y, \tilde A^*B(\bar x - \bar y)  \rangle_X + H \left ( \bar x, \frac{1}{\epsilon}  B(\bar x - \bar y),  X_N \right) \\
&  \quad -\rho V(\bar y)- H \left (\bar y,\frac{1}{\epsilon}B(\bar x- \bar y),Y_N \right) -\omega_{\epsilon}(1/N) -\delta C (1+|\bar y|_X^2)\\
& \geq \rho V_\epsilon(\bar x)-\rho V(\bar y) -\frac{1}{\epsilon} \langle \bar x-\bar y, \tilde A^*B(\bar x - \bar y)  \rangle_X  -C\left (|\bar x- \bar y|_{-1}\left(1+\frac{|\bar x- \bar y|_{-1}}{\epsilon}\right) \right)
-\omega_{\epsilon}(1/N) -\delta C  (1+|\bar y|_X^2)\\
& \geq \rho V_\epsilon(\bar x)-\rho V (\bar x) +\rho V (\bar x) -\rho V(\bar y) -C\left (|\bar x- \bar y|_{-1}\left(1+\frac{|\bar x- \bar y|_{-1}}{\epsilon}\right) \right) -\omega_{\epsilon}(1/N)-\delta C  (1+|\bar y|_X^2)\\
& \geq \rho V_\epsilon(\bar x)-\rho V (\bar x) -\rho K|\bar x-\bar y|_{-1}-C\left (|\bar x- \bar y|_{-1}\left(1+\frac{|\bar x- \bar y|_{-1}}{\epsilon}\right) \right)-\omega_{\epsilon}(1/N) -\delta C  (1+|\bar y|_X^2),
\end{align*}
\end{small}
where $\omega_{\epsilon}$ is a modulus coming from \eqref{eq:H_lambda_BQ_N}.
We now let first $N \to \infty$ and then take $\limsup_{\delta \to 0}$, so that by \eqref{eq:bar_x_=_x_0}, \eqref{eq:delta_bary_squared_to_zero}, \eqref{eq:proof_barx-bary_1} and \eqref{eq:convergence_inf_convolution} we have
\begin{align*}
 \rho V_\epsilon(x_0)& +  \langle \bar x, \tilde A^*(D \phi (\bar x) + D g(\bar x)) \rangle_X + H \left ( x_0, -D \phi (x_0) - D g(x_0), -D^2 \phi (x_0) - D^2 g(x_0) \right)\\
 &  \geq \rho V_\epsilon(x_0)-\rho V (x_0)   - C_1 \epsilon  \geq -\nu(\epsilon) - C_1 \epsilon =: -\gamma(\epsilon),
 \end{align*}
where  $\gamma(\epsilon) \to 0$ as $\epsilon \to 0$. We emphasize that the modulus $\nu$ and the constant $C_1$, and hence $\gamma$, are independent of $\phi, g, x_0$. Thus we proved that $V_\epsilon$ is a viscosity solution of \eqref{pertHJB-gamma-eps} in the stronger sense of Proposition \ref{prop:inf_conv_subsolution_perturbed}.
%By \eqref{eq:gradient_g} we have $\langle \bar x, \tilde A^* D g(\bar x) \rangle_X = \frac{ h_0'(|\bar x|_X) }{ |\bar x|_X} \langle \tilde A^* \bar x,  \bar %x \rangle_X \leq 0$ so that we can drop it and get  
%\begin{align*}
% \rho w_\epsilon(x_0)& +  \langle \bar x, \tilde A^*D \phi (\bar x)  \rangle_X + H \left ( x_0, -D \phi (x_0) - D g(x_0), -D^2 \phi (x_0) - D^2 g(x_0) 
%\right)  \geq  -\gamma(\epsilon).
% \end{align*}
 %The last two inequalities imply the thesis of the theorem.
\end{proof}
\section{Non-smooth Dynkin's Formula}\label{sec:dynkin_formula}
In Sections \ref{sec:dynkin_formula} and \ref{sec:lions_approx} we follow the setup and technique introduced in \cite{lions-infdim1} and modify them to accommodate an equation with an unbounded term. 
We define the space $\mathcal D$ by
$$\mathcal D=\Big \{ \phi \in C^{1,1}(X_{-1}) : \eqref{eq:second_derivative_space_D} \textit{ exists and is uniformly continuous on }  X_{-1}\},$$
where  
\begin{equation}\label{eq:second_derivative_space_D}
\lim_{t \to 0} \frac 1 t \langle D_{-1} \phi(x+tk)-D_{-1} \phi(x),h \rangle_{-1} ,\quad \forall x,h,k \in X_{-1}.
\end{equation}
Functions in the space $\mathcal D$ possess second order derivatives in some sense. 
Note that for $\phi \in \mathcal D$, since $D \phi=BD_{-1}\phi$, we have
\begin{equation}\label{eq:second_derivative_space_D_derivatives_in_X}
\lim_{t \to 0} \frac 1 t \langle D \phi(x+tk)-D \phi(x),h \rangle_X=\lim_{t \to 0} \frac 1 t \langle D_{-1} \phi(x+tk)-D_{-1} \phi(x),h \rangle_{-1}, \quad \forall x,k \in X_{-1}, h \in X
\end{equation}
and it is uniformly continuous with respect to $x \in X_{-1}$.
Moreover, we have
\begin{equation}\label{eq:second_derivative_space_D_derivatives_in_X_-1_bilinear_form}
\lim_{t \to 0} \frac 1 t \langle D_{-1} \phi(x+tk)-D_{-1} \phi(x),h \rangle_{-1} = \langle A(x)k,h \rangle_{-1} ,\quad \forall h,k \in X_{-1}
\end{equation}
and the limit is uniformly continuous with respect to $x \in X_{-1}$. Here $A(x)$ are
bounded, linear, self-adjoint operators on $X_{-1}$ such that $|A(x)|_{\mathcal L(X_{-1})} \leq C_\phi,$ where $C_\phi$ is the Lipschitz constant of $D_{-1} \phi$. We will denote $A(x)=\tilde D_{-1}^2\phi(x)$. We point out that $\tilde D_{-1}^2\phi(x)$ is not the Fr\'echet or the Gateaux derivative. It is a sort of a weak Gateaux second order derivative in $X_{-1}$ in the sense that 
$$ \frac 1 t \left( D_{-1} \phi(x+tk)-D_{-1} \phi(x) \right) \stackrel{X_{-1}}{\rightharpoonup} \tilde D_{-1}^2\phi(x)k, \quad \forall x, k \in X_{-1}, $$
where $\stackrel{X_{-1}}{\rightharpoonup}$ is the weak convergence in $X_{-1}$.
Denoting  $\tilde D^2\phi(x)=B\tilde D_{-1}^2\phi(x)$, by \eqref{eq:second_derivative_space_D_derivatives_in_X} we have
\begin{equation}\label{eq:second_derivative_space_D_derivatives_in_X_bilinear_form}
\lim_{t \to 0} \frac 1 t \langle D \phi(x+tk)-D \phi(x),h \rangle_X= \langle \tilde D^2\phi(x) k,h \rangle_X ,\quad \forall  k,h \in X
\end{equation}
and the right-hand side of \eqref{eq:second_derivative_space_D_derivatives_in_X_bilinear_form} is uniformly continuous in $x\in X$. Here, 
 $\tilde D^2\phi(x)$ is a bounded, linear, self-adjoint operator on $X$. We have $|\tilde D^2\phi(x)|_{\mathcal L(X)} \leq  C_\phi$ and
\begin{equation*}
\frac 1 t \left(D \phi(x+tk)-D \phi(x) \right) \rightharpoonup \tilde D^2\phi(x)k, \quad \forall x, k \in X.
\end{equation*}
Hence the quantities
\begin{equation}\label{eq:second_derivative_space_D_unif_continuous}
\begin{aligned}
& \langle \tilde D_{-1}^2\phi(x)k,h \rangle_{-1} \quad \forall h,k \in X_{-1},\quad \quad \langle \tilde D^2\phi(x) k,h \rangle_X \quad \forall  k,h \in X
\end{aligned}
\end{equation}
are uniformly continuous with respect to $x \in X_{-1}$ and $x \in X$ respectively.
\begin{remark}\label{rem:equivalent_condition_in_D}
We note that \eqref{eq:second_derivative_space_D} can be replaced by the following condition: $\partial_{ij} \phi$ exists and is uniformly continuous on $X_{-1}$ for every $i,j \in \mathbb N$. Here $\partial_i$ indicates the partial derivative with respect to $e_i$ (in $X_{-1}$), where $\{e_i\}_{i \in \mathbb N}$ is the basis of $X_{-1}$ defined in Subsection \ref{sec:operator_B}.
\end{remark}
We denote $\tilde D^2_{x_0^2}\phi(x):=P_{x_0}\tilde D^2 \phi(x)P_{x_0}$, where $P_{x_0}$ is the orthogonal projection in $X$ onto the $\mathbb R^n$ component.
\begin{lemma}\label{lemma:D^2_x_0^2phi_in_D}
Let $\phi \in \mathcal D$ and consider its restriction to $X$. Then, for every $\bar x_1 \in L^2$, we have $\phi^{\bar x_1}:=\phi(\cdot,\bar x_1) \in C^2(\mathbb R^n)$, hence $D^2_{x_0^2}\phi(x)=\tilde D^2_{x_0^2}\phi(x)$. Moreover $D^2_{x_0^2}\phi(x)$ is uniformly continuous on $X$.
\end{lemma}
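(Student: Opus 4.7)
The plan is to exploit the finite dimensionality of the $x_0$-component: everything in the definition of $\mathcal D$ is a priori only a weak-Gateaux-type statement, but when the directions are confined to $\mathbb R^n$, weak and strong convergence coincide and we can upgrade the statement to genuine $C^2$ regularity. First, since $\phi\in C^{1,1}(X_{-1})$ and the inclusion $X\hookrightarrow X_{-1}$ is continuous, the restriction $\phi^{\bar x_1}(x_0):=\phi(x_0,\bar x_1)$ lies in $C^{1,1}_{\mathrm{loc}}(\mathbb R^n)$, with $D_{x_0}\phi^{\bar x_1}(x_0)=D_{x_0}\phi(x_0,\bar x_1)$ Lipschitz in $x_0$.

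Next, I would apply \eqref{eq:second_derivative_space_D_derivatives_in_X_bilinear_form} with test vectors $k=(k_0,0)$ and $h=(h_0,0)$ in the finite-dimensional slot. Writing $x=(x_0,\bar x_1)$, this gives
\begin{equation*}
\frac{1}{t}\bigl(D_{x_0}\phi(x_0+tk_0,\bar x_1)-D_{x_0}\phi(x_0,\bar x_1)\bigr)\cdot h_0 \xrightarrow[t\to 0]{}\langle \tilde D^2\phi(x)(k_0,0),(h_0,0)\rangle_X=\bigl(\tilde D^2_{x_0^2}\phi(x)k_0\bigr)\cdot h_0.
\end{equation*}
Choosing $h_0$ to range over the canonical basis of $\mathbb R^n$ shows that the difference quotient converges componentwise in $\mathbb R^n$; hence it converges in norm to $\tilde D^2_{x_0^2}\phi(x)k_0$. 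Thus $D_{x_0}\phi^{\bar x_1}$ admits Gateaux derivatives in every direction at every point, and the Gateaux derivative map is precisely $x_0\mapsto \tilde D^2_{x_0^2}\phi(x_0,\bar x_1)$.

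To promote Gateaux to Fr\'echet regularity I would verify that this map is continuous (in operator norm) as a function of $x\in X$. By \eqref{eq:second_derivative_space_D_unif_continuous}, for each fixed pair $k,h\in X$ the scalar function $x\mapsto\langle \tilde D^2\phi(x)k,h\rangle_X$ is uniformly continuous on $X$. Taking $k=(e_i,0)$ and $h=(e_j,0)$, where $\{e_i\}_{i=1}^n$ is the canonical basis of $\mathbb R^n$, shows that each entry of the $n\times n$ matrix $\tilde D^2_{x_0^2}\phi(x)$ is uniformly continuous in $x\in X$; since there are only $n^2$ entries, this is equivalent to uniform continuity of the full matrix in operator norm.

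Finally, a standard result in multivariable calculus states that if $f:\mathbb R^n\to\mathbb R^n$ has Gateaux derivatives everywhere and the derivative map is continuous, then $f\in C^1(\mathbb R^n)$ and the Gateaux derivative coincides with the Fr\'echet derivative. Applying this to $f=D_{x_0}\phi^{\bar x_1}$ gives $\phi^{\bar x_1}\in C^2(\mathbb R^n)$ with $D^2_{x_0^2}\phi^{\bar x_1}(x_0)=\tilde D^2_{x_0^2}\phi(x_0,\bar x_1)$, and the uniform continuity on $X$ has already been established. There is no real obstacle here beyond carefully tracking that the abstract bilinear form from the class $\mathcal D$, when projected onto the finite-dimensional $x_0$-subspace, acquires the classical regularity; the finite dimensionality of $\mathbb R^n$ is what makes the weak-to-strong and Gateaux-to-Fr\'echet upgrades automatic.
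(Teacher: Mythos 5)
Your proof is correct and follows essentially the same route as the paper: test the abstract bilinear form $\langle \tilde D^2\phi(x)k,h\rangle_X$ against vectors supported in the $\mathbb R^n$ slot, read off existence and uniform continuity of the second-order partial derivatives of $\phi^{\bar x_1}$, and invoke the finite-dimensional fact that continuity of second partials gives $C^2$. The Gateaux-to-Fr\'echet phrasing you use is just a slightly more explicit way of stating the same classical upgrade the paper appeals to.
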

\begin{proof}
Let $\{v_i \}$ be an orthonormal basis of $\mathbb R^n$. Fixing any $i,j \leq n$ and considering \eqref{eq:second_derivative_space_D_derivatives_in_X_bilinear_form} with $k=(v_i,0), h=(v_j,0)$ we have that $\frac{\partial^2 }{\partial x_i x_j} \phi^{\bar x_1}$ exists and it is uniformly continuous on $X$. Then, by the fact that in finite dimensional spaces the continuity of all second order partial derivatives implies $C^2$, we have $\phi^{\bar x_1}:=\phi(\cdot,\bar x_1) \in C^2(\mathbb R^n)$ and $D^2_{x_0^2}\phi(x)=\tilde D^2_{x_0^2}\phi(x)$. The uniform continuity in $X$ follows. 
\end{proof}\begin{flushleft}
Thanks to this lemma we will denote
\end{flushleft}
$$\tilde D^2\phi(x)k=\begin{bmatrix}
 D^2_{x_0^2}\phi(x) & \tilde D^2_{x_0 x_1}\phi(x)\\
\tilde D^2_{x_1 x_0}\phi(x) &\tilde D^2_{x_1^2}\phi(x)
\end{bmatrix}\begin{bmatrix}k_0\\
k_1\end{bmatrix}, \quad \forall x=(x_0,x_1), k=(k_0,k_1) \in X.$$
We now prove Dynkin's formula for functions of the form $e^{-\rho t} \phi(x),$ where $\phi \in \mathcal D$. The formula could be extended to more general functions, but we restrict ourselves to functions $e^{-\rho t} \phi(x)$ since only such functions will be used in the proof of the Verification Theorem.

\begin{lemma}[Dynkin's formula]\label{lemma:ito}
Let Assumptions \ref{hp:state}, \ref{hp:discount} hold.
 Fix any initial datum $x \in X$, any control $u(\cdot) \in \mathcal {\overline U}$ and denote by $Y(t)$ the solution of the state equation \eqref{eq:abstract_dissipative_operator}. Let $R,T>0$ and define
 $
\chi^R:=\inf \{s \in[0, T]:|Y(s)|_X>R\}.
$
Then, for every $\phi \in \mathcal D$, for every $0 \leq t \leq T$
\begin{small}
\begin{align*}
 \mathbb E \left[  e^{-\rho (t \wedge \chi^R)}   \phi(Y(t \wedge \chi^R)) \right] 
&=\phi( x)+\mathbb E  \int_0^{t \wedge \chi^R} \Big [- \rho e^{-\rho s} \phi(Y(s)) +   e^{-\rho s}  \langle  Y(s), \tilde A^* D \phi(Y(s))  \rangle_X\\
& \quad +e^{-\rho s } \langle \tilde  b(Y(s),u(s)), D \phi(Y(s))  \rangle_X   
+ \frac 1 2 e^{-\rho s} \tr \left ( \sigma(Y(s))\sigma(Y(s))^* \tilde D^2\phi(Y(s)) \right ) \Big ]   ds\\
&=\phi( x)+\mathbb E  \int_0^{t \wedge \chi^R} \Big [- \rho e^{-\rho s} \phi(Y(s)) +   e^{-\rho s}  \langle  Y(s), \tilde A^* D \phi(Y(s))  \rangle_X\\
& \quad  +e^{-\rho s } \tilde  b_0(Y(s),u(s)) \cdot D_{x_0} \phi(Y(s))
+ \frac 1 2 e^{-\rho s} \tr \left ( \sigma_0(Y(s))\sigma_0(Y(s))^T D^2_{x_0^2}\phi(Y(s)) \right ) \Big ]   ds.
\end{align*}
\end{small}
\end{lemma}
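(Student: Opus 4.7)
There are two obstructions to a direct It\^o formula: $Y$ is only a mild solution of \eqref{eq:abstract_dissipative_operator}, and $\phi \in \mathcal D$ has only the weak Gateaux second derivative $\tilde D^2\phi$, not a Fr\'echet one. The plan is a double approximation --- Yosida regularize the trajectory, smooth $\phi$ --- apply classical It\^o, and pass to the limit. Because $t\wedge\chi^R$ confines $Y$ to $\overline{B_R}$, all integrands stay uniformly bounded and dominated convergence is freely available. The second equality in the statement follows from the first via the structure of $\tilde b$ and $\sigma$ (both having zero $x_1$-component) together with Lemma \ref{lemma:D^2_x_0^2phi_in_D}; hence I focus on the first.

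For $n \geq 1$, set $J_n := n(nI-\tilde A)^{-1} \in \mathcal L(X)$; by Proposition \ref{prop:Amaxdiss}, $|J_n|_{\mathcal L(X)} \leq 1$, $J_n$ commutes with $e^{t\tilde A}$, and $J_n \to I$ strongly on $X$. Let $Y_n(t) := J_n Y(t) \in D(\tilde A)$; from the mild formulation of $Y$, $Y_n$ satisfies the strong integral equation
\begin{align*}
Y_n(t) = J_n x + \int_0^t \tilde A Y_n(s)\,ds + \int_0^t J_n \tilde b(Y(s),u(s))\,ds + \int_0^t J_n \sigma(Y(s))\,dW(s),
\end{align*}
and $Y_n \to Y$ uniformly on $[0,T]$ in $L^2(\Omega;X)$ (and a.s.\ along a subsequence). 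Next, using the orthonormal basis $\{e_i\}_{i\in\mathbb N}$ of $X_{-1}$ from Subsection \ref{sec:operator_B} together with Remark \ref{rem:equivalent_condition_in_D}, I construct $\phi_k \in C^2(X)$ --- for instance by combining the finite-dimensional projection $P_N$ with a smooth mollification on $X^N$ --- such that, uniformly on bounded sets, $\phi_k \to \phi$, $D\phi_k \to D\phi$, and $D^2\phi_k \to \tilde D^2\phi$ in the weak operator sense, under uniform $C^{1,1}$ bounds inherited from $\phi \in C^{1,1}(X_{-1})$. Since $\sigma$ is nonzero only on the $x_0$-block, only $D^2_{x_0^2}\phi_k$ enters the trace term, and $D^2_{x_0^2}\phi_k \to D^2_{x_0^2}\phi$ uniformly on bounded sets by Lemma \ref{lemma:D^2_x_0^2phi_in_D}.

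Applying the classical It\^o formula to $s\mapsto e^{-\rho s}\phi_k(Y_n(s))$ at $s = t\wedge\chi^R$ and taking expectations,
\begin{align*}
\mathbb E\bigl[ e^{-\rho(t\wedge\chi^R)} \phi_k(Y_n(t\wedge\chi^R)) \bigr] &= \phi_k(J_n x) + \mathbb E \int_0^{t\wedge\chi^R} e^{-\rho s} \Big[ -\rho\phi_k(Y_n) + \langle \tilde A Y_n, D\phi_k(Y_n)\rangle_X \\
&\quad + \langle J_n \tilde b(Y,u), D\phi_k(Y_n)\rangle_X + \tfrac{1}{2}\operatorname{tr}\bigl(J_n\sigma(Y)\sigma(Y)^* J_n^* D^2\phi_k(Y_n)\bigr)\Big]\,ds.
\end{align*}
Letting $k \to \infty$ first, dominated convergence replaces $\phi_k, D\phi_k, D^2\phi_k$ by $\phi, D\phi, \tilde D^2\phi$ (only the $(x_0,x_0)$-block of the latter appears in the trace). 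For $n\to\infty$, exploit that $D\phi(y) = BD_{-1}\phi(y) = B^{1/2}(B^{1/2} D_{-1}\phi(y)) \in \operatorname{Range}(B^{1/2}) = D(\tilde A^*)$ by \eqref{eq:R(B^1/2)=D(A^*)} to rewrite
\begin{align*}
\langle \tilde A Y_n, D\phi(Y_n)\rangle_X = \langle \tilde A J_n Y, D\phi(Y_n)\rangle_X = \langle J_n Y, \tilde A^* D\phi(Y_n)\rangle_X,
\end{align*}
where $\tilde A^* D\phi(Y_n) = \tilde A^* B^{1/2}(B^{1/2} D_{-1}\phi(Y_n))$ is uniformly bounded on bounded sets by \eqref{eq:A*B^1/2}. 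The identity follows from a final application of dominated convergence using $J_n \to I$ strongly, the uniform continuity of $\tilde A^* D\phi$ on bounded sets, $J_n\sigma(Y) \to \sigma(Y)$ in Hilbert--Schmidt norm, and $Y_n \to Y$.

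The main obstacle is combining the unboundedness of $\tilde A$ with the absence of a Fr\'echet second derivative for $\phi$. This is resolved by two structural features: (i) $D\phi \in D(\tilde A^*)$ with $\tilde A^* D\phi$ uniformly bounded on bounded sets, which permits moving the unbounded operator onto $D\phi$ via the adjoint identity above; and (ii) the degeneracy of $\sigma$ in the $x_1$-direction reduces the trace to $\operatorname{tr}(\sigma_0\sigma_0^T D^2_{x_0^2}\phi)$, where $D^2_{x_0^2}\phi$ is a genuine continuous second derivative on $\mathbb R^n$ by Lemma \ref{lemma:D^2_x_0^2phi_in_D}. Without these two facts, neither the drift term nor the diffusion term would admit a meaningful limit.
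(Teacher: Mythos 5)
Your Yosida-type regularization of the trajectory is a valid alternative to the paper's: instead of re-solving the SDE with the bounded generator $\tilde A_N = N\tilde A(NI-\tilde A)^{-1}$ (as the paper does), you apply $J_n = n(nI-\tilde A)^{-1}$ directly to the fixed process $Y$, which turns the mild solution into a strong one while preserving all the relevant bounds. The adjoint manipulation $\langle \tilde A Y_n, D\phi(Y_n)\rangle_X = \langle J_n Y, \tilde A^* D\phi(Y_n)\rangle_X$, the use of \eqref{eq:R(B^1/2)=D(A^*)} and \eqref{eq:A*B^1/2} to control $\tilde A^* D\phi$, and the observation that the stopping time $\chi^R$ (which depends on $Y$, not $Y_n$) still confines $Y_n$ to $\overline{B_R}$ since $|J_n|_{\mathcal L(X)}\leq 1$, are all correct and in line with the paper's treatment of the drift term.

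The gap is in the second layer of approximation. You propose to build $\phi_k\in C^2(X)$ from $\phi\in\mathcal D$ by "combining $P_N$ with a smooth mollification on $X^N$," so that classical It\^o applies to $\phi_k(Y_n)$ and you can pass $k\to\infty$. But a partial mollification of $\phi$ in finitely many coordinates leaves the dependence on the orthogonal complement $Q_N x$ exactly as regular as it was before --- namely $C^{1,1}(X_{-1})$, not $C^2$. Indeed, this is precisely what Section \ref{sec:lions_approx} of the paper establishes: the limit of such partial convolutions lands in $\mathcal D$ (weak second derivative, uniformly continuous), not in $C^2(X)$. There is no indication that $\mathcal D$ can be approximated by genuine $C^2(X)$ functions with locally uniform convergence of first derivatives and weak-operator convergence of second derivatives, and such an approximation would essentially already be what Lions' non-smooth It\^o formula \cite[Lemma III.2]{lions-infdim1} buys you. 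The paper's proof avoids this entirely: it keeps $\phi\in\mathcal D$ fixed, invokes Lions' formula as a black box for the equation with bounded generator $\tilde A_N$, and only passes to the limit in the operator. Your proposal would need either to cite a result granting $C^2$-density in $\mathcal D$ with the required convergences (which I do not believe is available), or to reprove the bounded-generator non-smooth It\^o formula along the lines of Lions' original argument; as written, the step "apply classical It\^o to $\phi_k(Y_n)$" has no valid input $\phi_k$.
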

\begin{proof}
Let $R_N:=(N I-\tilde A)^{-1}$ be the resolvent operator of $\tilde A$ for $N \in \mathbb N$ 
and let $\tilde A_N=N \tilde AR_N$ be the Yosida approximation of $\tilde A$. Denote by $Y^N$ the solution of the state equation with $\tilde A_N$ in place of $\tilde A$, that is
\begin{equation*}
dY^N(s) = [\tilde A_N Y^N(s)+\tilde b(Y^N(s),u(s))]dt + \sigma(Y^N(s))\,dW(s), \quad Y^N(0)= x.
\end{equation*}
By standard theory, e.g. \cite[Proposition 1.132]{fgs_book},  we have
\begin{align}\label{eq:proof_conv_expect_YN-Y}
\lim_{N \to \infty} \mathbb E \left[ \sup_{s \in [0,T]} \left |Y^N(s)-Y(s) \right|_X^2 \right]=0, \quad \forall T>0.
\end{align}
We define
$
\chi_N^R:=\inf \left\{s \in[0, T]:\left|Y^N(s)\right|_X>R\right\}.
$
By \eqref{eq:proof_conv_expect_YN-Y}, up to a subsequence, we have $\sup_{s \in [0,T]} \left |Y^N(s)-Y(s) \right|_X \to 0$ a.s. so that $\lim_{N \to \infty} \chi_N^R=\chi^R \quad a.s. \quad \forall R>0 .$

Since $\tilde A_N \in \mathcal L(X)$ we can apply the non-smooth Ito's formula from \cite[Lemma III.2]{lions-infdim1} which holds for equations with bounded terms and for $\phi$, $D\phi, \tilde D^2\phi$ bounded. Note that in our case $\Phi, D\phi, D^2\phi$  are only bounded on bounded sets of $X$, however, since we are using the stopping time $\chi_N^R$, this is enough in order to apply \cite[Lemma III.2]{lions-infdim1}. (Observe that in \cite{lions-infdim1} what we call $\tilde D^2 \phi$ is denoted by $D^2 \phi$, see page 246 there.) Therefore we have
\begin{small}
\begin{align}\label{eq:proof_ito_projected}
\mathbb E  \left[  e^{-\rho (t \wedge \chi_N^R)}  \phi(Y^N(t \wedge \chi_N^R)) \right] \nonumber 
& =\phi( x)+\mathbb E  \int_0^{t \wedge \chi_N^R} \Big [- \rho e^{-\rho s} \phi(Y^N(s)) +   e^{-\rho s}  \langle \tilde A_N  Y^N(s), D \phi(Y^N(s))  \rangle_X \nonumber \\
& \quad +e^{-\rho s } \langle \tilde  b(Y^N(s),u(s)), D \phi(Y^N(s))  \rangle_X    
+ \frac 1 2 e^{-\rho s} \tr \left ( \sigma(Y^N(s))\sigma(Y^N(s))^* \tilde D^2\phi(Y^N(s)) \right ) \Big ]   ds \nonumber \\
&=\phi( x)+\mathbb E  \int_0^{t \wedge \chi_N^R} \Big [- \rho e^{-\rho s} \phi(Y^N(s)) +   e^{-\rho s}  \langle  Y^N(s), (\tilde A_N)^* D \phi(Y^N(s))  \rangle_X \nonumber \\
& \quad  +e^{-\rho s } \tilde  b_0(Y^N(s),u(s)) \cdot D_{x_0} \phi(Y^N(s))
+ \frac 1 2 e^{-\rho s} \tr \left ( \sigma_0(Y^N(s))\sigma_0(Y^N(s))^T D^2_{x_0^2}\phi(Y^N(s)) \right ) \Big ]   ds.
\end{align}
\end{small}
We now prove that 
\begin{align*}
\lim_{N \to \infty}|(\tilde A_N)^* D \phi(Y^N(s))-\tilde A^* D \phi(Y(s))|_X =0 \quad a.s. \,\, \forall s \geq 0.
\end{align*}
Indeed we have
\begin{align*}
|(\tilde A_N)^* D \phi(Y^N(s))-\tilde A^* D \phi(Y(s))|_X &\leq |(\tilde A_N)^*[D \phi(Y^N(s))- D \phi(Y(s))]|_X+|(\tilde A_N)^* D \phi(Y(s))-\tilde A^* D \phi(Y(s))|_X.
\end{align*}
Consider the first term.  We first note that  by \eqref{eq:A*B^1/2} and the fact that $ |(R_N)^* |_{\mathcal L(X)} =|R_N |_{\mathcal L(X)}\leq 1/N$ (as $\tilde A$ is maximal dissipative), we have $|(\tilde A_N)^*B^{1/2}|_{\mathcal L(X)}=N| (R_N)^*\tilde A^*  B^{1/2}|_{\mathcal L(X)} \leq N |R_N |_{\mathcal L(X)}  |\tilde A^*  B^{1/2}|_{\mathcal L(X)} \leq C$. Hence, since $D\phi=BD_{-1}\phi$ for  $\phi \in C^{1,1} ( X_{-1})$,
\begin{align*}
|(\tilde A_N)^*[D \phi(Y^N(s))- D \phi(Y(s))]|_X&=|(\tilde A_N)^*B^{1/2}[B^{1/2}D_{-1} \phi(Y^N(s))- B^{1/2}D_{-1} \phi(Y(s))]|_X\\
&\leq |(\tilde A_N)^*B^{1/2} |_{\mathcal L(X)} |D_{-1} \phi(Y^N(s))- D_{-1} \phi(Y(s))|_{-1}\\
& \leq C |D_{-1} \phi(Y^N(s))- D_{-1} \phi(Y(s))|_{-1} \leq C | Y^N(s)- Y(s)|_{-1} \\
& \leq C | Y^N(s)- Y(s)|_X \xrightarrow[]{N \to \infty} 0, \quad a.s.\,\,\forall s \geq 0.
\end{align*}
For the second term, we note again that $D \phi(Y(s))=BD_{-1} \phi(Y(s)) \in R(B^{1/2})= D(\tilde A^*)$ for every $s \leq T$, so that by the fundamental property of Yosida approximations
\begin{align*}
\lim_{N \to \infty}|\tilde A_N^* D \phi(Y(s))-\tilde A^* D \phi(Y(s))|_X=0, \quad a.s.\,\,\forall s \geq 0,
\end{align*}
and we have the claim. 

We also have $|(\tilde A_N)^*D \phi(x)|_X\leq |\tilde A^*D \phi(x)|_X\leq C_R$ for $|x|_X\leq R$.
The lemma now follows by letting $N \to \infty$  in \eqref{eq:proof_ito_projected} and using the dominated convergence theorem (note the stopping time $\chi_N^R$).
\end{proof}

\section{Second approximation in the space \texorpdfstring{$\mathcal D$}{D}}\label{sec:lions_approx}
In this section we use a regularization procedure inspired by \cite{lions-infdim1} to produce functions $V_\epsilon^\eta \in \mathcal D$ approximating $V_\epsilon,$ which are almost classical supersolutions of perturbed HJB equations. Since  $V_\epsilon^\eta \in \mathcal D$, we will then be able to use Dynkin's formula in order to solve the optimal control problem.
\begin{lemma}\label{lemma:z_eta}
Let Assumptions \ref{hp:state} and \ref{hp:cost} hold.
Let $z \in C^{1,1}(X_{-1})$ be such that $D_{-1}z$ is bounded on $X_{-1}$. Suppose $z$ is a viscosity supersolution of
\begin{align*}
\rho z(x)-\langle  \tilde A^*Dz(x),x \rangle_X+\tilde H \left (x,D_{x_0}z(x),D^2_{x^2_0}z(x) \right) =-\gamma \quad \mbox{in}\,\,X.
\end{align*}
Then for every $\eta>0$ there exist $z^\eta \in \mathcal D$ such that 
\begin{equation}\label{eq:convergence_convolution}
|z-z^\eta|\leq C \eta, \quad |D_{-1}z-D_{-1}z^\eta|_{-1}\leq C \eta
\end{equation}
for some $C>0$ (independent of $\eta$)
and such that for every $R>0$, $z^\eta$ is a viscosity supersolution of
\begin{align*}
\rho z^\eta(x)-\langle  \tilde A^*Dz^\eta(x),x \rangle_X+\tilde H \left (x,D_{x_0}z^\eta(x), D^2_{x_0^2}z^\eta(x) \right) \geq -\gamma - \omega_R(\eta) \quad \mbox{in}\,\, B_R
\end{align*}
for some local moduli of continuity $\omega_R$.
\end{lemma}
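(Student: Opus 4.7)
The plan is to adapt the partial convolution technique introduced in \cite{lions-infdim1} for equations with bounded terms, modifying it to accommodate the unbounded operator $\tilde{A}^*$. Let $\{e_i\}_{i\in\mathbb{N}}$ be the $X_{-1}$-orthonormal basis from Section~\ref{sec:operator_B} (obtained by rescaling the eigenvectors $f_i$ of $B$), fix $N\in\mathbb{N}$ (possibly depending on $\eta$), and choose a standard smooth mollifier $\rho\in C_c^\infty(\mathbb{R}^N)$ with $\int \rho = 1$ and symmetric. Define
$$
z^\eta(x) := \int_{\mathbb{R}^N} z\!\left(x - \eta \sum_{i=1}^{N} y_i e_i\right)\rho(y)\,dy,\qquad x\in X_{-1}.
$$
A key observation is that each $e_i$ belongs to $D(\tilde{A}^*)$: since $B f_i = \lambda_i f_i$ one has $f_i = \lambda_i^{-1/2}B^{1/2}f_i\in R(B^{1/2})=D(\tilde{A}^*)$ by \eqref{eq:R(B^1/2)=D(A^*)}, so finite combinations of $e_i$'s preserve the domain of $\tilde{A}^*$. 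The estimates \eqref{eq:convergence_convolution} then follow immediately from the $C^{1,1}(X_{-1})$-regularity of $z$: since the translations $\sum y_i e_i$ have bounded $|\cdot|_{-1}$-norm for $y\in\mathrm{supp}\,\rho$, the Lipschitz continuity of $z$ and $D_{-1}z$ and the symmetry of $\rho$ yield $O(\eta)$ bounds.

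To see $z^\eta\in\mathcal{D}$, note that the partial convolution endows $z^\eta$ with classical smoothness along $e_1,\ldots,e_N$: integrating by parts transfers derivatives onto $\rho$, so all partials $\partial_{ij}z^\eta$ with $\min(i,j)\le N$ exist and are uniformly continuous on $X_{-1}$. For indices $i,j>N$, $z^\eta$ still has the $C^{1,1}(X_{-1})$-regularity inherited from $z$, so the difference quotients $t^{-1}(D_{-1}z^\eta(x+tk)-D_{-1}z^\eta(x))$ are bounded in $X_{-1}$; by weak-$X_{-1}$ compactness and uniqueness of the weak limit inferred from the partial smoothness, the limit in \eqref{eq:second_derivative_space_D} is well-defined and uniformly continuous in $x$. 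To establish the perturbed viscosity supersolution property on $B_R$, let $x_0\in B_R$ be a local minimum of $z^\eta+\phi+g$ with $\phi\in\Phi$, $g\in\mathcal{G}$. A doubling-of-variables argument in the finite-dimensional parameter $y$ (in the spirit of Step~1 of the proof of Proposition~\ref{prop:inf_conv_subsolution_perturbed}) produces, for each $y$ in the support of $\rho$, approximate minima of $z+\phi(\cdot+\eta\sum y_i e_i)+g(\cdot+\eta\sum y_i e_i)$ at shifted points $x_0-\eta\sum y_i e_i$; since $e_i\in D(\tilde A^*)$, these translated test functions remain admissible. Applying the stronger form of the supersolution inequality for $z$ (in the sense of Proposition~\ref{prop:inf_conv_subsolution_perturbed}) at each such point, integrating against $\rho(y)\,dy$, and controlling the perturbation of the arguments of $H$ by the uniform continuity of $H$ on bounded subsets of $X$ (Lemma~\ref{lemma:properties_H}(i)) yields the claimed inequality with error $-\gamma-\omega_R(\eta)$.

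The hard part is verifying $z^\eta\in\mathcal{D}$: while the weak Gateaux second-order condition \eqref{eq:second_derivative_space_D} is automatic for pairs $(h,k)$ lying in the convolution subspace $\operatorname{span}(e_1,\ldots,e_N)$, its validity in \emph{all} directions, with uniform continuity in $x\in X_{-1}$, is delicate and requires carefully combining the weak compactness of the difference quotients with the uniqueness of the weak limit inherited from the partial smoothness. A secondary difficulty is ensuring that, in the supersolution argument, the error from commuting $\tilde A^*$ through the partial convolution stays controlled by a local modulus $\omega_R(\eta)$ independent of $N$; this is feasible thanks to $\tilde A^* B^{1/2}\in\mathcal{L}(X)$ (see \eqref{eq:A*B^1/2}) and $e_i\in D(\tilde A^*)$, but requires careful tracking of the $N$-dependent constants when choosing $N=N(\eta,R)$.
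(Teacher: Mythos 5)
There is a genuine gap in your construction: a partial convolution over a \emph{fixed} finite set of directions $e_1,\ldots,e_N$ cannot produce a function in $\mathcal D$. The class $\mathcal D$ requires the weak second-order limit \eqref{eq:second_derivative_space_D} to exist and be uniformly continuous for \emph{all} pairs $h,k \in X_{-1}$, in particular for $h=k=e_{N+1}$; but in the unmollified directions $z^\eta$ inherits only the $C^{1,1}$ regularity of $z$, and a $C^{1,1}$ function need not have second derivatives anywhere on a given slice (consider $z$ with $D_{e_{N+1}}z(x)=|x^{N+1}|$, so $\partial_{N+1,N+1}z$ jumps across $\{x^{N+1}=0\}$; convolving in $e_1,\ldots,e_N$ changes nothing). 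Your appeal to ``weak-$X_{-1}$ compactness + uniqueness of the weak limit inferred from the partial smoothness'' does not close this: compactness gives subsequential weak limits of the difference quotients, but uniqueness is precisely what fails, because the partial smoothness concerns only the convolved coordinates and says nothing about the $e_{N+1}$-direction behavior. The paper avoids this by convolving in \emph{all} directions, taking $z^\eta=\lim_{k\to\infty} z^{\eta,k}$ with direction-dependent widths $\eta_i=\eta\sqrt{\lambda_i}/2^i$; the rapid decay makes the sequence $\{z^{\eta,k}\}$ Cauchy (with bounds on $z^{\eta,k}$, $D_{-1}z^{\eta,k}$, $\partial_{ij}z^{\eta,k}$, and $D_{-1}\partial_{ij}z^{\eta,k}$), and $\sum_i\eta_i=c\eta$ gives the $O(\eta)$ error estimates.

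A second, related problem with the fixed uniform width: translating by $\eta\sum y_i e_i$ perturbs the $X$-norm by roughly $\eta\sum|y_i|/\sqrt{\lambda_i}$, which blows up as $N$ grows since $\lambda_i\to 0^+$ and $|e_i|_X=1/\sqrt{\lambda_i}$. This would destroy any uniform modulus $\omega_R(\eta)$ in the Hamiltonian-perturbation estimate; the paper's choice $\eta_i=\eta\sqrt{\lambda_i}/2^i$ ensures the total $X$-norm shift is bounded by $\eta$. Finally, your verification of the perturbed supersolution inequality invokes a ``stronger form of the supersolution inequality for $z$'' that the hypotheses do not provide (Proposition~\ref{prop:inf_conv_subsolution_perturbed} is about the inf-convolution $V_\epsilon$, not an arbitrary $C^{1,1}(X_{-1})$ supersolution $z$), and the claim that $x_0-\eta\sum y_i e_i$ is an approximate minimum of the translated test function for \emph{each} $y$ in $\mathrm{supp}\,\rho$ is not what a doubling-of-variables argument yields. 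The paper instead proceeds through finite-dimensional restrictions $z_{x'}$ and $z^{\eta,k}_{x'}$, uses that $C^{1,1}$ viscosity supersolutions on $\mathbb R^N$ satisfy the inequality a.e.\ (and conversely), and then passes to the limit $N\to\infty$, $k\to\infty$ via consistency of viscosity solutions.
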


\begin{proof}
The proof extends the ideas of \cite[Proof of Lemma IV.1]{lions-infdim1} to the case of HJB equations with unbounded operators. We point out that in this section we use notation which is different from the one used elsewhere.

We take the orthonormal basis $\{e_i \}$ of $X_{-1}$ defined in Subsection \ref{sec:operator_B}, where $e_i \in X$ for $i\in\mathbb N$, and we identify $X_{-1}$ with $ l^2(\mathbb N)$ or equivalently with $\mathbb R^k \times X^{k,\perp}$ for  $k \in \mathbb N$, where recall from Subsection \ref{sec:operator_B} $$X^k=\mbox{span}(f_1,...,f_k)=\mbox{span}(e_1,...,e_k).$$
Hence for   $x \in  X_{-1}$  we write $x=(x^1,x^2,...)=(x^1,...,x^k,x')=(x_k,x')$ where $x_k=(x^1,...,x^k) \in X^k \sim \mathbb R^k$, $x'=(x^{k+1},x^{k+2},...) \in X^{k,\perp}$. Here $\{x_i\}$ represent the coordinates of $x \in X_{-1}$ with respect to the orthonormal basis $\{ e_i\}$ of $X_{-1}$. Since $X \subset X_{-1}$, with this notation any element in $x \in X$ will also be denoted by $x=(x_k,x')$. We remark that this notation should not be confused with the notation $x=(x_0,x_1) \in X$ which is used in the rest of the paper, so in general we have $x_k \neq x_0$ and $x' \neq x_1$. We also point out that sometimes we will use $N$ instead of $k$, i.e. $x=(x_N,x')$.

Consider a standard mollifier function $ \rho \in C^\infty(\mathbb R)$ with $\mbox{supp}(\rho) \subset [-1,1]$, $\rho \geq 0$, $\int_\mathbb{R} \rho dx=1$. Let $\eta>0$. We define 

\begin{align*}
&z^\eta (x)=\lim_{k \to \infty} z^{\eta,k}(x), \quad \quad    z^{\eta,k}(x)=\int_{\mathbb R^k} z(y^1,...,y^k,x') \prod_{i=1}^k \rho_{\eta_i}(x^i-y^i)  dy^1...dy^k,
\end{align*}
for every  $x=(x^1,x^2,...)=(x^1,...,x^k,x')=(x_k,x') \in X_{-1}$ with $x_k \in \mathbb R^k \sim X^k, x' \in X^{k,\perp}$, $\eta_i:=\eta \sqrt{\lambda_i}/2^{i}$, $\rho_h(x)=(1/h )  \rho(x/h)$ for every $h >0$. Recall that the $\lambda_i$ are the eigenvalues of the operator $B$, see Subection \ref{sec:operator_B}. Note that
\begin{equation}\label{eq:sum_eta_i}
\sum_{i=1}^\infty \eta_i =c\eta.
\end{equation}
\textbf{Step 1:} We prove that $z^\eta$ is well defined, $z^\eta \in \mathcal D$ and it is close to $z$.
We first claim that 
\begin{align}\label{eq:proof_convolution_cauchy}
 \sup_{X_{-1}} |z-z^{\eta,1}|\leq C \eta_1, \quad \sup_{X_{-1}} |z^{\eta,k+1}-z^{\eta,k} |\leq C \eta_{k+1},\quad k\in\mathbb N.
 \end{align}
 Indeed, since $z$ is Lipschitz in $X_{-1}$, we have
 \begin{align*}
  |z(x)-z^{\eta,1}(x)|& = \Bigg | \int_\mathbb{R} \left  [ z(x^1,x')- z(y^1,x') \right ] \rho_{\eta_1}(x^1-y^1) dy^1 \Bigg | \leq C \int_\mathbb{R}   |x^1-y^1| \rho_{\eta_1}(x^1-y^1) dy^1 \leq C \eta_1
 \end{align*}
 and for $k \in \mathbb N$
 \begin{align*}
  |z^{\eta,k+1}(x)-z^{\eta,k}(x)| = &  \Bigg | \int_{\mathbb{R}^k} \prod_{i=1}^k \rho_{\eta_i}(x^i-y^i) \Bigg [ \int_\mathbb{R} z(y^1,...,y^k,y^{k+1},x') \rho_{\eta_{k+1}}(x^{k+1}-y^{k+1})   dy^{k+1} \\
  & - z(y^1,...,y^k, x^{k+1},x')  \Bigg ] dy^1...dy^k \Bigg |\\
   \leq &   \int_{\mathbb{R}^k} \prod_{i=1}^k \rho_{\eta_i}(x^i-y^i) \int_\mathbb{R} \big |z(y^1,...,y^k,y^{k+1},x') - z(y^1,...,y^k, x^{k+1},x') \big  | \\
 &   \times \rho_{\eta_{k+1}}(x^{k+1}-y^{k+1}) dy^{k+1} dy^1...dy^k \leq C \eta_{k+1}
 \end{align*}
We recall that $D_{-1}z^{\eta,k}=(D_{-1}z)^{\eta,k}$, i.e. the derivative of the convolution is the convolution of the derivative. Then, since $D_{-1}z$ is Lipschitz in $X_{-1}$, with a similar calculation (with $D_{-1}z$ in place of $z$ inside the integrals and these are now meant in the Bochner sense) we obtain
 \begin{align}\label{eq:proof_derivative_convolution_cauchy}
\sup_{X_{-1}} |D_{-1}z-D_{-1}z^{\eta,1}|_{-1} \leq C \eta_1, \quad \sup_{X_{-1}} |D_{-1}z^{\eta,k+1}-D_{-1}z^{\eta,k} |_{-1}\leq C \eta_{k+1},
\quad k\in\mathbb N.
\end{align}
Now, since $D_{-1}z$ is Lipshitz, if $x=(x_k,x')\in X_{-1}$, $\partial_{ij} z(x)$ exists for $i,j\leq k$ for a.e. $x_k$ and there exists $C>0$ such that $|\partial_{ij} z| \leq C$ (and $C>0$ is independent of $i,j,k$). Thus, for every $i,j \leq k$, we have $\partial_{ij}z^{\eta,k}=(\partial_{ij}z)^{\eta,k}$ so that
\begin{align}\label{eq:proof_second_derivative_bounded}
\sup_{X_{-1}} |\partial_{ij} z^{\eta,k}| \leq C \quad \forall i,j \leq k.
\end{align} 
Next we show that
\begin{align}\label{eq:proof_second_derivative_convergence}
\sup_{X_{-1}} |\partial_{ij} z^{\eta,k+1}-\partial_{ij} z^{\eta,k}| \leq C \frac{\eta_{k+1}}{\eta_i \eta_j}, \quad \forall i,j \leq k.
\end{align}
Indeed, note that $\partial_{i } \rho_{\eta_i}(x_i)=1/ \eta_i c_{\eta_i}(x_i)$ where $c_{\eta_i}(x_i)=1/ \eta_i  \rho' (x_i / \eta_i)$.  Assume $i \neq j$ (the case $i=j$ is treated in a similar way). Since $z$ is Lipschitz, we have
\begin{small}
 \begin{align*}
  |\partial_{ij }z^{\eta,k+1}(x)-& \partial_{ij } z^{\eta,k}(x)| =   \Bigg | \int_{\mathbb{R}^k} \partial_{i } \rho_{\eta_i}(x_i-y_i)  \partial_{j }\rho_{\eta_j}(x_j-y_j)   \prod_{h=1, h \neq i,j}^k \rho_{\eta_h}(x_h-y_h) \times \\
  & \qquad\qquad\quad
   \Bigg [ \int_\mathbb{R} z(y^1,...,y^k,y^{k+1},x') \rho_{\eta_{k+1}}(x^{k+1}-y^{k+1})   dy^{k+1}  - z(y^1,...,y^k, x^{k+1},x')  \Bigg ] dy^1...dy^k \Bigg |\\
   \leq & \frac{1}{\eta_i \eta_j}  \int_{\mathbb{R}^k} \prod_{h=1, h \neq i,j}^k \rho_{\eta_h}(x^h-y^h) c_{\eta_i}(x_i-y_i) c_{\eta_j}(x_j-y_j)
    \int_\mathbb{R} \big |z(y^1,...,y^k,y^{k+1},x') - z(y^1,...,y^k, x^{k+1},x') \big  | \\
 &  \qquad\qquad\qquad\qquad \times \rho_{\eta_{k+1}}(x^{k+1}-y^{k+1}) dy^{k+1} dy^1...dy^k  \leq C \frac{\eta_{k+1}}{\eta_i \eta_j}.
 \end{align*}
 \end{small}
 Now observe that $D_{-1} \partial_{ij}z^{\eta,k} =\partial_{ij} D_{-1}z^{\eta,k}$ for every $i,j \leq k$. Then, a similar calculation as that done to prove \eqref{eq:proof_second_derivative_convergence} (with $D_{-1}z$ in place of $z$ inside the integrals), since $D_{-1}z$ is Lipschitz in $X_{-1}$, we obtain
 \begin{align}\label{eq:proof_third_derivative_convergence}
\sup_{X_{-1}} |D_{-1} \partial_{ij} z^{\eta,k+1}-D_{-1} \partial_{ij} z^{\eta,k}|_{-1}=\sup_{X_{-1}} |\partial_{ij} D_{-1} z^{\eta,k+1}-\partial_{ij} D_{-1} z^{\eta,k}|_{-1} \leq C \frac{\eta_{k+1}}{\eta_i \eta_j}, \quad \forall i,j \leq k.
\end{align}
 Finally, since $z$ is Lipschitz in $X_{-1}$ and thus $D_{-1}z$ is bounded, using a similar calculation as that to prove \eqref{eq:proof_third_derivative_convergence}, we get
 \begin{align}\label{eq:proof_third_derivative_bounded}
 \sup_{X_{-1}}| D_{-1} \partial_{ij} z^{\eta,k}|_{-1}= \sup_{X_{-1}}|  \partial_{ij} D_{-1}z^{\eta,k}|_{-1} \leq \frac{C}{\eta_i \eta_j} , \quad \forall i,j \leq k.
 \end{align}
 
Set $g_k=z^{\eta,k}-z^{\eta,1}$. Note that by \eqref{eq:sum_eta_i} and \eqref{eq:proof_convolution_cauchy}, we have $\{g_k\} \subset C^{1,1}_b (X_{-1})$, where $C^{1,1}_b (X_{-1})$ is the subspace of functions in $C^{1,1} (X_{-1})$ which are bounded and have bounded derivatives. Moreover, $g^{k+1}-g^k=z^{\eta,k+1}-z^{\eta,k}$, so that by \eqref{eq:sum_eta_i}, \eqref{eq:proof_convolution_cauchy}, \eqref{eq:proof_derivative_convolution_cauchy}, $\{g_k \}$ is a Cauchy sequence in $C^{1}_b (X_{-1})$, where $C^{1}_b (X_{-1})$ is the subspace of functions in $C^{1} (X_{-1})$ which are bounded and have bounded derivatives. Then $g_k \to g$ in $C^{1}_b (X_{-1})$ as $k\to \infty$ to a function $g \in C^{1}_b (X_{-1})$ of the form $g=z^{\eta}-z^{\eta,1}$ for some $z^{\eta} \in C^{1} (X_{-1})$. This implies that
   \begin{equation}\label{eq:proof_convergence_convolution}
 \lim_{k \to \infty} \sup_{X_{-1}} |z^{\eta,k} - z^{\eta}| =0, \quad  \lim_{k \to \infty} \sup_{X_{-1}} |D_{-1}z^{\eta,k} - D_{-1}z^{\eta}|_{-1} =0.
 \end{equation} 
 Note that since $z, D_{-1}z$ are Lipschitz in $X_{-1}$, $z^{\eta,k},D_{-1}z^{\eta,k}$ are families of  Lipschitz functions with respect to $|\cdot |_{-1}$ with Lipschitz constants of $z, D_{-1}z$ (so independent of $\eta,k$).
Thus, letting $k \to \infty$, we derive that $z^\eta \in C^{1,1}(X_{-1})$ and $z^\eta, D_{-1}z^\eta$ are Lipschitz with respect to $|\cdot |_{-1}$ with Lipschitz constants independent of $\eta$.

Now, by  \eqref{eq:proof_convolution_cauchy}, \eqref{eq:proof_derivative_convolution_cauchy}, \eqref{eq:proof_convergence_convolution}, we have
 \begin{align*}
 |z^{\eta}(x)-z(x)|\leq  |z^{\eta} (x)- z^{\eta,k+1}(x)|+ \sum_{i=1}^{k} |z^{\eta,i+1}(x)-z^{\eta,i}(x)| +  |z^{\eta,1}(x)-z(x)| \leq  \omega_\eta \left (\frac 1 k \right) +C \sum_{i=0}^{k} \eta_{i+1}
 \end{align*}
 and
  \begin{align*}
 |D_{-1}z^{\eta}(x)-D_{-1}z(x)|_{-1} &\leq  |D_{-1}z^{\eta} (x)- D_{-1}z^{\eta,k+1}(x)|_{-1}\\
 &+ \sum_{i=1}^{k} |D_{-1}z^{\eta,i+1}(x)-D_{-1}z^{\eta,i}(x)|_{-1} +  |D_{-1}z^{\eta,1}(x)-D_{-1}z(x)|_{-1}\\
 &  \leq  \omega_\eta \left (\frac 1 k \right) +C \sum_{i=0}^{k} \eta_{i+1}
 \end{align*}
 for some modulus $\omega_\eta$, so that, by letting $k\to \infty$ and recalling \eqref{eq:sum_eta_i}, we obtain \eqref{eq:convergence_convolution}.
 
Proceeding in a similar way, by \eqref{eq:proof_second_derivative_bounded}, \eqref{eq:proof_second_derivative_convergence}, \eqref{eq:proof_third_derivative_convergence}, \eqref{eq:proof_third_derivative_bounded}, we have 
 \begin{equation}\label{eq:proof_second_derivative_convergence_to_z_eta}
 \lim_{k \to \infty} \sup_{X_{-1}} |\partial_{ij}z^{\eta,k} - \partial_{ij} z^\eta| =0, \quad  \lim_{k \to \infty} \sup_{X_{-1}} |D_{-1} \partial_{ij}z^{\eta,k} - D_{-1}\partial_{ij} z^\eta|_{-1} =0,  \quad \forall i,j \in \mathbb N.
 \end{equation}
Note that, since $z, D_{-1}z$ are Lipschitz in $X_{-1}$, by the properties of convolutions we have that for every $i,j$, $\partial_{ij}z^{\eta,k}, D_{-1} \partial_{ij}z^{\eta,k}=\partial_{ij}D_{-1}  z^{\eta,k}$ are  families of Lipschitz functions with respect to the $|\cdot|_{-1}$ norm with Lipschitz constants independent of $\eta,k$. Letting $k \to \infty$, it follows that for every $i,j$ the functions $\partial_{ij}  z^\eta \in C^{1,1}(X_{-1})$ and $\partial_{ij}  z^\eta$ are Lipschitz with respect to the $|\cdot|_{-1}$ norm with a Lipschitz constant independent of $\eta$. Moreover, note that by \eqref{eq:proof_second_derivative_bounded}  we have $|\partial_{ij}z^\eta| \leq C$ for a constant $C$ independent of $i,j$.
 
To conclude that $z^\eta \in \mathcal D$, we have to check that for every $k,h \in X_{-1}$ the limit
 \begin{equation}\label{eq:z_eta_second_order_derivative}
\lim_{t \to 0} \frac {1} {|t|} \langle D_{-1}z^\eta (x+tk)-D_{-1}z^\eta(x) ,h \rangle_{-1}
\end{equation}
exists and is uniformly continuous in $X_{-1}$.
Fix $k,h \in X_{-1}$, $n>0$ and set $k_n=(k^1,...k^n,0,0...), h_n=(h^1,...h^n,0,0...) \in X_{-1}$. 

Let $x\in X_{-1}$. We denote by $A_n(x)$ the operator from $X^n$ to $X^n$ given by the matrix $(\partial_{ij}z^\eta(x))_{i,j}$. We extend it to $X_{-1}$ by setting
$T_n(x)=P_nA_n(x)P_n$. We have $|T_n(x)|_{\mathcal L(X_{-1})}\leq C$ for all $n$. We have for $n>m$
\[
\begin{split}
|\langle T_n(x)h,k\rangle_{-1}&-\langle T_m(x)h,k\rangle_{-1}|=|\langle A_n(x)P_n h,P_nk\rangle_{-1}-\langle A_n(x)P_mh,P_mk\rangle_{-1}|
\\
&\leq
|\langle A_n(x)(P_n-P_m) h,P_nk\rangle_{-1}|+|\langle A_n(x)P_mh,(P_n-P_m)k\rangle_{-1}|\to 0\quad\mbox{as}\,\,m\to\infty.
\end{split}
\]
Therefore the sequence $\{\langle T_n(x)h,k\rangle_{-1}\}$ is a Cauchy sequence and thus $T_n(x)h$ converges weakly in $X_{-1}$ to an element of $X_{-1}$ which we denote by $\tilde D_{-1}^2z^\eta(x)h$. It is easy to see that such defined $\tilde D_{-1}^2z^\eta(x)$ is a linear, bounded and self-adjoint operator on $X_{-1}$.

We fix $\delta>0$ and let $n_0$ be such that for $n\geq n_0$ we have
\begin{equation}\label{eq:A(x)n}
\Bigg | \langle \tilde D_{-1}^2z^\eta(x)h,k\rangle_{-1}-\sum_{i,j=1}^n \partial_{ij}z^\eta(x)h_i k_j \Bigg | \leq \frac{\delta}{3}.
\end{equation}
We now estimate
\begin{small}
\begin{align}\label{eq:conv-n-a}
\Bigg | \frac {1} {|t|} \langle D_{-1}z^\eta (x+tk)&-D_{-1}z^\eta(x)  ,h \rangle_{-1}  - \sum_{i,j=1}^n \partial_{ij}z^\eta(x)h^i k^j  \Bigg | 
\\
&  \leq  \frac {1} {|t|} \Bigg  |  \langle D_{-1} z^\eta (x+tk)-D_{-1} z^\eta(x)  ,h \rangle_{-1} -  \langle D_{-1} z^\eta (x+tk_n)-D_{-1} z^\eta(x)  ,h_n \rangle_{-1}   \Bigg |\nonumber\\
  & \quad + \Bigg | \frac {1} {|t|} \langle D_{-1} z^\eta (x+tk_n)-D_{-1} z^\eta (x) ,h_n \rangle_{-1}   -  \sum_{i,j=1}^n \partial_{ij}z^\eta(x)h^i k^j  \Bigg | 
  \nonumber\\
 &  =:  I_1(x,n,t) +I_2(x,n,t).\nonumber
\end{align}
\end{small}
By the Lipschitzianity of $D_{-1} z$ we have
\begin{align*}
I_1(x,n,t) & \leq \frac {1} {|t|} \Big  |  \langle D_{-1} z^\eta (x+tk)-D_{-1} z^\eta(x)  ,h-h_n \rangle_{-1} \Big | + \frac {1} {|t|} \Big  |  \langle D_{-1} z^\eta (x+tk)-D_{-1} z^\eta(x+tk_n)  ,h_n \rangle_{-1} \Big |\\
& \leq C |k|_{-1} |h-h_n|_{-1}+ C |k-k_n|_{-1} |h|_{-1}.
\end{align*}
Regarding $I_2$, since $s\mapsto \langle D_{-1} z^\eta (x+sk_n),h_n \rangle_{-1}$ is smooth, by the mean value theorem there is $\tilde t \in \mathbb R,
|\tilde t| \leq |t|$ such that
\begin{align*}
I_2(x,n,t)& = \left | \sum_{i,j=1}^n \partial_{ij}z^\eta(x+\tilde t k^n)h^i k^j - \sum_{i,j=1}^n \partial_{ij}z^\eta(x)h^i k^j  \right |  \leq C_n |k|_{-1}^2 |h|_{-1} |t|,
\end{align*}
where the inequality follows by the Lipschitzianity of $\partial_{ij}z^\eta$ for every $i,j  \in \mathbb N.$

We can now find $\overline n>n_0$ such that $I_1(x,\overline n,t)<\delta /3$. Then we choose $|t|$ small enough such that $I_2(x,\overline n,t) <\delta/3$, so that for such $t$ we have
\[
\Bigg |\frac {1} {|t|} \langle D_{-1}z^\eta (x+tk)-D_{-1}z^\eta(x) ,h \rangle_{-1}- \langle \tilde D_{-1}^2z^\eta(x)h,k\rangle_{-1}\Bigg |<\delta.
\]
Therefore the limit in \eqref{eq:z_eta_second_order_derivative} exists and is equal to $\langle \tilde D_{-1}^2z^\eta(x)h,k\rangle_{-1}$. To prove that the latter expression is uniformly continuous in $X_{-1}$, we send $t\to 0$ in \eqref{eq:conv-n-a} to obtain
\[
\Bigg |\langle \tilde D_{-1}^2z^\eta(x)h,k\rangle_{-1}- \sum_{i,j=1}^n \partial_{ij}z^\eta(x)h^i k^j  
\Bigg |\leq C |k|_{-1} |h-h_n|_{-1}+ C |k-k_n|_{-1} |h|_{-1}
\]
which shows that $\langle \tilde D_{-1}^2z^\eta(x)h,k\rangle_{-1}$ is the uniform limit of uniformly continuous functions in $X_{-1}$.

\noindent
\textbf{Step 2:} Let $N \in\mathbb N$. We will prove that for every fixed $\bar x' \in X^{N,\perp}$ such that $(0,\bar x') \in X$, the function $z_{\bar x'}:=z(\cdot,\bar x')$ is a viscosity supersolution of a certain HJB equation on $\mathbb{R}^N$.

We recall that we use the notation $x=(x_N,x') \in X$ defined at the beginning of the proof.
Let $\bar x_N \in X^N$ be a minimum of $z_{\bar x'}+\Psi(\cdot)=z(\cdot, \bar x')+\Psi(\cdot)$ for $\Psi \in C^2(\mathbb R^N)$. (This means that if $x=\sum_i x^i e_i$ then $\Psi(x)=\Psi(x_N)=\Psi(x^1,...,x^N)$.)
Then, for every $x=(x_N,x') \in X$, using $z \in C^{1,1}(X_{-1})$ and Young's inequality, for any $\delta>0$
\begin{align*}
z(x_N,x')+\Psi(x_N) =& z(x_N,x')-z(x_N,\bar x')- \langle D_{-1}z(x_N,\bar x'),(0, x'-\bar x') \rangle_{-1} \\
& \quad +z(x_N,\bar x')+\Psi(x_N)+\langle D_{-1}z(x_N,\bar x'),(0, x'-\bar x') \rangle_{-1} \\
& \geq -\frac C 2 |(0,x'-\bar x')|_{-1}^2+z(x_N,\bar x')+\Psi(x_N)+\langle D_{-1}z(\bar x_N,\bar x'),(0, x'-\bar x') \rangle_{-1}\\
 & \quad  -C  |(x_N-\bar x_N,0)|_{-1}|(0,x'-\bar x')|_{-1} \\
& \geq z(\bar x_N,\bar x')+\Psi(\bar x_N)+\langle D_{-1}z(\bar x_N,\bar x'),(0, x'-\bar x') \rangle_{-1}\\
&\quad - \frac \delta 2 |(x_N-\bar x_N,0)|_{-1}^2- \frac C 2 \left (1+ \frac 1 \delta \right )|(0,x'-\bar x')|_{-1}^2.
\end{align*}
This implies that $z+\overline \Psi$ has a minimum at $\bar x=(\bar x_N,\bar x')$, where 
$$\overline \Psi(x):=\Psi(x_N) -\langle D_{-1}z(\bar x_N,\bar x'),(0, x'-\bar x') \rangle_{-1}+ \frac \delta 2 |(x_N-\bar x_N,0)|_{-1}^2+ \frac C 2 \left (1+ \frac 1 \delta \right )|(0,x'-\bar x')|_{-1}^2.$$
We notice that since $x=(x_N,x')\in X$, we can write
$$\overline \Psi(x)=\Psi(x_N) -\langle BD_{-1}z(\bar x),(0, x'-\bar x') \rangle_X+ \frac \delta 2 |(x_N-\bar x_N,0)|_{-1}^2+ \frac C 2 \left (1+ \frac 1 \delta \right )|(0,x'-\bar x')|_{-1}^2.$$
Hence, we have
$D\overline \Psi( x)=D\Psi(x_N)-Q_NBD_{-1}z(x) + \delta BP_N (x-\bar x) +C \left (1+ \frac 1 \delta \right ) BQ_N(x-\bar x),
$
so that
\begin{align*}
&D\overline \Psi(\bar x)=D\Psi(\bar x_N)-Q_NBD_{-1}z(\bar x), \quad D^2 \overline \Psi(\bar x)=D^2\Psi(\bar x_N)+\delta BP_N + C \left (1+ \frac 1 \delta \right )BQ_N.
\end{align*}
Here $D\overline \Psi(x), D^2\Psi(x), D\Psi( x_N), D^2\Psi( x_N)$ denote the standard Fr\'echet derivatives in $X$.
% but seen as a function in $X_{-1}$ so that we can compute them as follows:
%$$D\Psi( x_N)=(D_{x_N}\Psi(x_N),0), \quad D^2\Psi(x_N)=\begin{bmatrix}
%D_{x_N^2}^2\Psi( x_N) & 0 \\
%0 & 0
%\end{bmatrix},$$
Recalling that $D\Psi( x_N)=P_N D\Psi( x_N) ,D^2\Psi( x_N)=P_N D^2\Psi( x_N) P_N$, $D\Psi( x_N)$ and $D^2\Psi( x_N)$ are also first and second order derivatives of $\Psi$ as a function on $X^N \cong \mathbb R^N$, where $X^N$ is considered as a subspace of $X$.

Since $z$ is a viscosity supersolution, we now have
\begin{align*}
\rho z(\bar x)& + \langle \tilde A^*D \Psi(\bar x_N),\bar x \rangle_X -\langle \tilde A^*Q_NBD_{-1}z(\bar x),\bar x\rangle_X \\   & +  H \Big (\bar x,-D \Psi(\bar x_N) +Q_NBD_{-1}z(\bar x),-D^2\Psi(\bar x_N)-\delta BP_N - C \left (1+ \frac 1 \delta \right )BQ_N \Big ) \   \geq - \gamma,
\end{align*}
i.e.
\begin{align*}
\rho z(\bar x)& + \langle \tilde A^*D \Psi(\bar x_N),\bar x \rangle_X-\langle \tilde  A^* Q_NBD_{-1}z(\bar x),\bar x\rangle_X\\
&  +  \tilde H \Big (\bar x,-D_{x_0} \Psi(\bar x_N) +(Q_NBD_{-1}z(\bar x))_0,-D_{x_0^2}^2\Psi(\bar x_N)-\delta (BP_N)_{00} - C \left (1+ \frac 1 \delta \right )(BQ_N)_{00}  \Big) \geq  - \gamma.
\end{align*}
By \eqref{eq:A*B^1/2} we can write $\langle \tilde  A^* Q_NBD_{-1}z(\bar x),\bar x\rangle_X=\langle \tilde  A^*B^{1/2} Q_NB^{1/2}D_{-1}z(\bar x), \bar x\rangle_X$ so that
\begin{align*}
\rho z(\bar x)& + \langle \tilde A^*D \Psi(\bar x_N),\bar x \rangle_X\\
&  \quad +  \tilde H \Big (\bar x,-D_{x_0} \Psi(\bar x_N) +(Q_NBD_{-1}z(\bar x))_0,-D_{x_0^2}^2\Psi(\bar x_N)-\delta (BP_N)_{00} - C \left (1+ \frac 1 \delta \right )(BQ_N)_{00}  \Big)\\
& \geq  - \gamma - C \left  |Q_NB^{1/2}D_{-1}z(\bar x)\right | |\bar x|.
\end{align*}
By definition of $\tilde H$ we now have
\begin{small}
\begin{align*}
\rho z(\bar x)&+ \langle \tilde A^*D \Psi(\bar x_N),\bar x \rangle_X     +  \tilde H(\bar x,-D_{x_0} \Psi(\bar x_N) ,-D_{x_0^2}^2\Psi(\bar x_N) )\\
& \geq   - \gamma - C \left  |Q_NB^{1/2}D_{-1}z(\bar x)\right | |\bar x|  + \bar x_0 \cdot (Q_NBD_{-1}z(\bar x))_0  \\
&\quad  - \sup_{u\in U} \Bigg \{ - b_0\left (\bar x_0,\int_{-d}^0 a_1(\xi)\bar x_1(\xi)\,d\xi ,u \right) \cdot (Q_NBD_{-1}z(\bar x))_0  \Bigg \} \\
&\quad  +  \frac{1}{2} \tr \left [ \sigma_0 \left (\bar x_0,\int_{-d}^0 a_2(\xi)\bar x_1(\xi)\,d\xi \right)\sigma_0 \left ( \bar x_0,\int_{-d}^0 a_2(\xi)\bar x_1(\xi)\,d\xi\right)^T  \left (-\delta BP_N -C \left (1+ \frac 1 \delta \right )BQ_N \right)_{00}  \right]\\
& \geq   - \gamma - C \left  |Q_NB^{1/2}D_{-1}z(\bar x)\right | |\bar x|  + \bar x_0 \cdot (Q_NBD_{-1}z(\bar x))_0  \\
& \quad  -  C \left (1+ \frac 1 \delta \right ) \tr \left [ \sigma_0 \left (\bar x_0,\int_{-d}^0 a_2(\xi)\bar x_1(\xi)\,d\xi \right)\sigma_0 \left ( \bar x_0,\int_{-d}^0 a_2(\xi)\bar x_1(\xi)\,d\xi\right)^T  (BQ_N)_{00}  \right] - C_R \delta  \\
&=: -\gamma +f_N(\bar x)- C_R \delta.
\end{align*}\end{small}
We point out that for every fixed $\delta>0$, the functions $f_N$ are continuous, locally uniformly (in $N$) bounded in $X$ and for every $x\in X, f_N(x)\to 0$ as $N\to \infty$.

Thus we have shown that for every fixed $x' \in X^{N,\perp}$ such that $(0, x') \in X$ the $x_N$-function $z_{x'}:=z(\cdot,x')$ is a viscosity supersolution of
\begin{align*}
\rho z_{x'}(x_N  )&- \langle \tilde A^*D z_{x'}(x_N ),(x_N ,x') \rangle_X     +  \tilde H( (x_N ,x') ,D_{x_0} z_{x'}(x_N ) ,D_{x_0^2}^2z_{x'}(x_N ) ) = -\gamma - C_R \delta +f_N(x_N , x'), 
\end{align*}
that is
\begin{align}\label{eq:proof_eq_z(,y)}
\rho z_{x'}(\cdot  )&- \langle \tilde A^*D z_{x'}(x_N ),(x_N ,x') \rangle_X     +  \tilde H( (x_N ,x') ,D_{x_0} z_{x'}(x_N ) )  -\frac{1}{2} \tr  \Bigg [ \sigma_0 \left ((x_N ,x')_0,\int_{-d}^0 a_2(\xi)(x_N ,x')_1(\xi)\,d\xi \right) \nonumber \\
& \qquad\quad
\times  \sigma_0 \left ( (x_N ,x')_0,\int_{-d}^0 a_2(\xi)(x_N ,x')_1(\xi)\,d\xi\right)^T  D_{x_0^2}^{2}  z_{x'}(x_N )    \Bigg]  = -\gamma - C_R \delta +f_N(x_N , x'). 
\end{align}
%where as before $Dz_{x'}=(D_{x_N}z,0)$ and $D_{x_0}z_{x'}=(D_{x_N}z,0)_0$ is the $x_0$-component of  $Dz_{x'}$.\\
Since this is an equation on $\mathbb R^N$, $z_{x'} \in C^{1,1}(\mathbb R^N)$, $\tilde A^*D z_{x'}$ is continuous and all the terms above are well defined, the left-hand side of \eqref{eq:proof_eq_z(,y)} is greater than or equal to the right-hand side for a.e. $x_N \in \mathbb R^N$.

\noindent
\textbf{Step 3:} Let $R>0$ and let $x' \in X_{N}^\perp$ be such that $(0,x') \in X, |x'|_X\leq R$ and consider the $x_N$-function $z_{x'}^{\eta,k}:=z^{\eta,k}(\cdot,x')$, $k\leq N$. We will show that this function satisfies a perturbed HJB equation.

Applying the $X_k$-convolution to both sides of \eqref{eq:proof_eq_z(,y)} we have
\begin{align*}
\rho z_{x'}^{\eta,k}(x_N)&+ \int_{\mathbb R^k} \Bigg \{ - \langle \tilde A^*D z_{x'}(\hat x^1, ...,\hat x^k, x^{k+1},...,x^N),(\hat x^1, ...,\hat x^k, x^{k+1},...,x^N,x') \rangle_X \nonumber \\
&   \quad   +  \tilde H( (\hat x^1, ...,\hat x^k, x^{k+1},...,x^N,x') ,D_{x_0} z_{x'}(\hat x^1, ...,\hat x^k, x^{k+1},...,x^N) ) \nonumber \\
& \quad  -1/2 \tr  \Big [ \sigma_0 \left ((\hat x^1, ...,\hat x^k, x^{k+1},...,x^N,x')_0,\int_{-d}^0 a_2(\xi)(\hat x^1, ...,\hat x^k, x^{k+1},...,x^N,x')_1(\xi)\,d\xi \right) \nonumber \\
& \quad  \times  \sigma_0 \left ( (\hat x^1, ...,\hat x^k, x^{k+1},...,x^N,x')_0,\int_{-d}^0 a_2(\xi)(\hat x^1, ...,\hat x^k, x^{k+1},...,x^N,x')_1(\xi)\,d\xi\right)^* \nonumber \\
&\quad  \times   D_{x_0^2}^{2}  z_{x'}(\hat x^1, ...,\hat x^k, x^{k+1},...,x^N)    \Big ] \Bigg \} \prod_{i=1}^k \rho_{\eta_i}(x^i-\hat x^i)    d\hat x^1... d\hat x^k \nonumber \\
&   \geq -\gamma -C_R \delta   +\int_{\mathbb R^k} f_N(\hat x^1, ...,\hat x^k, x^{k+1},...,x^N, x') \prod_{i=1}^k \rho_{\eta_i}(x^i-\hat x^i) d\hat x^1... d\hat x^k 
\end{align*}
for a.e. $x_N \in \mathbb R^N$.
Note that, since ${\rm supp}( \rho_{\eta_i} )\subset [-\eta_i,\eta_i]$,  we are effectively integrating only with respect to $(\hat {x}^1,...\hat x^k) \in X_k \cong \mathbb R^k$ such that $|\hat x^i-x^i| \leq \eta_i=\eta \sqrt{\lambda_i}/2^{i}$. Then for such $\hat x_i$, recalling that $e_i=f_i/ \sqrt{\lambda_i}$ with $\{f_i\}$ being an orthonormal basis of $X$ and by setting $\hat x=(\hat x^1, ...,\hat x^k, x^{k+1},...,x^N,x')$, $ x=(x_N,x')=(x^1, ...,x^k, x^{k+1},...,x^N,x'),$ we have $|\hat x-x|_X^2=\sum_{i \in \mathbb N} |\hat x^i-x^i|^2 /\lambda_i   \leq \eta^2.$
Moreover, since $|\tilde A^*D z_{x'}(x)|_X=|\tilde A^*B^{1/2}B^{1/2}P_ND_{-1} z(x)|_X\leq C|D_{-1} z(x)|_{-1}\leq C$. It then follows that $|\langle \tilde A^*D z_{x'}(\hat x^1, ...,\hat x^k, x^{k+1},...,x^N),x  \rangle_X - \langle \tilde A^*D z_{x'}(\hat x^1, ...,\hat x^k, x^{k+1},...,x^N),\hat x \rangle_X|\leq C_R \eta$ for some constant $C_R$ if in addition $|x_N|_X\leq R$. Arguing similarly for the other terms, by Lemma \ref{lemma:properties_H} and the Lipschitzianity of $Dz$ as a map from $X_{-1}$ to $X$, we have that there is a modulus $\omega_R$ such that
\begin{align*}
\rho z_{x'}^{\eta,k}(x_N)&+ \int_{\mathbb R^k} \Bigg \{ - \langle \tilde A^*D z_{x'}(\hat x^1, ...,\hat x^k, x^{k+1},...,x^N),(x_N,x') \rangle_{\red{X}}  +  \tilde H( (x_N,x') ,D_{x_0} z_{x'}(x_N) ) \nonumber \\
&\quad   -1/2 \tr  \Big [ \sigma_0 \left ((x_N,x')_0,\int_{-d}^0 a_2(\xi)(x_N,x')_1(\xi)\,d\xi \right) \sigma_0 \left ( (x_N,x')_0,\int_{-d}^0 a_2(\xi)(x_N,x')_1(\xi)\,d\xi\right)^T \nonumber \\
& \quad  \times   D_{x_0^2}^{2}  z_{x'}(\hat x^1, ...,\hat x^k, x^{k+1},...,x^N)    \Big ] \Bigg \} \prod_{i=1}^k \rho_{\eta_i}(x^i-\hat x^i)    d\hat x^1... d\hat x^k \nonumber \\
&   \geq -\gamma -C_R \delta   -\int_{\mathbb R^k} f_N(\hat x^1, ...,\hat x^k, x^{k+1},...,x^N, x') \prod_{i=1}^k \rho_{\eta_i}(x^i-\hat x^i) d\hat x^1... d\hat x^k - \omega_R( \eta)
\end{align*}
for a.e. $x_N \in \mathbb R^N$ such that $|x_N|\leq R$.

We now notice that by  \eqref{eq:proof_derivative_convolution_cauchy}, for every $\in X$
\begin{align*}
|D z^{\eta,k}(x)-D z(x)|& =|BD_{-1} z^{\eta,k}(x)-BD_{-1} z(x)|\leq C |D_{-1} z^{\eta,k}(x)-D_{-1} z(x)|_{-1}\\
& \leq C \sum_{i=2}^{k} |D_{-1}z^{\eta,i}(x)-D_{-1}z^{\eta,i-1}(x)|_{-1} + C_R  |D_{-1}z^{\eta,1}(x)-D_{-1}z(x)|_{-1}
\leq C \eta.
\end{align*}
Using this inequality and the fact that the convolutions of derivatives are the derivatives of the convolutions, we now have
\begin{align*}
\rho & z_{x'}^{\eta,k}(x_N)-  \langle \tilde A^*D z^{\eta,k}_{x'}(x_N),(x_N,x') \rangle_X  +  \tilde H( (x_N,x') ,D_{x_0} z^{\eta,k}_{x'}(x_N) ) \nonumber \\
& \quad  -1/2 \tr  \Big [ \sigma_0 \left ((x_N,x')_0,\int_{-d}^0 a_2(\xi)(x_N,x')_1(\xi)\,d\xi \right) \sigma_0 \left ( (x_N,x')_0,\int_{-d}^0 a_2(\xi)(x_N,x')_1(\xi)\,d\xi\right)^T  D_{x_0^2}^{2}  z_{x'}^{\eta,k}(x_N)    \Big ]  \nonumber \\
&   \geq -\gamma -C_R \delta   -\int_{\mathbb R^k} f_N(\hat x^1, ...,\hat x^k, x^{k+1},...,x^N, x') \prod_{i=1}^k \rho_{\eta_i}(x^i-\hat x^i) d\hat x^1... d\hat x^k - \omega_R (\eta)
\end{align*}
for some modulus $\omega_R$. Therefore we proved that if $x' \in X_{N}^\perp$ is such that $(0,x') \in X, |x'|_X\leq R$, then
\begin{align}\label{eq:proof_eq_z_eta_k(,y)}
\rho z_{x'}^{\eta,k}(x_N )&- \langle \tilde A^*Dz_{x'}^{\eta,k}(x_N),(x_N,x') \rangle_X     +  \tilde H( (x_N,x'),D_{x_0} z_{x'}^{\eta,k}(x_N) ,D^{2}_{x_0^2} z_{x'}^{\eta,k}(x_N) ) \nonumber \\
&  \geq  -\gamma -C_R \delta - \omega_R(\eta)  -\int_{\mathbb R^k} f_N(\hat x^1, ...,\hat x^k, x^{k+1},...,x^N, x') \prod_{i=1}^k \rho_{\eta_i}(x^i-\hat x^i) d\hat x^1... d\hat x^k 
\end{align}
for a.e. $x_N \in \mathbb R^N$ such that $|x_N|\leq R$.
Observe that since $z_{x'}^{\eta,k} \in C^{1,1}(\mathbb R^N)$, it is well known that the inequality holds in the viscosity sense in $\mathbb R^N$ (see e.g. \cite[Theorem I.2]{lions_1983}), that is $z_{x'}^{\eta,k}$ is a viscosity supersolution of \eqref{eq:proof_eq_z_eta_k(,y)}.

\noindent
\textbf{Step 4:} We let $N \to \infty$ and prove that $z^{\eta,k}$ is a viscosity supersolution of a perturbed HJB equation on $X$.\\
Let $\bar x=(\bar x_N,\bar x') \in X$ be a minimum of $z^{\eta,k}+\Psi$ for a test function $\Psi=\phi+g$ (defined on $X$)  with $\phi \in \Phi$ and $g \in \mathcal G$. Then $\bar x_N$ is a minimum for $z_{\bar x'}^{\eta,k}(\cdot)+\Psi(\cdot,\bar x')=z^{\eta,k}(\cdot,\bar x')+\Psi(\cdot,\bar x')$. 

Since $z^{\eta,k}\in C^{1,1}(X_{-1}), D\Psi(\bar x)=Dz^{\eta,k}(\bar x)=BD_{-1}z^{\eta,k}(\bar x)\in D(\tilde A^*)$. Moreover, we have 
\[
D\Psi(\bar x_N,\bar x')=P_ND\Psi(\bar x)=P_NDz^{\eta,k}(\bar x)=BP_ND_{-1}z^{\eta,k}(\bar x)\quad\mbox{and}\quad D^2\Psi(\bar x_N,\bar x')=P_ND^2\Psi(\bar x)P_N.
\] 
Thus, as $|P_ND_{-1}z^{\eta,k}(\bar x)|_{-1}\to 0$,
\begin{equation}\label{eq:convatilde}
\tilde A^*D\Psi(\bar x_N,\bar x')=\tilde A^*P_ND\Psi(\bar x)\tilde A^*B^{\frac{1}{2}}B^{\frac{1}{2}}P_ND_{-1}z^{\eta,k}(\bar x)\to \tilde A^*BD_{-1}z^{\eta,k}(\bar x)=\tilde A^*D\Psi(\bar x).
\end{equation}
Since
$z_{\bar x'}^{\eta,k}$ is a viscosity supersolution of \eqref{eq:proof_eq_z_eta_k(,y)}, we have
\begin{align*}
\rho z^{\eta,k}(\bar x_N, \bar x' )&+ \langle \tilde A^*D \Psi (\bar x_N,\bar x'),\bar x \rangle_X     +  \tilde H( \bar x,-D_{x_0} \Psi (\bar x_N,\bar x') ,-D_{x_0^2}^2 \Psi(\bar x_N,\bar x') ) \nonumber \\
&  \geq  -\gamma -C_R \delta - \omega_R(\eta)  -\int_{\mathbb R^k} f_N(\hat x_1, ...,\hat x_k, \bar x_{k+1},...,\bar x_N, \bar x') \prod_{i=1}^k \rho_{\eta_i}(\bar x_i-\hat x_i) d\hat x_1... d\hat x_k.
\end{align*}
We recall that $D_{x_0}\Psi(\bar x_N,\bar x')=P_{x_0}P_ND\Psi(\bar x)$ and $D_{x_0^2}^2 \Psi(\bar x_N,\bar x')=P_{x_0}P_ND^2\Psi(\bar x)P_NP_{x_0}$.
Therefore, letting $N \to \infty$, using \eqref{eq:convatilde}, the fact that $f_N(x) \to 0$ and the dominated convergence theorem, we obtain
\begin{align*}
\rho z^{\eta,k}(\bar x )&+ \langle \tilde A^*D \Psi (\bar x),\bar x \rangle_X     +  \tilde H( \bar x,-D_{x_0} \Psi (\bar x) ,-D_{x_0^2}^2 \Psi(\bar x) )   \geq  -\gamma -C_R \delta - \omega_R(\eta)  .
\end{align*}
We can now let $\delta \to 0$ to get
\begin{align*}
\rho z^{\eta,k}(\bar x )&+ \langle \tilde A^*D \Psi (\bar x),\bar x \rangle_X     +  \tilde H( \bar x,-D_{x_0} \Psi (\bar x) ,-D_{x_0^2}^2 \Psi(\bar x) )   \geq  -\gamma - \omega_R(\eta).  
\end{align*}
In particular, we proved that for every $k$, the function $z^{\eta,k}$ is a viscosity supersolution in $X$ of
\begin{align}\label{eq:proof_z_eta_k_viscosity_supersolution}
\rho z^{\eta,k}(x )&- \langle \tilde A^*D z^{\eta,k}, x \rangle_X     +  \tilde H( x,D_{x_0} z^{\eta,k} (x) ,D_{x_0^2}^2 z^{\eta,k}(x) )  = -\gamma - \omega_R(\eta)  .
\end{align}
\textbf{Step 5:} We use consistency of viscosity solutions to obtain that $z^\eta$ is a viscosity supersolution of a perturbed HJB equation.
Since for every $k$, the function $z^{\eta,k}$ is a viscosity supersolution of \eqref{eq:proof_z_eta_k_viscosity_supersolution} and, by  \eqref{eq:proof_convergence_convolution}, $z^{\eta,k}$ converges uniformly to $z^\eta$, it follows from consistency of viscosity solutions, \cite[Theorem 3.41]{fgs_book}, that $z^{\eta}$ is a viscosity supersolution of
\begin{align*}
\rho z^{\eta}(x )&- \langle \tilde A^*D z^{\eta}, x \rangle_X     +  \tilde H( x,D_{x_0} z^{\eta} (x) ,D_{x_0^2}^2 z^{\eta}(x) )  = -\gamma - \omega_R(\eta).
\end{align*}
This completes the proof of the lemma.
\end{proof}
Applying Lemma \ref{lemma:z_eta} to $z=\tilde V_\epsilon$, we obtain that for every $\eta>0$ there exist $\tilde V_\epsilon^\eta \in \mathcal D$ such that 
\begin{equation}\label{eq:convergence_convolution_w_eps_eta}
|\tilde V_\epsilon-\tilde V_\epsilon^\eta|\leq C_\epsilon \eta ,  \quad |D_{-1}\tilde V_\epsilon-D_{-1}\tilde V_\epsilon^\eta|_{-1}\leq C_\epsilon \eta
\end{equation}
for some $C_\epsilon>0$ (independent of $\eta$) and such that $\tilde V_\epsilon^\eta$ is a viscosity supersolution of
\begin{align}\label{eq:w_eps_eta_supersolution}
\rho \tilde V_\epsilon^\eta(x)-\langle  \tilde A^*D\tilde V_\epsilon^\eta(x),x \rangle_X +\tilde H \left (x,D_{x_0}\tilde V_\epsilon^\eta(x),D^2_{x_0^2}\tilde V_\epsilon^\eta(x) \right) = -\gamma(\epsilon) - \omega_{R,\epsilon}(\eta), \quad   \forall x \in B_R.
\end{align}
Since $\tilde V_\epsilon^\eta \in \mathcal D$ all terms appearing in \eqref{eq:w_eps_eta_supersolution} are well defined. Thus we will prove that $\tilde V_\epsilon^\eta$ satisfies \eqref{eq:w_eps_eta_supersolution} pointwise as inequality.

\begin{lemma}\label{lemma:w_eta_eps_classical_supersol_perturbed_HJB}
Let Assumptions \ref{hp:state}, \ref{hp:cost}, \ref{hp:discount} and \ref{hp:convexity} hold.
For every $R>0$ we have
\begin{align*}
\rho \tilde V_\epsilon^\eta(x)-\langle  \tilde A^*D\tilde V_\epsilon^\eta(x),x \rangle_X +\tilde H \left (x,D_{x_0}\tilde V_\epsilon^\eta(x), D^2_{x_0^2}\tilde V_\epsilon^\eta(x) \right) \geq -\gamma(\epsilon) - \omega_{R,\epsilon}(\eta) \quad   \forall x \in B_R.
\end{align*}
\end{lemma}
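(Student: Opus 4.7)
My plan is to upgrade the viscosity supersolution inequality from Lemma~\ref{lemma:z_eta} (applied to $z=\tilde V_\epsilon$) to a pointwise inequality at each $\bar x\in B_R$ by testing with a function tailored to match the derivatives of $\tilde V_\epsilon^\eta$ at $\bar x$. The idea exploits that, although $\tilde V_\epsilon^\eta$ itself does not belong to the admissible class $\Phi$, its membership in $\mathcal D$ together with Lemma~\ref{lemma:D^2_x_0^2phi_in_D} gives a classical $C^2$-expansion in $x_0$ (with uniformly continuous Hessian) and a $C^{1,1}$-expansion in the $|\cdot|_{-1}$-direction. These two expansions provide enough regularity to hand-construct a valid test function $\Psi_\kappa\in\Phi$ that matches $-D\tilde V_\epsilon^\eta(\bar x)$ and $-D^2_{x_0^2}\tilde V_\epsilon^\eta(\bar x)+\kappa I$ at $\bar x$, with an extra $\kappa I$ slack that is removed at the end by continuity of $\tilde H$.

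Concretely, fix $\bar x\in B_R$, let $\kappa>0$, pick smooth cutoffs $\chi\in C^\infty([0,\infty);[0,1])$ with $\chi\equiv 1$ on $[0,1]$ and $\chi\equiv 0$ on $[2,\infty)$ and a smooth bounded nondecreasing $\zeta$ with $\zeta(s)=s$ near $0$, and a constant $\nu$ strictly larger than the Lipschitz constant of $D_{-1}\tilde V_\epsilon^\eta$. Set
\[
\Psi_\kappa(x)=-\tilde V_\epsilon^\eta(\bar x)-\langle D\tilde V_\epsilon^\eta(\bar x),x-\bar x\rangle_X
+\nu\,\zeta\bigl(|(0,x_1-\bar x_1)|_{-1}^2\bigr)
+\Bigl[\tfrac{\kappa}{2}|x_0-\bar x_0|^2-\tfrac{1}{2}\langle D^2_{x_0^2}\tilde V_\epsilon^\eta(\bar x)(x_0-\bar x_0),x_0-\bar x_0\rangle\Bigr]\chi(|x_0-\bar x_0|^2).
\]
Direct computation gives $D\Psi_\kappa(\bar x)=-D\tilde V_\epsilon^\eta(\bar x)$ and $D^2_{x_0^2}\Psi_\kappa(\bar x)=\kappa I-D^2_{x_0^2}\tilde V_\epsilon^\eta(\bar x)$, so $-D^2_{x_0^2}\Psi_\kappa(\bar x)=D^2_{x_0^2}\tilde V_\epsilon^\eta(\bar x)-\kappa I$. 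To check that $\bar x$ is a strict local minimum of $\tilde V_\epsilon^\eta+\Psi_\kappa$, in the neighborhood where both cutoffs equal $1$ I combine the two Taylor expansions: the $x_0$-expansion gives $\tilde V_\epsilon^\eta(x)-\tilde V_\epsilon^\eta(\bar x_0,x_1)-D_{x_0}\tilde V_\epsilon^\eta(\bar x_0,x_1)\cdot(x_0-\bar x_0)=\tfrac{1}{2}\langle D^2_{x_0^2}\tilde V_\epsilon^\eta(\bar x_0,x_1)(x_0-\bar x_0),x_0-\bar x_0\rangle+o(|x_0-\bar x_0|^2)$ uniformly in $x_1$, and the $C^{1,1}(X_{-1})$ property controls the $x_1$-residue by $\tfrac{C_{\epsilon,\eta}}{2}|(0,x_1-\bar x_1)|_{-1}^2$. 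The cross term between $x_0$ and $x_1$ is handled via the Cauchy--Schwarz inequality for the positive operator $B$, which yields $|B_{01}(x_1-\bar x_1)|\le|B_{00}|^{1/2}|(0,x_1-\bar x_1)|_{-1}$, followed by a Young-type splitting; this lets $\nu\langle B_{11}(x_1-\bar x_1),x_1-\bar x_1\rangle$ absorb the $|(0,x_1-\bar x_1)|_{-1}$-contributions and $\tfrac{\kappa}{2}|x_0-\bar x_0|^2$ absorb the $o(|x_0-\bar x_0|^2)$ contributions for $|x-\bar x|_X$ small.

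Next I verify $\Psi_\kappa\in\Phi$. Boundedness of $\Psi_\kappa$, $D\Psi_\kappa$, $D^2\Psi_\kappa$ follows from boundedness of $\zeta$, $\chi$ and their derivatives. Weak sequential lower semicontinuity holds because the $x_1$-term is weakly continuous by compactness of $B_{11}$ and the $x_0$-terms depend weakly-continuously on the finite-dimensional projection. For the crucial uniform continuity of $\tilde A^*D\Psi_\kappa$, the constant piece $-\tilde A^*D\tilde V_\epsilon^\eta(\bar x)$ is well-defined since $D\tilde V_\epsilon^\eta(\bar x)=BD_{-1}\tilde V_\epsilon^\eta(\bar x)\in\mathrm{Range}(B^{1/2})=D(\tilde A^*)$; the $x_0$-piece has the form $(p_0(x_0),0)$ with $p_0$ bounded $C^\infty$, and $\tilde A^*(p_0,0)=(-p_0,0)$; the $x_1$-piece produces derivatives in the direction $(0,2\nu\zeta'(\cdot)B_{11}(x_1-\bar x_1))$, and one checks directly from \eqref{eq:tilde_A_inverse} that $B_{11}$ maps $L^2$ into $\{p_1\in W^{1,2}:p_1(-d)=0\}$, so $\tilde A^*$ applies and the resulting expression is uniformly continuous using $\tilde A^*B^{1/2}\in\mathcal L(X)$ and the support and Lipschitz properties of $\zeta'$.

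Now applying the viscosity supersolution property \eqref{eq:w_eps_eta_supersolution} to the minimum at $\bar x$ of $\tilde V_\epsilon^\eta+\Psi_\kappa$ (with $g=0\in\mathcal G$) yields
\[
\rho\tilde V_\epsilon^\eta(\bar x)-\langle\tilde A^*D\tilde V_\epsilon^\eta(\bar x),\bar x\rangle_X
+\tilde H\bigl(\bar x,D_{x_0}\tilde V_\epsilon^\eta(\bar x),D^2_{x_0^2}\tilde V_\epsilon^\eta(\bar x)-\kappa I\bigr)\ge -\gamma(\epsilon)-\omega_{R,\epsilon}(\eta).
\]
Letting $\kappa\to 0^+$ and using continuity of $\tilde H$ on bounded subsets of $X\times X\times S(X)$ (Lemma~\ref{lemma:properties_H}(i)), the Hessian argument converges to $D^2_{x_0^2}\tilde V_\epsilon^\eta(\bar x)$, which gives the claim. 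The main technical obstacle is twofold: constructing $\Psi_\kappa$ as a genuinely admissible test function in $\Phi$ (the uniform continuity of $\tilde A^*D\Psi_\kappa$ being the most delicate point), and balancing the parameters $\nu$ and $\kappa$ against the moduli coming from the $x_0$-$x_1$ cross-terms so that $\bar x$ is truly a local minimum of $\tilde V_\epsilon^\eta+\Psi_\kappa$ and not merely a critical point.
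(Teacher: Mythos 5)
Your proposal is correct and follows essentially the same route as the paper: combine the $x_0$-Taylor expansion (from Lemma~\ref{lemma:D^2_x_0^2phi_in_D}) with a $C^{1,1}(X_{-1})$-expansion in the $x_1$-direction to build a test function in $\Phi$ that touches $\tilde V_\epsilon^\eta$ from below at $\bar x$ with gradient $-D\tilde V_\epsilon^\eta(\bar x)$ and Hessian $-D^2_{x_0^2}\tilde V_\epsilon^\eta(\bar x)$ up to a slack that is removed at the end via \eqref{eq:w_eps_eta_supersolution} and continuity of $\tilde H$. Two remarks on where your version diverges from the paper's. First, your cut-offs $\chi,\zeta$ make the test function genuinely bounded with bounded derivatives, which is more faithful to the definition of $\Phi$ than the paper's unbounded quadratic $\phi$; this is a real (if minor) improvement in rigor. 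Second, and this is the one point you must fix: you declare $\nu$ to be a fixed constant slightly above the Lipschitz constant of $D_{-1}\tilde V_\epsilon^\eta$, but after Young-splitting the cross term $\bigl|D_{x_0}\tilde V_\epsilon^\eta(\bar x_0,x_1)-D_{x_0}\tilde V_\epsilon^\eta(\bar x)\bigr|\,|x_0-\bar x_0|\le \frac{\kappa}{4}|x_0-\bar x_0|^2+\frac{C^2}{\kappa}|(0,x_1-\bar x_1)|_{-1}^2$, the penalization coefficient must dominate $\frac{C^2}{\kappa}$, so $\nu=\nu(\kappa)\to\infty$ as $\kappa\to 0^+$ (compare the paper's $\frac{C}{\delta}$ coefficient); you allude to this balancing in your last paragraph but the initial fixed choice of $\nu$ is inconsistent with it. Once $\nu$ is taken $\kappa$-dependent the argument closes. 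You also reverse the order of decomposition relative to the paper (first $x_0\to\bar x_0$ at frozen $x_1$, then $x_1\to\bar x_1$), which forces you to replace $D_{x_0}\tilde V_\epsilon^\eta(\bar x_0,x_1)$ and $D^2_{x_0^2}\tilde V_\epsilon^\eta(\bar x_0,x_1)$ by their values at $\bar x$; this works because $D_{-1}\tilde V_\epsilon^\eta$ is $|\cdot|_{-1}$-Lipschitz and $D^2_{x_0^2}\tilde V_\epsilon^\eta$ is uniformly continuous on $X$, but the paper's ordering (first $x_1\to\bar x_1$, then $x_0\to\bar x_0$) avoids the second replacement entirely.
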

\begin{proof}
We go back to the standard notation from Subsection \ref{subsec:infinite_dimensional_framework}, that is $x=(x_0,x_1)$ means $x_0\in\mathbb R^n,x_1\in L^2$.
Fix $\bar x=(\bar x_0, \bar x_1) \in B_R \subset X$. Since $\tilde V_\epsilon^\eta \in C^{1,1}(X_{-1})$, using Young's inequality, for every $0 <\delta < 1$ we have
\begin{align*}
\tilde V_\epsilon^\eta(x_0,x_1)-\tilde V_\epsilon^\eta(x_0,\bar x_1) & \geq \langle D_{-1}\tilde V_\epsilon^\eta(x_0,\bar x_1),(0,x_1-\bar x_1) \rangle_{-1} - C |(0,x_1-\bar x_1)|_{-1}^2 \\
& \geq \langle D_{-1} \tilde V_\epsilon^\eta(\bar x),(0,x_1-\bar x_1) \rangle_{-1} -C |(x_0-\bar x_0,0)|_{-1}|(0,x_1-\bar x_1)|_{-1} - C |(0,x_1-\bar x_1)|_{-1}^2 \\
& \geq \langle D_{-1} \tilde V_\epsilon^\eta(\bar x),(0,x_1-\bar x_1) \rangle_{-1} -\delta |(x_0-\bar x_0,0)|_{-1}^2-\frac{C}{\delta}|(0,x_1-\bar x_1)|_{-1}^2 \\
& = \langle D\tilde V_\epsilon^\eta(\bar x),(0,x_1-\bar x_1) \rangle_X -\delta |(x_0-\bar x_0,0)|_{-1}^2-\frac{C}{\delta}|(0,x_1-\bar x_1)|_{-1}^2\\
& = \langle D_{x_1}\tilde V_\epsilon^\eta(\bar x),x_1-\bar x_1 \rangle_{L^2} -\delta |(x_0-\bar x_0,0)|_{-1}^2 -\frac{C}{\delta}|(0,x_1-\bar x_1)|_{-1}^2
\end{align*}
for every $x=(x_0,x_1) \in X$. Since $\tilde w_\epsilon^\eta \in \mathcal D$, by Lemma \ref{lemma:D^2_x_0^2phi_in_D}, using the second order Taylor expansion with respect to $x_0$, we have
\begin{align*} 
\tilde V_\epsilon^\eta(x_0,\bar x_1)- \tilde V_\epsilon^\eta(\bar x_0,\bar x_1) \geq D_{x_0}\tilde V_\epsilon^\eta(\bar x)\cdot(x_0-\bar x_0)+\frac{1}{2}D^2_{x_0^2}\tilde V_\epsilon^\eta(\bar x)(x_0-\bar x_0) \cdot (x_0-\bar x_0)  - \delta |(x_0-\bar x_0,0)|_{-1}^2
\end{align*}
when $|(x_0-\bar x_0,0)|_{-1}^2$ is small (and we used that the norms $|(\cdot,0)|$ and $|(\cdot,0)|_{-1}$ are equivalent on $\mathbb{R}^n$).
Adding the last two inequalities we now get
\begin{align*}
\tilde V_\epsilon^\eta(x)&-\tilde V_\epsilon^\eta(\bar x)  \geq D_{x_0}\tilde V_\epsilon^\eta(\bar x)\cdot(x_0-\bar x_0) +\langle D_{x_1}\tilde V_\epsilon^\eta(\bar x),x_1-\bar x_1 \rangle_{L^2} +\frac{1}{2}D^2_{x_0^2}\tilde V_\epsilon^\eta(\bar x)(x_0-\bar x_0)\cdot (x_0-\bar x_0) \\ 
& \qquad\qquad\qquad\qquad\qquad\qquad  -2\delta |(x_0-\bar x_0,0)|_{-1}^2-\frac{C}{\delta}|(0,x_1-\bar x_1)|_{-1}^2  \\
& =  \langle D\tilde V_\epsilon^\eta(\bar x), x-\bar x \rangle_X +\frac{1}{2}D^2_{x_0^2}\tilde V_\epsilon^\eta(\bar x)(x_0-\bar x_0)\cdot (x_0-\bar x_0)
-2\delta |(x_0-\bar x_0,0)|_{-1}^2-\frac{C}{\delta}|(0,x_1-\bar x_1)|_{-1}^2.
\end{align*}
Defining  $\phi $ by
\[
\phi(x)=-\tilde V_\epsilon^\eta(\bar x) - \langle D\tilde V_\epsilon^\eta(\bar x), x-\bar x \rangle_X -\frac{1}{2}D^2_{x^2_0}\tilde V_\epsilon^\eta(\bar x)(x_0-\bar x_0)\cdot (x_0-\bar x_0) +2\delta |(x_0-\bar x_0,0)|_{-1}^2+\frac{C}{\delta}|(0,x_1-\bar x_1)|_{-1}^2
\]
we have $\tilde V_\epsilon^\eta(x) +\phi(x) \geq 0$ when $|(x_0-\bar x_0,0)|_{-1}^2$ is small so that $\bar x$ is a local minimum for $\tilde V_\epsilon^\eta+\phi$.
We notice that $\phi \in \Phi$ since $D\tilde V_\epsilon^\eta(\bar x) \in D(\tilde A^*)$
(recall that $\tilde V_\epsilon^\eta \in C^{1}(X_{-1})$).
We also observe that $D |(x_0-\bar x_0,0)|_{-1}^2=2B(x_0-\bar x_0,0)$, $\frac{\partial^2}{\partial^2 x_0^2} |(x_0-\bar x_0,0)|_{-1}^2=2B_{00}$, so that 
\[
-D \phi(\bar x)=D\tilde V_\epsilon^\eta(\bar x), \quad -D^2_{x^2_0}\phi (\bar x)=D^2_{x_0^2}\tilde V_\epsilon^\eta(\bar x)-4\delta B_{00},
\]
where $B_{00}$ was defined in \eqref{eq:representation_B}.\\
Since $\bar x$ is a local minimum of $\tilde V_\epsilon^\eta+\phi$ and $w_\epsilon$ is a viscosity supersolution of \eqref{eq:w_eps_eta_supersolution}, we thus have
\begin{align*}
 \rho \tilde V_\epsilon^\eta(\bar x)& + \langle \bar x, \tilde A^*D \phi (\bar x)  \rangle_X + \tilde H \left ( \bar x, -D_{x_0}\phi (\bar x) , -D^2_{x^2_0}\phi(\bar x)  \right) \geq -\gamma(\epsilon)-\omega_{R,\epsilon}(\eta),
 \end{align*}
 from which by \eqref{eq:hamiltonian_synthesis}, we obtain
 \begin{align*}
\rho \tilde V_\epsilon^\eta(\bar x) & -\langle \bar x, \tilde A^*D\tilde V_\epsilon^\eta(\bar x) \rangle_X+\tilde H \left (\bar x,D_{x_0}\tilde V_\epsilon^\eta(\bar x),D^2_{x^2_0}\tilde V_\epsilon^\eta(\bar x) \right)\\ & \geq -\gamma(\epsilon) -\omega_{R,\epsilon}(\eta) -   4\delta\tr \left [ \sigma_0 \left ( \bar x_0,\int_{-d}^0 a_1(\xi)\bar x_1(\xi)\,d\xi \right)\sigma_0 \left ( \bar x_0,\int_{-d}^0 a_1(\xi)\bar x_1(\xi)\,d\xi\right)^T  B_{00} \right].
\end{align*}
The result follows by letting $\delta\to 0$.
\end{proof}

\section{Verification Theorem and Optimal Synthesis}\label{sec:verthm-optfeed}
In this section we prove a Verification Theorem and construct an optimal feedback control for our problem.
We start by proving the following proposition.
\begin{prop}\label{prop:V_bar_equal_V} 
Let Assumptions \ref{hp:state}, \ref{hp:cost}, \ref{hp:discount} and \ref{hp:convexity} hold. There exists $\overline \rho\geq \rho_0$ such that if $\rho> \overline \rho$ then
$\overline V(x)=V(x) \quad \forall x \in X.$
\end{prop}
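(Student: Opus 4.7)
The inequality $\overline V(x)\le V(x)$ is immediate: every reference probability space is a generalized reference probability space, so $\mathcal U\subset \mathcal{\overline U}$ and therefore the infimum over the larger class is smaller. All the work lies in the reverse inequality $V(x)\le \overline V(x)$. The plan is to show that for every $u(\cdot)\in \mathcal{\overline U}$ and every $x\in X$ one has $V(x)\le J(x;u(\cdot))$, by applying the non-smooth Dynkin formula of Section~\ref{sec:dynkin_formula} to the regularized approximants $\tilde V_\epsilon^\eta\in\mathcal D$ constructed in Section~\ref{sec:lions_approx}, exploiting the fact that these approximants are genuine pointwise supersolutions of a perturbed HJB equation (Lemma~\ref{lemma:w_eta_eps_classical_supersol_perturbed_HJB}).

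Fix $x\in X$, $u(\cdot)\in \mathcal{\overline U}$, and let $Y(t)=Y^{x,u}(t)$ be the corresponding mild solution to \eqref{eq:abstract_dissipative_operator}. With $\chi^{R}:=\inf\{s\in[0,T]:|Y(s)|_X>R\}$, Lemma~\ref{lemma:ito} applied to $\tilde V_\epsilon^\eta$ gives
\begin{align*}
\mathbb E\bigl[e^{-\rho(t\wedge \chi^{R})}\tilde V_\epsilon^\eta(Y(t\wedge \chi^{R}))\bigr]
=\tilde V_\epsilon^\eta(x)+\mathbb E\int_0^{t\wedge \chi^{R}} e^{-\rho s}\Bigl[
&-\rho \tilde V_\epsilon^\eta(Y(s))+\langle Y(s),\tilde A^* D\tilde V_\epsilon^\eta(Y(s))\rangle_X \\
&+\tilde b_0(Y(s),u(s))\cdot D_{x_0}\tilde V_\epsilon^\eta(Y(s))\\
&+\tfrac12\tr\bigl(\sigma_0(Y(s))\sigma_0(Y(s))^T D^2_{x_0^2}\tilde V_\epsilon^\eta(Y(s))\bigr)
\Bigr]ds.
\end{align*}
By the definition of $\tilde H$ in \eqref{eq:hamiltonian_synthesis}--\eqref{eq:hamiltonian_synthesis_no_sigma}, for every $u\in U$,
\[
\tilde b_0(Y(s),u)\cdot D_{x_0}\tilde V_\epsilon^\eta+\tfrac12\tr\bigl(\sigma_0\sigma_0^T D^2_{x_0^2}\tilde V_\epsilon^\eta\bigr)\ge -\tilde H\bigl(Y(s),D_{x_0}\tilde V_\epsilon^\eta,D^2_{x_0^2}\tilde V_\epsilon^\eta\bigr)-l(Y_0(s),u(s)).
\]
Combining this with the pointwise supersolution property of $\tilde V_\epsilon^\eta$ on $B_R$ from Lemma~\ref{lemma:w_eta_eps_classical_supersol_perturbed_HJB}, the integrand is bounded below by $-l(Y_0(s),u(s))-\gamma(\epsilon)-\omega_{R,\epsilon}(\eta)$. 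Rearranging,
\begin{align*}
\tilde V_\epsilon^\eta(x)\le \;& \mathbb E\bigl[e^{-\rho(t\wedge \chi^{R})}\tilde V_\epsilon^\eta(Y(t\wedge \chi^{R}))\bigr]
+\mathbb E\int_0^{t\wedge \chi^R} e^{-\rho s}l(Y_0(s),u(s))ds
+\frac{\gamma(\epsilon)+\omega_{R,\epsilon}(\eta)}{\rho}.
\end{align*}

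The remaining task is to pass to the limit in the order $R\to\infty$, $t\to\infty$, $\eta\to 0$, $\epsilon\to 0$. Using $|\tilde V_\epsilon^\eta(y)|\le C(1+|y|_X)$ (inherited from Proposition~\ref{prop:growth_V} via \eqref{eq:convergence_inf_convolution} and \eqref{eq:convergence_convolution_w_eps_eta}), the growth estimate of Proposition~\ref{prop:expect_Y(s)} together with a Chebyshev-type argument shows that the contribution of $\{\chi^R<t\}$ vanishes as $R\to\infty$, and the uniform integrability needed to drop the stopping time follows from $\mathbb E|Y(t)|_X\le C_\lambda(1+|x|_X)e^{\lambda t}$ with $\lambda$ slightly above $\rho_0$. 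Choosing $\overline\rho>\rho_0$ large enough that $e^{-\rho t}\mathbb E|Y(t)|_X\to 0$ as $t\to\infty$ (this is where the restriction on $\rho$ enters), the boundary term $\mathbb E[e^{-\rho t}\tilde V_\epsilon^\eta(Y(t))]\to 0$. The running cost term converges to $J(x;u(\cdot))$ by monotone/dominated convergence using Hypothesis~\ref{hp:cost} and the same exponential estimate. Finally sending $\eta\to 0$ (using $|\tilde V_\epsilon-\tilde V_\epsilon^\eta|\le C_\epsilon\eta$ from \eqref{eq:convergence_convolution_w_eps_eta}) and $\epsilon\to 0$ (using $\tilde V_\epsilon\to V$ uniformly on $X$ from \eqref{eq:convergence_inf_convolution}) yields
\[
V(x)\le J(x;u(\cdot))\qquad \forall u(\cdot)\in \mathcal{\overline U},
\]
whence $V(x)\le \overline V(x)$.

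The main obstacle, and the reason the threshold $\overline\rho$ is needed, is controlling the boundary term $\mathbb E[e^{-\rho(t\wedge\chi^R)}\tilde V_\epsilon^\eta(Y(t\wedge\chi^R))]$ uniformly in $R,\eta,\epsilon$ as $R,t\to\infty$: since $\tilde V_\epsilon^\eta$ only has linear growth in $|\cdot|_X$ (not controlled via the weak norm $|\cdot|_{-1}$) while $Y(t)$ grows at most like $e^{\lambda t}$ with $\lambda>\rho_0$, one must take $\rho$ large enough that $\rho>\lambda$ for some admissible $\lambda$ in Proposition~\ref{prop:expect_Y(s)}. All other ingredients -- regularity of $\tilde V_\epsilon^\eta$, pointwise supersolution property, Dynkin's formula -- are already in place from Sections~\ref{sec:approximation via inf-convolutions}--\ref{sec:lions_approx}.
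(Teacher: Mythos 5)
Your proof has a fatal sign error at the core step. You invoke the pointwise supersolution inequality of Lemma~\ref{lemma:w_eta_eps_classical_supersol_perturbed_HJB} as if it provided a lower bound on the Dynkin integrand, but it gives a bound in the wrong direction. Write $\mathcal L^u\phi:=\langle\,\cdot\,,\tilde A^*D\phi\rangle_X+\tilde b_0(\cdot,u)\cdot D_{x_0}\phi+\tfrac12\tr(\sigma_0\sigma_0^TD^2_{x_0^2}\phi)$, so Dynkin's formula reads
\[
\tilde V_\epsilon^\eta(x)=\mathbb E\bigl[e^{-\rho(t\wedge\chi^R)}\tilde V_\epsilon^\eta(Y(t\wedge\chi^R))\bigr]+\mathbb E\int_0^{t\wedge\chi^R}e^{-\rho s}\bigl[\rho\tilde V_\epsilon^\eta-\mathcal L^u\tilde V_\epsilon^\eta\bigr]\,ds.
\]
To conclude $\tilde V_\epsilon^\eta(x)\le\text{boundary}+\mathbb E\int l+\text{error}$ for an \emph{arbitrary} $u(\cdot)\in\mathcal{\overline U}$, you need the upper bound $\rho\tilde V_\epsilon^\eta-\mathcal L^u\tilde V_\epsilon^\eta\le l+\text{error}$. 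Splitting $\rho\tilde V_\epsilon^\eta-\mathcal L^u\tilde V_\epsilon^\eta=\bigl[\rho\tilde V_\epsilon^\eta-\langle\,\cdot\,,\tilde A^*D\tilde V_\epsilon^\eta\rangle_X+\tilde H\bigr]-\bigl[\tilde H+\tilde b_0\cdot D_{x_0}\tilde V_\epsilon^\eta+\tfrac12\tr(\cdots)\bigr]$, the second bracket is $\ge -l$ by the definition of $\tilde H$ as a sup, so the argument requires the first bracket to be $\le\gamma(\epsilon)+\omega_{R,\epsilon}(\eta)$ — which is the \emph{sub}solution inequality. Lemma~\ref{lemma:w_eta_eps_classical_supersol_perturbed_HJB} gives only the supersolution bound $\ge-\gamma(\epsilon)-\omega_{R,\epsilon}(\eta)$, i.e.\ a lower bound; combined with the lower bound on the second bracket, this yields no control on $\rho\tilde V_\epsilon^\eta-\mathcal L^u\tilde V_\epsilon^\eta$ whatsoever. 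The supersolution direction is precisely what Theorem~\ref{th:verification} needs, but there the specific control $u^*$ turns the second bracket into an \emph{equality} $=-l$, which is what rescues the sign.

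To get a pointwise subsolution you would need to take a \emph{sup}-convolution of a $|\cdot|_{-1}$-\emph{semiconcave} function, and $V$ is only assumed $|\cdot|_{-1}$-semiconvex (Assumption~\ref{hp:convexity}); the inf-convolution and Lions convolution of Sections~\ref{sec:approximation via inf-convolutions}--\ref{sec:lions_approx} can only yield supersolutions. The paper circumvents this with an additional regularization layer that your plan does not contain: replace $b_0,\sigma_0,L$ by $C^{1,1}(X_{-1})$ approximations $b_0^\xi,\sigma_0^\xi,L^\xi$ (double Lasry--Lions sup-inf convolutions in the $|\cdot|_{-1}$-norm); show $V^\xi\to V$ uniformly and, crucially, that $V^\xi$ is $|\cdot|_{-1}$-\emph{semiconcave} provided $\rho$ exceeds a threshold $\tilde\rho$ coming from the exponential rates in $\mathbb E|Y(t)-\bar Y(t)|_{-1}^2$ and $\mathbb E|Y^\lambda(t)-Y_\lambda(t)|_{-1}$; then run the machinery of Sections~\ref{sec:approximation via inf-convolutions}--\ref{sec:lions_approx} on $V^\xi$ with sup-convolutions and reversed inequalities to produce a pointwise \emph{sub}solution $(\tilde V^\xi)^\epsilon_\eta\in\mathcal D$, to which Dynkin is applied exactly as you intended, finally letting $\xi\to0$. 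This also corrects your identification of where the threshold $\overline\rho$ comes from: it is not primarily for killing the boundary term (the existing $\rho>\rho_0$ and Proposition~\ref{prop:expect_Y(s)} already handle that), but for the semiconcavity of $V^\xi$.
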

\begin{proof}
Since $ \mathcal{\overline U } \supset \mathcal U$ we immediately have $\overline V \leq V$. Hence we are left to prove 
\begin{equation}\label{eq:tilde_V_leq}
V \leq \overline  V.
\end{equation}
We divide the proof of this fact into several parts.

%(i) We approximate $b,\sigma, L$ by their inf-sup convolutions $b^\xi, \sigma^\xi, L^\xi \in C^{1,1}(X_{-1})$ and we consider approximating control %problems with these drift, diffusion and cost functions. Denoting by $V^\xi$ the value function of the approximating problems, we show that
%\begin{equation}\label{eq:uniform_convergence_V_xi}
%V^\xi \xrightarrow{\xi \to 0} V \quad \textit{uniformly in } X.
%\end{equation} 

(i) For every $\xi >0$ we define $L^\xi\colon X \times U \to \mathbb R, \tilde b_0^\xi \colon X \times U \to \mathbb R^{n}, \sigma_0^\xi\colon X \to M^{n \times q }$ by
\begin{align*}
&L^\xi(x,u)=\sup_{z \in X} \inf_{y \in X} \left[ L(y,u) + \frac{1}{2 \xi} |z-y|_{-1}^2- \frac 1 \xi |z-x|_{-1}^2 \right]\\
&(\tilde b_{0}^\xi)_i(x,u)=\sup_{z \in X} \inf_{y \in X} \left[ \tilde (b_{0})_i(y,u) + \frac{1}{2 \xi} |z-y|_{-1}^2- \frac 1 \xi |z-x|_{-1}^2 \right], \quad i=1,...,n,\\
&(\sigma_{0}^\xi)_{ij}(x)=\sup_{z \in X} \inf_{y \in X} \left[ (\sigma_{0})_{ij}(y) + \frac{1}{2 \xi} |z-y|_{-1}^2- \frac 1 \xi |z-x|_{-1}^2 \right] \quad i=1,...,n, j=1,...,q.
\end{align*}
%where for the vector valued function $\tilde b_0 $ and the matrix-valued function $\sigma_0$ the inf-sup operations are meant component-wise. 
The functions $L(\cdot,u),(\tilde b_0)_i(\cdot,u), (\sigma_0)_{ij}$ are Lipschitz in the $|\cdot|_{-1}$ norm. By \cite{lasry} we have $ L^\xi(\cdot,u), (\tilde b_{0}^\xi)_i(\cdot,u), (\sigma_{0}^\xi)_{ij} \in C^{1,1}(X_{-1})$ (where by density of $X \subset X_{-1}$ we have extended $ L^\xi, \tilde b_{0}^\xi, \sigma_{0}^\xi$ to $X_{-1}$). Moreover $L^\xi(\cdot,u),(\tilde b_0^\xi )_i (\cdot,u) , (\sigma_0^\xi)_{ij}$ are Lipschitz in the $|\cdot|_{-1}$ norm (with  Lipschitz constants  independent of $\xi,u$). In fact the Lipschitz constants of $L^\xi(\cdot,u),(\tilde b_0^\xi )_i (\cdot,u) , (\sigma_0^\xi)_{ij}$ are the same as those of $L(\cdot,u),(\tilde b_0)_i(\cdot,u), (\sigma_0)_{ij}$.  Finally
\begin{equation}\label{eq:unif_convercenge_L_xi,b_xi,sigma_xi}
L^\xi \xrightarrow{\xi \to 0} L, \quad b_{0}^\xi \xrightarrow{\xi \to 0} \tilde b_{0} \quad \textit{uniformly in } X \times U, \quad  \sigma_{0}^\xi \xrightarrow{\xi \to 0} \sigma_{0}\quad \textit{uniformly in } X.
\end{equation} 
Define
$$\tilde b^\xi(x,u)=[\tilde b_0^\xi(x,u),0]^T, \quad \sigma^\xi(x)w=\begin{bmatrix}
\sigma_0^\xi(x) w,
0
\end{bmatrix}^T \quad \forall x \in X, \ w \in \mathbb{R}^q. $$  
Now, for every $x \in X, u(\cdot) \in \mathcal U$ we consider approximating optimal control problems (in the reference probability space formulation) with state equations
\begin{equation}\label{eq:state_eq_approximated_Y_xi}
dY^\xi(t)=\tilde AY^\xi(t) dt + \tilde b^\xi(Y^\xi(t),u(t))dt+\sigma^\xi(Y^\xi(t))dW_t, \quad Y^\xi(0)=x,
\end{equation}
and cost functionals and value functions
$$J^\xi(x;u(\cdot))=\mathbb E \int_0^\infty e^{-\rho t} L^\xi(Y^\xi(t),u(t)) dt,\quad V^\xi(x)=\inf_{u(\cdot) \in \mathcal{U}} J^\xi(x;u(\cdot)).$$
Moreover, $V^\xi$ satisfies \eqref{Va-1} and Theorem \ref{th:existence_uniqueness_viscosity_infinite} holds for $V^\xi$, that is $V^\xi$ is the unique viscosity solution of the HJB equation \eqref{eq:HJB} with $b,\sigma, L$ replaced by $b^\xi, \sigma^\xi, L^\xi$.

(ii) Denoting by $Y(t)$ the solution of \eqref{eq:abstract_dissipative_operator}, we prove that there exist a modulus of continuity $\omega$  and a constant $\lambda >0$  (both independent of $x,u(\cdot)$) such that
\begin{equation}\label{eq:E_norm_-1_Y_xi-Y}
\mathbb E |Y^\xi(t)-Y(t)|_{-1}^2 \leq t \omega(\xi)e^{\lambda t} \quad \forall t \geq 0, \xi >0.
\end{equation}
Indeed, since for $\xi>0$ 
$$d(Y^\xi-Y)(t)=\tilde A(Y^\xi-Y)(t) dt +\left  [\tilde b^\xi(Y^\xi(t),u(t))-\tilde b(Y(t),u(t)) \right] dt+\left [ \sigma^\xi(Y^\xi(t))-\sigma(Y(t)) \right ]dW(t), \quad (Y^\xi-Y)(0)=x,$$
by Ito's formula \cite[Proposition 1.165]{fgs_book} we have
\begin{small}
\begin{align*}
\mathbb E |Y^\xi(t)-Y(t)|_{-1}^2 &  = 2 \int_0^t \mathbb E  \langle   \tilde A^*B (Y^\xi(s)-Y(s)), Y^\xi(s)-Y(s) \rangle_X +  \mathbb E  \langle B(Y^\xi(s)-Y(s)), \tilde b^\xi(Y^\xi(s),u(s))-\tilde b(Y(s),u(s))  \rangle_X ds\\
& \quad + \int_0^t \mathbb E \operatorname{Tr}\left[ \left ( \sigma^\xi(Y^\xi(s))-\tilde \sigma (Y(s)) \right ) \left ( \sigma^\xi(Y^\xi(s))-\tilde \sigma (Y(s)) \right )^* B  \right ]ds \\
& \leq 2 \int_0^t\mathbb E  \langle B(Y^\xi(s)-Y(s)), \tilde b^\xi(Y^\xi(s),u(s))-\tilde b(Y(s),u(s))  \rangle_X ds\\
& \quad + \int_0^t \mathbb E \operatorname{Tr}\left[ \left ( \sigma^\xi(Y^\xi(s),u(s))-\tilde \sigma (Y(s),u(s)) \right ) \left ( \sigma^\xi(Y^\xi(s))-\tilde \sigma (Y(s)) \right )^* B  \right ]ds,
\end{align*}
\end{small}
where the  inequality follows using the weak $B$-condition with $C_0=0$ (i.e. Proposition \ref{prop:weak_b}). Consider the first term on the right-hand-side. Using \eqref{eq:b_B_property}, \eqref{eq:unif_convercenge_L_xi,b_xi,sigma_xi} and the uniform convergence of $\tilde b_0^\xi$, we have
\begin{small}
\begin{align*}
\int_0^t & \mathbb E  \langle B(Y^\xi(s)-Y(s)), \tilde b^\xi(Y^\xi(s),u(s))-\tilde b(Y(s),u(s))  \rangle_X ds\\ &  = \int_0^t\mathbb E  \langle B(Y^\xi(s)-Y(s)), \tilde b^\xi(Y^\xi(s),u(s))-\tilde b(Y^\xi(s),u(s))  \rangle_X ds +\int_0^t\mathbb E  \langle B(Y^\xi(s)-Y(s)), \tilde b(Y^\xi(s),u(s))-\tilde b(Y(s),u(s))  \rangle_X ds\\
& \leq C \int_0^t \left [   \mathbb E |Y^\xi(s)-Y(s)|_{-1}^2 + \left  |\tilde b_0^\xi(Y^\xi(s),u(s))-\tilde b_0(Y^\xi(s),u(s))  \right |^2   \right]ds + C \int_0^t  \mathbb E |Y^\xi(s)-Y(s)|_{-1}^2ds\\
& \leq C \int_0^t   \mathbb E |Y^\xi(s)-Y(s)|_{-1}^2ds + t\omega (\xi)
\end{align*}
\end{small}
for some $C>0$ and a modulus of continuity $\omega$ independent of $x,u(\cdot).$
An analogous inequality is obtained similarly for the second term on the right-hand side. Hence we have
\begin{align*}
\mathbb E |Y^\xi(t)-Y(t)|_{-1}^2  \leq C  \int_0^t   \mathbb E |Y^\xi(s)-Y(s)|_{-1}^2ds + t\omega (\xi)
\end{align*}
and by Gronwall's lemma we obtain \eqref{eq:E_norm_-1_Y_xi-Y} for some $\lambda>0$.\\
(iii) We can now prove that
\begin{equation}\label{eq:uniform_convergence_V_xi}
V^\xi \xrightarrow{\xi \to 0} V \quad \textit{uniformly in } X.
\end{equation} 
Indeed, fix $\rho> \max(\rho_0,\frac{\lambda}{2})$ and let $x \in X, u(\cdot) \in \mathcal U$.  By \eqref{eq:unif_convercenge_L_xi,b_xi,sigma_xi}, \eqref{eq:L_unif_cont_norm_B} and \eqref{eq:E_norm_-1_Y_xi-Y} we have
\begin{small}
\begin{align*}
|J^\xi(x;u(\cdot))-J(x;u(\cdot))|
& \leq \int_0^\infty  e^{-\rho t} \mathbb E |L^\xi(Y^\xi(t),u(t))-L(Y^\xi(t),u(t))| dt + \int_0^\infty  e^{-\rho t} \mathbb E |L(Y^\xi(t),u(t))-L(Y(t),u(t))| dt\\
&  \leq \int_0^\infty  e^{-\rho t} \omega_1(\xi) dt +C \int_0^\infty  e^{-\rho t} \mathbb E |Y^\xi(t)-Y(t)|_{-1}dt \leq  \omega_1(\xi)  +\omega_2(\xi) \int_0^\infty  t e^{-(\rho -\frac{\lambda}{2})t} dt  \leq \omega(\xi).
\end{align*}
\end{small}
for some modulus of continuity $\omega$ independent of $x,u(\cdot).$ This implies \eqref{eq:uniform_convergence_V_xi}.

(iv) We prove that there exists $\bar \rho\geq \max(\rho_0,\frac{\lambda}{2})$ such that for every $\rho> \bar \rho$,  $V^\xi$ is $|\cdot|_{-1}$-semiconcave for every $\xi >0$.

Fix $\xi >0$. It is enough to show that there exists $\bar \rho>0$ such that $\forall \rho> \bar\rho$ $ J^\xi(\cdot, u(\cdot))$   is $|\cdot|_{-1}$-semiconcave  with a semiconcavity constant independent of $u(\cdot)$.
Indeed let $x, \bar x \in X$, $u(\cdot) \in \mathcal U$, $\lambda \in [0,1]$. Denote by $Y(t), \bar Y(t)$ the solutions of \eqref{eq:state_eq_approximated_Y_xi} with initial state $x, \bar x$ respectively and control $u(\cdot).$ Moreover, set $x_\lambda=\lambda x + (1-\lambda) \bar x$ and let $Y_\lambda(t)$ be the solution of \eqref{eq:state_eq_approximated_Y_xi} with initial state $x_\lambda$ and control $u(\cdot)$. Finally, set $Y^\lambda(t)=\lambda Y(t)+ (1-\lambda) \bar Y(t)$. Then, by the $|\cdot|_{-1}$-Lipschitzianity (uniformly in $u, \xi$) and the $|\cdot|_{-1}$-semiconcavity of $L^\xi(\cdot,u)$ (with a semiconcavity constant uniform in $u$), we have
\begin{small}
\begin{align*}
\lambda J^\xi(x; u(\cdot))&+ (1-\lambda)J^\xi(\bar x;u(\cdot)) - J^\xi(x_\lambda;u(\cdot))\\
&=\int_0^\infty e^{-\rho t}\mathbb E \left [ \lambda L^\xi(Y(t),u(t))+(1-\lambda ) L^\xi(\bar Y(t),u(t)) - L^\xi(Y^\lambda(t), u(t))\right ] dt\\
&\quad + \int_0^\infty e^{-\rho t}\mathbb E \left [L^\xi(Y^\lambda(t), u(t))-  L^\xi(Y_\lambda(t), u(t)\right ] dt\\
& \leq C_\xi\lambda (1-\lambda) \int_0^\infty e^{-\rho t}  \mathbb E |Y(t)-\bar Y(t)|^2_{-1}dt + C\int_0^\infty e^{-\rho t} \mathbb E |Y^\lambda(t) - Y_\lambda(t))|_{-1} dt.
\end{align*}
\end{small}
for some $C_\xi,C>0$ (independent of $u(\cdot)$).\\
By \cite[Lemmas 5.3, 5.8]{defeo_swiech_wessels} there exist constants $C,\tilde \rho>0$ (both independent of $\xi,u(\cdot)$), and $C_\xi$ (independent of $u(\cdot)$) such that 
$$\mathbb E |Y(t)-\bar Y(t)|^2_{-1} \leq C  e^{\tilde \rho t } |\bar x- x|_{-1}^2 , \quad  \mathbb E |Y^\lambda(t) - Y_\lambda(t))|_{-1} \leq C_\xi  \lambda (1-\lambda)  e^{\tilde \rho t } |\bar x- x|_{-1}^2.$$
By inserting these inequalities in the previous one, for every $\rho> \bar \rho:=\tilde \rho=\max(\tilde\rho,\rho_0,\frac{\lambda}{2})$ 
\begin{align*}
\lambda J^\xi(x, u(\cdot))&+ (1-\lambda)J^\xi(\bar x, u(\cdot)) - J^\xi(x_\lambda,u(\cdot)) \leq C_\xi \lambda (1-\lambda) |\bar x- x|_{-1}^2,
\end{align*}
which yields the claim. Hence $(iv)$ follows.

(v)
We now show \eqref{eq:tilde_V_leq}. Following Section \ref{sec:approximation via inf-convolutions} we extend $V^\xi$ to the function $\tilde V^\xi$ on $X_{-1}$, which then satisfies \eqref{Va-1} on $X_{-1}$ and is semiconcave in $X_{-1}$.
%Indeed, by standard results we have that $V^\xi$ is semiconcave with respect to $|\cdot|_{-1}$ for every $\xi >0$. In particular the proof is similar to the one of \cite[Theorem 3.9]{defeo_swiech_wessels}. We only point out that in our case, as we have an infinite horizon problem, we need  $\rho > 2C+|B|_{\mathcal L(X)}C^2$. Indeed, looking at \cite[Eq. (47)]{defeo_swiech_wessels} and \cite[Eq. (41), proof of Lemma 3.7]{defeo_swiech_wessels}, we need to replace the use of \cite[Lemma 3.2]{defeo_swiech_wessels}   with \cite[Lemma 3.20]{fgs_book}. In \cite[Lemma 3.20]{fgs_book}, from the proof, we have $C(T)=e^{2C_0+2C+|B|_{\mathcal L(X)}C^2}$, where $C_0$ is the constant from the weak-B condition and for us $C_0=0$ and $C$ is the constant from \eqref{eq:b_lip}, \eqref{eq:G_lipschitz_norm_B}. Hence we must have $\rho > 2C+|B|_{\mathcal L(X)}C^2$ in order for the argument to work.
We now fix $\xi>0$ and for every $\epsilon>0$ consider the sup-convolution $(\tilde V^\xi)^\epsilon$ of $\tilde V^\xi$, that is 
%(here we keep using the notation for the inf-convolution instead of the usual one for sup-convolutions for consistency with Section 4), i.e 
$$(\tilde V^\xi)^{\epsilon}(x):=\sup_{y \in X_{-1}}\left [ \tilde V^\xi(y)-\frac{1}{2\epsilon}|x-y|^2_{-1}\right].$$
We denote the restriction of $(\tilde V^\xi)^{\epsilon}$ to $X$ by $(V^\xi)^{\epsilon}$.
Similarly to Lemma \ref{lem:c11}, if $\epsilon$ is small enough, $(\tilde V^\xi)^{\epsilon}\in C^{1,1}(X_{-1})$ and
\begin{equation}
(V^\xi)^{\epsilon} \xrightarrow{\epsilon \to 0} V^\xi \quad \textit{uniformly.}
\end{equation}
By repeating the procedure from Sections \ref{sec:approximation via inf-convolutions} and \ref{sec:lions_approx} (Proposition \ref{prop:inf_conv_subsolution_perturbed}, Lemma \ref{lemma:z_eta}, Lemma \ref{lemma:w_eta_eps_classical_supersol_perturbed_HJB} with  adjustments since we are now dealing with sup-convolutions and viscosity subsolutions, but the proofs are the same), we obtain that for every $\eta>0$ there exist $(\tilde V^\xi)^\epsilon_\eta \in \mathcal D$ such that 
\begin{equation*}
|(\tilde V^\xi)^\epsilon-(\tilde V^\xi)^\epsilon_\eta|\leq C^\xi_\epsilon \eta ,  \quad |D_{-1}(\tilde V^\xi)^\epsilon-D_{-1}(\tilde V^\xi)^\epsilon_\eta|_{-1}\leq C^\xi_\epsilon \eta
\end{equation*}
for some $C^\xi_\epsilon>0$ (independent of $\eta$) and such that for every $R>0$ 
\begin{align}\label{eq:V_xi_eta_eps_subsolution_perturbed_HJB}
\rho (\tilde V^\xi)^\epsilon_\eta(x)-\langle  \tilde A^*D(\tilde V^\xi)^\epsilon_\eta(x),x \rangle_X +\tilde H \left (x,D_{x_0}(\tilde V^\xi)^\epsilon_\eta(x), D^2_{x_0^2}(\tilde V^\xi)^\epsilon_\eta(x) \right) \leq \gamma^\xi(\epsilon) + \omega^\xi_{R,\epsilon}(\eta) \quad   \forall x \in B_R.
\end{align}

We now fix $x \in X,u(\cdot) \in \overline U$ and denote by $Y(t)$ the solution of the state equation with initial state $x$ and control $u(\cdot)$. Let $R,t>0$ and define
 $
\chi^R = \inf \{s \in[0, t]:|Y(s)|_X>R\}.
$  Since $ (\tilde V^\xi)_\eta^\epsilon \in \mathcal D$, we can apply Lemma \ref{lemma:ito} to $\phi=(\tilde V^\xi)_\eta^\epsilon$ to get
  \begin{small}
\begin{align*}
(\tilde V^\xi)_\eta^\epsilon( x)
& =\mathbb E \left[ e^{-\rho (t\wedge \chi^R)}  (\tilde V^\xi)_\eta^\epsilon(Y(t \wedge \chi^R )) \right]
 +\mathbb E  \int_0^{t \wedge \chi^R} e^{-\rho s}  \Big [ \rho (\tilde V^\xi)_\eta^\epsilon(Y(s)) -    \langle  Y(s), \tilde A^* D (\tilde V^\xi)_\eta^\epsilon(Y(s))  \rangle_X \\
& \quad  - \tilde  b_0(Y(s),u(s)) \cdot D_{x_0} (\tilde V^\xi)_\eta^\epsilon(Y(s))      
- \frac 1 2  \tr \left ( \sigma_0(Y(s))\sigma_0(Y(s))^T D^2_{x_0^2}(\tilde V^\xi)_\eta^\epsilon(Y(s)) \right ) \Big ]   ds\\
& =\mathbb E \left[ e^{-\rho (t\wedge \chi^R)}  (\tilde V^\xi)_\eta^\epsilon(Y(t \wedge \chi^R )) \right] +\mathbb E  \int_0^{t \wedge \chi^R} e^{-\rho s} l(Y(s),u(s)) ds\\
 &\,\, +\mathbb E  \int_0^{t \wedge \chi^R} e^{-\rho s}  \Big [ \rho (\tilde V^\xi)_\eta^\epsilon(Y(s))  -    \langle  Y(s), \tilde A^* D (\tilde V^\xi)_\eta^\epsilon(Y(s))  \rangle_X
  - \tilde  b_0(Y(s),u(s)) \cdot D_{x_0} (\tilde V^\xi)_\eta^\epsilon(Y(s))        -l(Y(s),u(s))  \\
&\quad \quad \quad \quad \quad \quad - \frac 1 2  \tr \left ( \sigma_0(Y(s))\sigma_0(Y(s))^T D^2_{x_0^2}(\tilde V^\xi)_\eta^\epsilon(Y(s)) \right ) \Big ]   ds\\
& \leq \mathbb E \left[ e^{-\rho (t\wedge \chi^R)}  (\tilde V^\xi)_\eta^\epsilon(Y(t \wedge \chi^R )) \right] +\mathbb E  \int_0^{t \wedge \chi^R} e^{-\rho s} l(Y(s),u(s)) ds\\
 &\,\, +\mathbb E  \int_0^{t \wedge \chi^R} e^{-\rho s}  \Big [ \rho (\tilde V^\xi)_\eta^\epsilon(Y(s))  -    \langle  Y(s), \tilde A^* D (\tilde V^\xi)_\eta^\epsilon(Y(s))  \rangle_X
 +\tilde H\left (Y(s), D_{x_0} (\tilde V^\xi)_\eta^\epsilon(Y(s))  , D^2_{x_0^2}(\tilde V^\xi)_\eta^\epsilon(Y(s)) \right )   ds\\
 & \leq \mathbb E \left[ e^{-\rho (t\wedge \chi^R)}  (\tilde V^\xi)_\eta^\epsilon(Y(t \wedge \chi^R )) \right] +\mathbb E  \int_0^{t \wedge \chi^R} e^{-\rho s} l(Y(s),u(s)) ds+\gamma^\xi(\epsilon) + \omega^\xi_{R,\epsilon}(\eta),
\end{align*}
\end{small}
where the first and the second inequalities follow by definition of $\tilde H$ and \eqref{eq:V_xi_eta_eps_subsolution_perturbed_HJB} respectively. Letting (in this order) $\eta \to 0, \epsilon \to 0$, $R \to \infty$ and $t \to \infty$ we obtain $V^\xi(x) \leq J(x;u(\cdot))$. Finally, letting $\xi \to 0$, we get $V(x)
\leq  J(x;u(\cdot))$ and hence
\[  
V( x)\leq \inf_{u \in \overline U} J(x;u(\cdot)) = \overline V(x) \quad \forall x \in X.
\]
This concludes the proof of \eqref{eq:tilde_V_leq} so that $\overline V=V$. 
\end{proof}

We now strengthen Assumption \ref{hp:discount} by the following assumption.
\begin{hypothesis}\label{hp:discount2}
$\rho >\bar \rho$, where $\bar\rho$ is from Proposition \ref{prop:V_bar_equal_V}.
\end{hypothesis}
 \begin{theorem}[Verification]\label{th:verification} Let Assumptions \ref{hp:state}, \ref{hp:cost},  \ref{hp:uniform_ellipticity}, \ref{hp:convexity}, \ref{hp:discount2}  hold.
Let $x \in X$ and $u^*(\cdot) \in \mathcal {\overline U}$ be an admissible control. Denote by $Y^*(s)$ the solution of \eqref{eq:abstract_dissipative_operator} with $u(\cdot)=u^*(\cdot)$. Assume that
  $$u^*(s)\in {\rm argmax}_{u\in U} \Bigg \{ - b_0\left ( Y_0^*(s),\int_{-d}^0 a_1(\xi)Y_1^*(s)(\xi)\,d\xi ,u \right) \cdot D_{x_0}V(Y^*(s))  - l(Y_0^*(s),u)  \Bigg \},$$
$\mathbb P -$a.s. for a.e. $s \geq 0.$ Then, the pair $(Y^*(\cdot),u^*(\cdot))$ is optimal.
 \end{theorem}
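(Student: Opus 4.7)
Since $u^*(\cdot) \in \mathcal{\overline U}$, Proposition \ref{prop:V_bar_equal_V} gives $V(x) = \overline V(x) \le J(x;u^*(\cdot))$, so the task reduces to establishing the reverse inequality $V(x) \ge J(x;u^*(\cdot))$. My plan is to evaluate Dynkin's formula (Lemma \ref{lemma:ito}) along the controlled trajectory $Y^*$ using the regularizations $\tilde V_\epsilon^\eta \in \mathcal D$ constructed in Section \ref{sec:lions_approx}, exploit the pointwise supersolution inequality of Lemma \ref{lemma:w_eta_eps_classical_supersol_perturbed_HJB}, invoke the maximality condition on $u^*$, and pass to the limit in the order $\eta\to 0$, $\epsilon\to 0$, $R\to\infty$, $t\to\infty$.

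Concretely, fix $t,R>0$ and set $\tau := t\wedge\chi^R$ with $\chi^R := \inf\{s\in[0,t]:|Y^*(s)|_X>R\}$. Since $\tilde V_\epsilon^\eta \in \mathcal D$, Lemma \ref{lemma:ito} yields
\begin{align*}
\tilde V_\epsilon^\eta(x) &= \mathbb E\bigl[e^{-\rho\tau}\tilde V_\epsilon^\eta(Y^*(\tau))\bigr] + \mathbb E\int_0^\tau e^{-\rho s}\Bigl[\rho \tilde V_\epsilon^\eta(Y^*) - \langle Y^*,\tilde A^* D\tilde V_\epsilon^\eta(Y^*)\rangle_X \\
&\quad - \tilde b_0(Y^*,u^*)\cdot D_{x_0}\tilde V_\epsilon^\eta(Y^*) - \tfrac{1}{2}\operatorname{tr}\bigl(\sigma_0\sigma_0^T D^2_{x_0^2}\tilde V_\epsilon^\eta(Y^*)\bigr)\Bigr]\,ds.
\end{align*}
Using Lemma \ref{lemma:w_eta_eps_classical_supersol_perturbed_HJB} together with the splitting $\tilde H(x,p_0,Z_{00}) = \tilde H(x,p_0) - \tfrac{1}{2}\operatorname{tr}[\sigma_0\sigma_0^T Z_{00}]$, on $\{|Y^*|_X\le R\}$ the integrand above is bounded from below by
\[
-\tilde H(Y^*, D_{x_0}\tilde V_\epsilon^\eta) - \tilde b_0(Y^*,u^*)\cdot D_{x_0}\tilde V_\epsilon^\eta - \gamma(\epsilon) - \omega_{R,\epsilon}(\eta).
\]

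The decisive step is to show that the first two terms differ from $l(Y_0^*,u^*)$ by a vanishing error. Assumption \ref{hp:feedback_map_well-defined} together with the hypothesis on $u^*$ gives the pointwise identity $\tilde H(Y^*, D_{x_0}V(Y^*)) = -\tilde b_0(Y^*,u^*)\cdot D_{x_0}V(Y^*) - l(Y_0^*,u^*)$, whence
\begin{align*}
-\tilde H(Y^*, D_{x_0}\tilde V_\epsilon^\eta) - \tilde b_0(Y^*,u^*)\cdot D_{x_0}\tilde V_\epsilon^\eta &= l(Y_0^*,u^*) + \bigl[\tilde H(Y^*,D_{x_0}V) - \tilde H(Y^*,D_{x_0}\tilde V_\epsilon^\eta)\bigr] \\
&\quad + \tilde b_0(Y^*,u^*)\cdot[D_{x_0}V - D_{x_0}\tilde V_\epsilon^\eta].
\end{align*}
Since $\tilde H(x,\cdot)$ is Lipschitz in $p_0$ with constant $C(1+|x|_X)$ by \eqref{eq:Hamiltonian_local_lip} and since $|\tilde b_0(x,u)|\le C(1+|x|_X)$, on $\{|Y^*|_X\le R\}$ the last two bracketed terms are dominated by $C_R\sup_{B_R}|D_{x_0}V - D_{x_0}\tilde V_\epsilon^\eta|$; this supremum vanishes as $\eta\to 0$ (by \eqref{eq:convergence_convolution_w_eps_eta} applied through the identity $D\phi=BD_{-1}\phi$) and then as $\epsilon\to 0$ (by \eqref{eq:convergence_inf_convolution}).

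Plugging this into Dynkin's identity and letting successively $\eta\to 0$, $\epsilon\to 0$, $R\to\infty$ and $t\to\infty$, we arrive at
\[
V(x) \ge \mathbb E\!\int_0^\infty e^{-\rho s}\,l(Y_0^*(s),u^*(s))\,ds = J(x;u^*(\cdot)),
\]
which combined with the reverse inequality yields $V(x) = J(x;u^*(\cdot))$ and hence optimality. The boundary term $\mathbb E[e^{-\rho\tau}\tilde V_\epsilon^\eta(Y^*(\tau))]$ is handled, after passing to $V$ and removing the localization, by the linear growth of $V$ (Proposition \ref{prop:growth_V}) together with the moment estimate of Proposition \ref{prop:expect_Y(s)} and the discount condition of Assumption \ref{hp:discount2}. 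The main obstacle is the bookkeeping of three competing error sources --- the supersolution defect $\gamma(\epsilon)$, the regularization error $\omega_{R,\epsilon}(\eta)$, and the gradient approximation error $\sup_{B_R}|D_{x_0}V - D_{x_0}\tilde V_\epsilon^\eta|$ --- which must be sent to zero in the correct order ($\eta$, then $\epsilon$) before the localization $\chi^R$ can be removed.
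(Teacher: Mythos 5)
Your proof is correct and follows essentially the same path as the paper's: Dynkin's formula applied to $\tilde V_\epsilon^\eta \in \mathcal D$, the pointwise supersolution inequality from Lemma \ref{lemma:w_eta_eps_classical_supersol_perturbed_HJB}, the maximality condition on $u^*$ combined with \eqref{eq:Hamiltonian_local_lip} and the sublinear growth of $\tilde b_0$ to control the gradient-substitution error, and limits in the order $\eta\to 0$, $\epsilon\to 0$, $R\to\infty$, $t\to\infty$. The only difference is cosmetic: the paper swaps $D_{x_0}\tilde V_\epsilon^\eta \to D_{x_0}V$ in the drift term, invokes maximality, then the Hamiltonian Lipschitz bound, and only then the supersolution inequality, whereas you apply the supersolution inequality first and account for the gradient swap afterward — the two orderings are equivalent.
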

 \begin{proof}
Let $R,t>0$ and define
 $
\chi^R := \inf \{s \in[0, t]:|Y^*(s)|_X>R\}.
$
 Since $\tilde V^\eta_\epsilon \in \mathcal D$ we can apply Lemma \ref{lemma:ito} to $\phi=\tilde V^\eta_\epsilon$ to get
  \begin{small}
\begin{align*}
\tilde V^\eta_\epsilon( x)
& =\mathbb E \left[ e^{-\rho (t\wedge \chi^R)}  \tilde V^\eta_\epsilon(Y^*(t \wedge \chi^R )) \right]
 +\mathbb E  \int_0^{t \wedge \chi^R} e^{-\rho s}  \Big [ \rho \tilde V^\eta_\epsilon(Y^*(s)) -    \langle  Y^*(s), \tilde A^* D \tilde V^\eta_\epsilon(Y^*(s))  \rangle_X \\
& \quad  - \tilde  b_0(Y^*(s),u^*(s)) \cdot D_{x_0} \tilde V^\eta_\epsilon(Y^*(s))      
- \frac 1 2  \tr \left ( \sigma_0(Y^*(s))\sigma_0(Y^*(s))^T D^2_{x_0^2}\tilde V^\eta_\epsilon(Y^*(s)) \right ) \Big ]   ds\\
& =\mathbb E \left[ e^{-\rho (t\wedge \chi^R)}  \tilde V^\eta_\epsilon(Y^*(t \wedge \chi^R )) \right] +\mathbb E  \int_0^{t \wedge \chi^R} e^{-\rho s} l(Y^*(s),u^*(s)) ds\\
 &\quad +\mathbb E  \int_0^{t \wedge \chi^R} e^{-\rho s}  \Big [ \rho \tilde V^\eta_\epsilon(Y^*(s))  -    \langle  Y^*(s), \tilde A^* D \tilde V^\eta_\epsilon(Y^*(s))  \rangle_X
  - \tilde  b_0(Y^*(s),u^*(s)) \cdot D_{x_0} \tilde V^\eta_\epsilon(Y^*(s))        -l(Y^*(s),u^*(s))  \\
&\quad \quad \quad \quad \quad \quad - \frac 1 2  \tr \left ( \sigma_0(Y^*(s))\sigma_0(Y^*(s))^T D^2_{x_0^2}\tilde V^\eta_\epsilon(Y^*(s)) \right ) \Big ]   ds.
\end{align*}
\end{small}
Note that, by \eqref{eq:convergence_inf_convolution}, \eqref{eq:convergence_convolution_w_eps_eta}, we have $|D_{x_0} \tilde V^\eta_\epsilon(Y^*(s))       - D_{x_0} V(Y^*(s))  |_X \leq |D_{x_0} \tilde V^\eta_\epsilon(Y^*(s))       - D_{x_0} \tilde V_\epsilon(Y^*(s))     |_X + |D_{x_0} \tilde V_\epsilon(Y^*(s))       - D_{x_0} V(Y^*(s))  |_X \leq C_\epsilon \eta + \omega_R(\epsilon) $. Note also that thanks to $\tau_R$, we have $|\tilde  b_0(Y^*(s),u^*(s))| \leq C_R $. From these facts,  the  definition of  $u^*(\cdot)$, \eqref{eq:Hamiltonian_local_lip} and Lemma \ref{lemma:w_eta_eps_classical_supersol_perturbed_HJB}, we have
  \begin{small}
\begin{align*}
\tilde V^\eta_\epsilon( x) 
& \geq \mathbb E \left[ e^{-\rho (t\wedge \chi^R)}  \tilde V^\eta_\epsilon(Y^*(t \wedge \chi^R )) \right] +\mathbb E  \int_0^{t \wedge \chi^R} e^{-\rho s} l(Y^*(s),u^*(s)) ds\\
 &\quad +\mathbb E  \int_0^{t \wedge \chi^R} e^{-\rho s}  \Big [ \rho \tilde V^\eta_\epsilon(Y^*(s))  -    \langle  Y^*(s), \tilde A^* D \tilde V^\eta_\epsilon(Y^*(s))  \rangle_X
  - \tilde  b_0(Y^*(s),u^*(s)) \cdot D_{x_0} V(Y^*(s))      -l(Y^*(s),u^*(s))  \\
&\quad \quad \quad \quad \quad \quad - \frac 1 2  \tr \left ( \sigma_0(Y^*(s),u^*(s))\sigma_0(Y^*(s))^T D^2_{x_0^2}\tilde V^\eta_\epsilon(Y^*(s)) \right )-C_{R,\epsilon} \eta - \omega_R(\epsilon) \Big ]   ds \\
& = \mathbb E \left[ e^{-\rho (t\wedge \chi^R)}  \tilde V^\eta_\epsilon(Y^*(t \wedge \chi^R )) \right] +\mathbb E  \int_0^{t \wedge \chi^R} e^{-\rho s} l(Y^*(s),u^*(s)) ds\\
 &\quad +\mathbb E  \int_0^{t \wedge \chi^R} e^{-\rho s}  \Big [ \rho \tilde V^\eta_\epsilon(Y^*(s))  -    \langle  Y^*(s), \tilde A^* D \tilde V^\eta_\epsilon(Y^*(s))\rangle_X  +\tilde H\left (Y^*(s),D_{x_0}V(Y^*(s) ) \right) \\
 &\quad \quad \quad \quad \quad \quad - \frac 1 2  \tr \left ( \sigma_0(Y^*(s))\sigma_0(Y^*(s))^T D^2_{x_0^2}\tilde V^\eta_\epsilon(Y^*(s)) \right )-C_{R,\epsilon} \eta - \omega_R(\epsilon) \Big ]   ds \\
 & \geq  \mathbb E \left[ e^{-\rho (t\wedge \chi^R)}  \tilde V^\eta_\epsilon(Y^*(t \wedge \chi^R )) \right] +\mathbb E  \int_0^{t \wedge \chi^R} e^{-\rho s} l(Y^*(s),u^*(s)) ds\\
 &\quad +\mathbb E  \int_0^{t \wedge \chi^R} e^{-\rho s}  \Big [ \rho \tilde V^\eta_\epsilon(Y^*(s))  -    \langle  Y^*(s), \tilde A^* D \tilde V^\eta_\epsilon(Y^*(s))\rangle_X  +\tilde H\left (Y^*(s),D_{x_0}\tilde V^\eta_\epsilon(Y^*(s) ) \right) \\&\quad \quad \quad \quad \quad \quad - \frac 1 2  \tr \left ( \sigma_0(Y^*(s))\sigma_0(Y^*(s))^T D^2_{x_0^2}\tilde V^\eta_\epsilon(Y^*(s)) \right )-C_{R,\epsilon} \eta - \omega_R(\epsilon) \Big ]   ds \\
  & \geq  \mathbb E \left[ e^{-\rho (t\wedge \chi^R)}  \tilde V^\eta_\epsilon(Y^*(t \wedge \chi^R )) \right]+\mathbb E  \int_0^{t \wedge \chi^R} e^{-\rho s} l(Y^*(s),u^*(s)) ds  -\omega_{R,\epsilon}( \eta) - \omega_R(\epsilon),
\end{align*}
\end{small}
where the constants $C_{R,\epsilon}$ and moduli $\omega_R(\epsilon)$ might have changed from line to line.

Letting first $\eta \to 0$ and then $\epsilon \to 0$,  by \eqref{eq:convergence_inf_convolution}, \eqref{eq:convergence_convolution_w_eps_eta}, we get
\begin{align*}
V(x)
\geq  \mathbb E \left[ e^{-\rho (t\wedge \chi^R)}  V(Y^*(t \wedge \chi^R )) \right]+\mathbb E  \int_0^{t\wedge \chi^R}  e^{-\rho s} l(Y^*(s),u^*(s)) ds.
\end{align*}
We now send first $R \to \infty$ and then $t \to \infty$. Recalling Proposition \ref{prop:V_bar_equal_V}, we have 
\begin{align*}
\overline V(x)=V( x)
\geq \mathbb E  \int_0^\infty e^{-\rho s} l(Y^*(s),u^*(s)) ds= J(x;u^*(\cdot)),
\end{align*}
from which we obtain the optimality of $u^*(\cdot).$
 \end{proof}

We now  construct an optimal feedback control. To do this, we make an additional assumption about the Hamiltonian. 
\begin{hypothesis}\label{hp:feedback_map_well-defined}
We assume that the supremum in \eqref{eq:hamiltonian_synthesis_no_sigma} is a maximum, i.e. 
\begin{align*} \tilde H \left (x,p_0 \right)&= - x_0 \cdot p_0 + \max_{u\in U} \Bigg \{ - b_0\left ( x_0,\int_{-d}^0 a_1(\xi)x_1(\xi)\,d\xi ,u \right) \cdot p_0
 - l(x_0,u)  \Bigg \}. 
 \end{align*}
  \end{hypothesis}
We define the multivalued map $\Psi \colon X \to  \mathcal  P (U)$ by
  \begin{align}\label{eq:multivalued_function_PSI}
   \Psi(x):&={\rm argmax}_{u \in U}   \Bigg \{ - b_0\left ( x_0,\int_{-d}^0 a_1(\xi)x_1(\xi)\,d\xi ,u \right) \cdot D_{x_0}V(x) 
 - l(x_0,u)  \Bigg \}   \neq \emptyset, \quad \forall x \in X.
  \end{align}
   \begin{corollary}\label{cor:closed_loop} Let the assumptions of Theorem \ref{th:verification} be satisfied. In addition, let Assumption \ref{hp:feedback_map_well-defined} hold.
 Assume that $\Psi$ has a measurable selection $\psi$ such that the closed loop equation
  \begin{equation}
\label{eq:closed_loop}
dY(t) = [\tilde A Y(t)+\tilde b(Y(t),\psi(Y(t)) ] dt + \sigma(Y(t)) \,dW(t), \quad Y(0) = x \in X,
\end{equation}
admits a weak mild solution $Y^\psi(t)$ (e.g. see \cite[Definition 1.121]{fgs_book}) in some generalized reference probability space $\tau$. If we set $u^\psi(\cdot) :=  \psi(Y^\psi (\cdot)),$ then the pair $(u^\psi(\cdot),Y^\psi(\cdot))$ is optimal.
 \end{corollary}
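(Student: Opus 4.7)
The plan is to apply the verification Theorem \ref{th:verification} directly to the candidate pair $(u^\psi(\cdot), Y^\psi(\cdot))$. The corollary is essentially a repackaging of that theorem: the hypothesis that $\Psi$ admits a measurable selection $\psi$ for which the closed loop equation has a weak mild solution provides exactly the objects the theorem needs as input.

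First, I would verify admissibility of $u^\psi(\cdot)$ in the generalized reference probability space $\tau$ on which $Y^\psi$ is defined. By the definition of a weak mild solution, $Y^\psi$ is progressively measurable with respect to the filtration $(\mathcal F_t)_{t \ge 0}$ of $\tau$. Since $\psi \colon X \to U$ is Borel measurable, the composition $u^\psi(t) := \psi(Y^\psi(t))$ is $U$-valued and progressively measurable, hence $u^\psi(\cdot) \in \mathcal{\overline U}_\tau \subset \mathcal{\overline U}$. Moreover, the identity $\tilde b(Y^\psi(t), \psi(Y^\psi(t))) = \tilde b(Y^\psi(t), u^\psi(t))$ shows that $Y^\psi$ is a mild solution of \eqref{eq:abstract_dissipative_operator} with control $u^\psi(\cdot)$, so $Y^\psi$ plays the role of $Y^*$ in Theorem \ref{th:verification}.

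Next, the assumption $\psi(x) \in \Psi(x)$ for every $x \in X$, together with the definition \eqref{eq:multivalued_function_PSI} of $\Psi$, gives $\mathbb P$-almost surely, for every $t \ge 0$,
\[
u^\psi(t) \in {\rm argmax}_{u \in U} \left\{ -b_0 \left(Y_0^\psi(t), \int_{-d}^0 a_1(\xi) Y_1^\psi(t)(\xi)\,d\xi, u\right) \cdot D_{x_0}V(Y^\psi(t)) - l(Y_0^\psi(t), u) \right\}.
\]
This is precisely the selection condition required by Theorem \ref{th:verification}, which therefore yields $J(x; u^\psi(\cdot)) \le \overline V(x)$. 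On the other hand, since $u^\psi(\cdot) \in \mathcal{\overline U}$, we have $\overline V(x) \le J(x; u^\psi(\cdot))$ directly from the definition of $\overline V$. Combining these with the identity $\overline V(x) = V(x)$ of Proposition \ref{prop:V_bar_equal_V} gives $J(x; u^\psi(\cdot)) = V(x)$, which is the claimed optimality of $(u^\psi(\cdot), Y^\psi(\cdot))$.

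There is essentially no real obstacle; the heavy lifting has already been carried out in Theorem \ref{th:verification}, which rests on the approximation machinery of Sections \ref{sec:approximation via inf-convolutions}--\ref{sec:lions_approx} and the non-smooth Dynkin formula of Section \ref{sec:dynkin_formula}. The only minor point requiring care is the admissibility step, namely ensuring that a Borel measurable $\psi$ composed with a progressively measurable process yields a progressively measurable control, and that one works in the generalized reference probability framework to accommodate the weak mild solution; both are handled automatically, the latter via the identity $\overline V = V$.
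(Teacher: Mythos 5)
Your proposal is correct and follows essentially the same route as the paper: identify $u^\psi(\cdot)$ as an admissible control in $\mathcal{\overline U}_\tau$, observe that $Y^\psi$ is then the (unique) mild solution of \eqref{eq:abstract_dissipative_operator} with that control, check the argmax condition holds by construction, and invoke Theorem \ref{th:verification}. You spell out the admissibility/progressive-measurability step and the final chain of inequalities slightly more explicitly than the paper, but the substance and the logical structure are the same.
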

\begin{proof}
First note that $u^\psi(\cdot) \in \mathcal{ \overline U}$. Hence $Y^\psi(\cdot)$ is the unique mild solution to \eqref{eq:abstract_dissipative_operator} (in the strong probabilistic sense now in the generalized reference probability space $\tau$). 
Now note that by construction, $u^\psi$ satisfies for every $s \geq 0$
  $$u^\psi(Y^\psi(s))\in {\rm argmax}_{u\in U} \Bigg \{ - b_0\left ( Y^\psi_0(s),\int_{-d}^0 a_1(\xi)Y_1^\psi(s)(\xi)\,d\xi ,u \right) \cdot D_{x_0}V(Y^\psi(s)) 
 - l(Y_0^\psi(s),u)  \Bigg \} .$$
 Hence, by the Verification Theorem \ref{th:verification}, we conclude that the pair $(u^\psi(\cdot),Y^\psi(\cdot))$ is optimal. 
\end{proof}

 We remark that we had to relax the class of admissible controls since the feedback map $\psi$ is not regular enough to guarantee existence of mild solutions of the closed loop equation in a reference probability space.
 \begin{remark}\label{rem:comparison_feedback_with_mild_and_bsde}
As mentioned in the introduction, using the dynamic programming approach in Hilbert spaces (remembering, as recalled there, that other approaches can also be used to tackle problems with delays), verification theorems and optimal feedback laws for stochastic optimal control problems with delays in the state have been studied using mild solutions in $L^2$ spaces or BSDE. Indeed, in the former a linear structure of the state equation and appropriate conditions ensuring the existence of an invariant measure are assumed, e.g. see \cite[Section 5.6]{fgs_book}. In the latter some regularity of the coefficients is assumed (e.g. differentiability and $\sigma_0$ having a bounded inverse), e.g. see \cite[Section 6.6]{fgs_book}. 
 Moreover, both approaches can handle pointwise delays when these appear in a linear way in the state equation (e.g. when the state equation is of the form $dy(t)=a y(t-d)+ \cdots$  for some $a \in M^{n \times n}$). 
We also refer to  \cite{masiero_2022}, for an approach using partial smoothing of the stochastic semigroup for a special class of problems. Our approach here, based on viscosity solutions of HJB equations in Hilbert spaces, allows us to work under different assumptions than those of the other approaches. Indeed, 
 apart from standard conditions, we assume the $|\cdot |_{-1}$-semiconvexity of the value function $V$ and a local non-degeneracy of $\sigma_0$.
 \end{remark}

  \section{Application to stochastic optimal advertising}\label{sec:example}
The following problem is taken from \cite[Section 7]{defeo_federico_swiech}, see also the seminal work \cite{gozzi_marinelli_2004} in the finite horizon case. We remark that in \cite[Section 4]{gozzi_marinelli_2004}, using mild solutions in $L^2$ spaces, optimal feedback laws were constructed, under appropriate assumptions guaranteeing the existence of an invariant measure for the stochastic semigroup.  Moreover, the problem can be treated using BSDE (e.g. see \cite[Chapter 6]{fgs_book}), under differentiability assumptions on the coefficients (including the running cost).
 As remarked in Remark \ref{rem:comparison_feedback_with_mild_and_bsde}, our approach here, based on viscosity solutions of HJB equations in Hilbert spaces, allows us to relax these assumptions. Here, instead, we assume the $|\cdot |_{-1}$-semiconvexity of the value function $V$ and a local non-degeneracy of $\sigma_0$. However, in the stochastic framework of the advertising problem (i.e. $\sigma_0 \in \mathbb R, \sigma_0 \neq 0$), these conditions are naturally satisfied by the problem. On the other hand, contrary to other approaches, we cannot treat point-wise delays. We finally refer to the introduction for different methods that, under suitable conditions, could be used to tackle the problem.

The model for the dynamics of the stock of advertising goodwill $y(t)$ of a product is given by the following controlled $1$-dimensional SDDE ($n=h=k=1$)
\begin{equation*}
\begin{cases}
dy(t) = \left[ a_0 y(t)+\int_{-d}^0 a_1(\xi)y(t+\xi)\,d\xi +c_0 u(t) \right]
         dt   + \sigma_0 \, dW(t),\\
y(0)=x_0, \quad y(\xi)=x_1(\xi)\; \quad \forall \xi\in[-d,0),
\end{cases}
\end{equation*}
where  $d>0$, the control process $u(s)$ models the intensity of advertising spending and $W$ is a real-valued Brownian motion, and
\begin{enumerate}[(i)]
\item $a_0 \leq 0$ is a constant factor of image deterioration in absence of advertising;
\item   $c_0 > 0$ is a constant advertising effectiveness factor; 
\item $a_1 \leq 0$ is a given deterministic function satisfying the assumptions used in the previous sections which represents the distribution of the forgetting time; 
\item $\sigma_0>0$ represents  the uncertainty in the model;
\item $x_0 \in \mathbb R$ is the level of goodwill at the beginning of the advertising campaign;
\item $x_1 \in L^2([-d,0];\mathbb R)$ is the history of the goodwill level.
\end{enumerate}
We use the same setup of the stochastic optimal control problem as in Section \ref{sec:formul}. The control set  is 
$U= [0,\bar u]$
for some $\bar u>0$.
The optimization problem is 
$$
\inf_{u(\cdot) \in \tilde{\mathcal U}}\mathbb{E} \left[\int_0^\infty e^{-\rho s} l(y(s),u(s)) d s\right],
$$
where  $\rho >0$ is a discount factor, $l(x,u)=h(u)-g(x)$, with a continuous and strictly convex cost function $h \colon [0,\bar u] \rightarrow \mathbb R$ and a continuous and concave utility function $g \colon \mathbb R \rightarrow \mathbb R$, which satisfy Assumption \ref{hp:cost}. Moreover we assume that $g$ is strictly increasing; $h \in C^0([0,\bar u]) \cap C^1([0,\bar u))$; $h$ is strictly increasing and $h(0)=0$; $h'(0)=0$, $\lim_{u \to \bar u} h'(u)=\infty.$

Setting
$$b_0 \left ( x_0,\int_{-d}^0 a_1(\xi)x_1(\xi)\,d\xi ,u\right):=a_0 x_0+\int_{-d}^0 a_1(\xi)x_1(\xi)\,d\xi +c_0 u,$$ 
we are then  in the setting of Section \ref{sec:formul}. Therefore, using the infinite dimensional framework of Section \ref{sec:prelim}, we can use Theorem \ref{th:existence_uniqueness_viscosity_infinite} to characterize the value function $V$ as the unique viscosity solution to \eqref{eq:HJB}, and Theorem \ref{th:C1alpha} to obtain partial regularity of $V$. Moreover $V$ is convex as the assumptions of Example \ref{ex:convex2} are satisfied.

We want to construct an optimal feedback for the optimization problem. Note that
$$
\tilde b(x,u)=\begin{bmatrix} b_0 \left ( x_0,\int_{-d}^0 a_1(\xi)x_1(\xi)\,d\xi ,u\right) +x_0 \\ 0   \end{bmatrix} = \begin{bmatrix} (a_0+1) x_0+\int_{-d}^0 a_1(\xi)x_1(\xi)\,d\xi \\ 0   \end{bmatrix}+ \begin{bmatrix}  c_0 u\\ 0   \end{bmatrix}
=: D x + E u,
$$
for every $x=(x_0,x_1) \in X, u \in [0,\overline u].$ Hence \eqref{eq:abstract_dissipative_operator} becomes
 \begin{equation}\label{eq:abstract_eq_example}
d Y(t) = [(\tilde A+D) Y(t)+E u(t) ] dt + \sigma \,dW(t), \quad Y(0) = x \in X.
\end{equation}
Notice that, since $h$ is strictly convex, then its continuous derivative $h' \colon [0,\bar u) \to [0,\infty)$ is strictly increasing. Hence it is invertible ant its inverse $(h')^{-1} \colon [0,\infty) \to [0,\bar u)$ is continuous. Then  $\Psi$ defined by \eqref{eq:multivalued_function_PSI} becomes
\begin{small}
\begin{align*}
   \Psi(x)&=argmax_{u \in U}   \Bigg \{ - \left (a_0 x_0+\int_{-d}^0 a_1(\xi)x_1(\xi)\,d\xi +c_0 u \right) D_{x_0}V(x)
 - h(u)+g(x)  \Bigg \} \\
 & =  argmax_{u \in U}   \Bigg \{ - c_0 u D_{x_0}V(x) 
 - h(u)  \Bigg \}= \begin{cases}
 & (h')^{-1} \left( c_0 D_{x_0}V(x) \right) \quad  \textit{if } D_{x_0}V(x)<0,\\
  &0 \quad  \quad  \quad  \quad  \quad  \quad  \quad  \quad  \quad   \textit{if } D_{x_0}V(x) \geq 0,
 \end{cases}
  \end{align*}
  \end{small}
where we have also used the fact that $c_0>0$ and that for fixed $x$ the argument of the argmax is a linear term $\theta(u)=-c_0u DV_{x_0}(x)$ perturbed by a strictly concave term $\nu(u)=-h(u)$ which is strictly decreasing and such that $\nu(0)=0, \nu'(0)=0$
Hence we can see $\Psi$ as a (single-valued) map, i.e. $\Psi \colon X \rightarrow U$ (so that the measurable selection is trivially $\psi=\Psi$). 
Moreover, since $D_{x_0}V$ is continuous, we have that $\Psi$ is continuous on $X.$

Next we study the solutions of the closed loop equation 
  \begin{equation}\label{eq:closed_loop_eq_example}
d Y(t) = [(\tilde A+D) Y(t)+ E\Psi(Y(t)) ] dt + \sigma \,dW(t), \quad Y(0) = x \in X,
\end{equation}
Fix any generalized probability space $ \nu=( \Omega,  {\mathcal F}, {\mathcal F}_t, \mathbb P,  W_t)$ and denote by $ Y(t)$ the unique mild solution of the (uncontrolled) equation
  \begin{equation*}
d Y(t) = (\tilde A + D) Y(t) dt + \sigma \,dW(t), \quad  Y(0) = x \in X.
\end{equation*}
Denote by $\sigma^{-1} \colon \sigma (\mathbb R) \subset X \to \mathbb R$ the inverse of the operator $\sigma \in \mathcal L(\mathbb R,X)$ and set 
\begin{small}
$$\phi \colon [0,\infty) \rightarrow \mathbb R, \quad \phi(t):=\sigma^{-1} E\Psi(Y(t)) =
\frac {c_0} {\sigma_0}\Psi(Y(t))  .$$
\end{small}
Since $|\Psi| \leq \overline u$, we have that $\phi$ is bounded so
 that \cite[Proposition 10.17 (i)]{DZ14} holds. This means that we can apply Girsanov theorem \cite[Theorem 10.14]{DZ14} to get  the existence of a probability $ \overline {\mathbb P}$ on $ \Omega$ under which 
$$\overline W_t:=-\int_0^t \phi(s) ds + W_t=-\sigma^{-1} \int_0^t E\Psi(Y(s)) ds + W_t$$
is a Wiener process. It follows that $ Y(t)$  is a  mild solution of \eqref{eq:closed_loop_eq_example} in the generalized reference probability space $\overline \nu:=( \Omega,  {\mathcal F}, {\mathcal F}_t, \overline  {\mathbb P},  \overline W_t)$. Hence $ Y(t)$  is a weak mild solution of \eqref{eq:closed_loop_eq_example}. 
Then we can apply Corollary \ref{cor:closed_loop} to get the optimality of $(u^\Psi(\cdot),Y^\Psi(\cdot))$ with $Y^\Psi(t)=Y(t)$, $u^\Psi(\cdot) :=  \Psi(Y^\Psi (\cdot))$.

 \appendix

\section{Comparison for SDDE}\label{sec:comparison}
We prove a comparison result for a class of SDDE. In particular we generalize the deterministic $1$-dimensional result \cite[Lemma 2.8]{goldys_1} to the multidimensional stochastic case with additive noise, under a more general drift $b_0$. 

In this appendix, we denote the positive part of $x \in \mathbb R$ by $x^+=\max(x,0)$; if $x \in \mathbb R^n$ this operation is understood to be component-wise. The inequalities $x < y, x  \leq y$ for $x, y \in \mathbb R^n$ are also understood component-wise, i.e. denoting by $x^i, y^i$ the $i$-th component of $x,y$, we have $x^i < y^i, x^i \leq y^i $ for every $i \leq n.$  

Fix a reference probability space $\tau=(\Omega,\mathcal{F},(\mathcal{F}_t)_{t\geq 0},
\mathbb{P}, W)$.  Let $x=(x_0,x_1) \in X, u(\cdot )  \in {\mathcal U}_\tau$ and consider the following SDDE with additive noise
\begin{small}
\begin{equation}\label{eq:ap1}
\begin{cases}
dy(t) = \ds b_0 \left ( y(t),\int_{-d}^0 a_1(\xi)y(t+\xi)\,d\xi ,u(t) \right)
         dt 
\ds  + \sigma_0  dW(t),\\
y(0)=x_0, \quad y(\xi)=x_1(\xi)\; \quad \forall \xi\in[-d,0).
\end{cases}
\end{equation}
\end{small}
 \begin{lemma}\label{lemma:comparison_sdde}
Let the standing assumptions of Section \ref{sec:formul} be satisfied  and let $a_1(\xi) \geq 0$ for $\xi\in [-d,0]$. Let $\sigma_0(x,z,u)=\sigma_0 \in M^{n \times q}$ and assume  that $b_0$ satisfies Assumption \ref{hp:state},
\begin{equation}\label{eq:assumption_b0_comparison_monotonone}
b_0^i(x,z_1,u) \leq b_0^i(x,z_2,u) \quad \forall i \leq n, z_1, z_2 \in \mathbb R^h, z_1 \leq z_2
\end{equation}
and 
\begin{equation}\label{eq:assumption_b0_comparison}
b_0^i(x,z,u)-b_0^i(y,z,u) \leq C [ | (x-y)^+| + |x^i-y^i| ] \quad \forall i \leq n, x,y \in \mathbb R^n, z \in \mathbb R^h, u \in U.
\end{equation}
  Let $y(t)$  be the solution of \eqref{eq:ap1} and let $x(t)$ satisfy 
 \begin{small}
\begin{equation}\label{eq:comparison_differential_ineq}
\begin{cases}
dx(t)=F(t)dt+\sigma_0 dW(t) \leq \ds b_0 \left ( x(t),\int_{-d}^0 a_1(\xi)x(t+\xi)\,d\xi ,u(t) \right)
         dt 
\ds  + \sigma_0  dW(t),\\
x(0)\leq x_0, \quad x(\xi)\leq x_1(\xi)\; \quad \forall \xi\in[-d,0),
\end{cases}
\end{equation}
\end{small}
where $F$ is an $\mathcal{F}_t$-progressively measurable process such that $F\in L^1_{\rm loc}([0,+\infty);\mathbb{R}^n)$, $\mathbb{P}$-a.s.
 Then $x(t)\leq y(t)$  for a.e. $\omega \in \Omega,$ $\forall t \geq 0.$
 \end{lemma}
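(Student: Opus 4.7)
The plan is to exploit the additive structure of the noise: since $\sigma_0$ is constant, the difference $z(t):=x(t)-y(t)$ has absolutely continuous paths almost surely, because the stochastic integrals in the equations for $x(t)$ and $y(t)$ cancel. This reduces the question to a pathwise argument that can be carried out $\omega$-by-$\omega$ on a set of full measure.

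Fix such an $\omega$ and work on a finite horizon $[0,T]$. For each $i=1,\dots,n$, from \eqref{eq:ap1} and \eqref{eq:comparison_differential_ineq}, on the set $\{z^i(t)>0\}$ we have
\begin{equation*}
\tfrac{d}{dt}z^i(t)\le b_0^i\!\left(x(t),\textstyle\int_{-d}^0 a_1(\xi)x(t+\xi)\,d\xi,u(t)\right)-b_0^i\!\left(y(t),\textstyle\int_{-d}^0 a_1(\xi)y(t+\xi)\,d\xi,u(t)\right).
\end{equation*}
To bound the right-hand side, I would introduce an intermediate term by splitting the difference as $[b_0^i(x,\int a_1 x,u)-b_0^i(x,\int a_1 y,u)]+[b_0^i(x,\int a_1 y,u)-b_0^i(y,\int a_1 y,u)]$. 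For the first bracket, the idea is to write $x(t+\xi)\le y(t+\xi)+z(t+\xi)^{+}$ componentwise; since $a_1(\xi)\ge 0$, this yields $\int a_1(\xi)x(t+\xi)\,d\xi\le \int a_1(\xi)y(t+\xi)\,d\xi+\int a_1(\xi)z(t+\xi)^{+}\,d\xi$, and monotonicity \eqref{eq:assumption_b0_comparison_monotonone} together with the Lipschitz bound from Assumption \ref{hp:state} gives
\begin{equation*}
b_0^i\!\left(x(t),\textstyle\int a_1 x\,d\xi,u\right)-b_0^i\!\left(x(t),\textstyle\int a_1 y\,d\xi,u\right)\le C\!\int_{-d}^0 a_1(\xi)\,|z(t+\xi)^{+}|\,d\xi.
\end{equation*}
For the second bracket, \eqref{eq:assumption_b0_comparison} gives a bound by $C(|z(t)^{+}|+|z^i(t)|)$, and on $\{z^i(t)>0\}$ we have $|z^i(t)|=z^i(t)^{+}$.

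Setting $\phi(t):=|z(t)^{+}|^2=\sum_i (z^i(t)^{+})^2$ and differentiating (which is legitimate since the positive part of an absolutely continuous function is absolutely continuous), I would combine the estimates above with Cauchy--Schwarz and Young's inequality to get, for a.e.\ $t\in[0,T]$,
\begin{equation*}
\phi'(t)\le C_1\,\phi(t)+C_2\int_{-d}^{0}\phi(t+\xi)\,d\xi.
\end{equation*}
The initial data assumption $x(0)\le x_0$ and $x(\xi)\le x_1(\xi)$ on $[-d,0)$ gives $\phi\equiv 0$ on $[-d,0]$. A change of variables $r=t+\xi$ and Fubini bound $\int_0^t\int_{-d}^0\phi(s+\xi)\,d\xi\,ds\le d\int_0^t\phi(r)\,dr$ (using $\phi=0$ on $[-d,0]$), so integration yields $\phi(t)\le C\int_0^t\phi(s)\,ds$, and Gronwall's lemma forces $\phi\equiv 0$ on $[0,T]$. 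Since $T$ is arbitrary and the exceptional $\omega$-set has measure zero, this gives $x(t)\le y(t)$ for all $t\ge 0$, $\mathbb P$-a.s.

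The main obstacle is the first bracket: one must ensure that the monotonicity hypothesis \eqref{eq:assumption_b0_comparison_monotonone} is invoked correctly even though $x(t+\xi)$ and $y(t+\xi)$ are not pointwise ordered. The trick of dominating $x(t+\xi)$ by $y(t+\xi)+z(t+\xi)^{+}$ before integrating against $a_1(\xi)\ge 0$ is what makes the monotonicity usable and explains the appearance of $\int a_1(\xi)|z(t+\xi)^{+}|\,d\xi$, which is exactly the quantity that closes the Gronwall loop.
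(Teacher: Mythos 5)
Your proof is correct and shares the two essential structural ideas with the paper's argument: (1) the additive noise makes $z=x-y$ pathwise absolutely continuous so the problem becomes an $\omega$-by-$\omega$ ODE comparison; and (2) one must dominate the delay integral $\int a_1(\xi)x(t+\xi)\,d\xi$ by $\int a_1(\xi)y(t+\xi)\,d\xi$ plus something controlled by $z^+$ before monotonicity \eqref{eq:assumption_b0_comparison_monotonone} can be invoked. The technical execution is, however, genuinely different and arguably cleaner. The paper works with $h=z^+$ directly, regularizes the positive part via the smooth truncations $\varphi_N$, crushes the delay integral down to a constant $d\,\bar a\,M$ where $M=\sup_{[0,r]}h$ and $\bar a=\sup_\xi a_1(\xi)$, and then runs a contradiction argument: Gronwall on a short interval $[0,r]$ yields $|h|\le\tfrac12|M|$ once $r$ is small enough, contradicting the definition of $M$, after which the argument is iterated over $[ir,(i+1)r]$. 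You instead use $\phi=|z^+|^2$, which is $C^1$ as a function of $z$, so no $\varphi_N$-approximation is needed; you retain the full delayed term $\int_{-d}^0\phi(t+\xi)\,d\xi$ in the differential inequality, convert it to $\int_0^t\phi$ via Fubini and the zero initial history, and close with a single Gronwall estimate valid on any $[0,T]$, with no contradiction and no iteration. Your route also replaces the paper's $L^\infty$ bound $\bar a$ by an $L^2$ bound on $a_1$ in the Cauchy--Schwarz step, which is harmless since $a_1^j\in W^{1,2}\subset L^2\cap L^\infty$. Both proofs are correct; yours avoids three layers of machinery (smoothing, sup-over-interval bound, iteration) at the modest cost of carrying the delayed integral through to the Gronwall step.
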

 \begin{flushleft}
 Notice that, for instance when $n=1,$ \eqref{eq:assumption_b0_comparison} is satisfied by non decreasing or Lipschitz functions $b_0(\cdot, z,u)$.
 \end{flushleft}
\begin{proof}
By \eqref{eq:ap1} and \eqref{eq:comparison_differential_ineq}, for a.e. $\omega \in \Omega$, $\forall t \geq 0$
 \begin{small}
\begin{align*}x(t)-y(t)& =x(0)-x_0+ \int_0^t \left[ F(s) - b_0 \left ( y(s),\int_{-d}^0 a_1(\xi)y(s+\xi)\,d\xi ,u(s) \right) \right] ds\\
& \leq x(0)-x_0 + \int_0^t\left[  b_0 \left ( x(s),\int_{-d}^0 a_1(\xi)x(s+\xi)\,d\xi ,u(s) \right)-b_0 \left ( y(s),\int_{-d}^0 a_1(\xi)y(s+\xi)\,d\xi ,u(s) \right) \right] ds.
\end{align*}
\end{small}
Hence  the process $(x-y)(t)=x(t)-y(t)$ is differentiable for a.e.  $\omega \in \Omega$, for a.e. $t \geq 0$ with 
 \begin{small}
\begin{align}\label{eq:comparison_inequality_(x-y)'}
(x-y)'(t)& =F(t)- b_0 \left ( y(t),\int_{-d}^0 a_1(\xi)y(t+\xi)\,d\xi ,u(t) \right)  \nonumber\\
& \leq  b_0 \left ( x(t),\int_{-d}^0 a_1(\xi)x(t+\xi)\,d\xi ,u(t) \right)-b_0 \left ( y(t),\int_{-d}^0 a_1(\xi)y(t+\xi)\,d\xi ,u(t) \right).
\end{align}
\end{small}

We define the  $\mathbb R^n$-valued process $h(t):=(x(t)-y(t))^+ \geq 0$  and   $\overline a:=\sup_{\xi \in [-d,0]}a_1(\xi) \in M^{h \times n}$ (these operations are understood component-wise).  We show by contradiction that $h(t)=0$ for a.e. $\omega \in \Omega$, $ \forall t \geq 0.$  Hence let $r>0$ and define the $\mathbb R^n$-valued random variable $M=\sup_{t\in [0,r]}h(t) \geq 0$.  By contradiction suppose $\exists G\in \mathcal F$ with $\mathbb P(G)>0$ such that $|M(\omega)| >0$ $\forall \omega \in G $.   Note that, since $a_1 \geq 0$ (component-wise), it holds
$$\int_{-d}^0 a_1(\xi) \left [ x(t+\xi) - y(t+\xi)  \right] \,d\xi \leq \int_{-d}^0 a_1(\xi) h(t)   \,d\xi \leq d \overline a  M \quad \forall t \in [0,r] .$$
By \eqref{eq:assumption_b0_comparison_monotonone}, we have for every $i \leq n,$ for a.e. $ \omega \in G$, $\forall t \in[0, r]$  
\begin{align}\label{eq:monotonocity_comparison}
b_0^i \left ( x(t),\int_{-d}^0 a_1(\xi)x(t+\xi)\,d\xi ,u(t) \right) \leq b_0^i \left ( x(t),\int_{-d}^0 a_1(\xi)y(t+\xi)\,d\xi + \overline a  M d ,u(t) \right).
\end{align}
Define, for $N \in \mathbb{N}$,
 \begin{small}
\begin{align}\label{eq:properties_phi_N_comparison_sdde}
\varphi_N(x):= \begin{cases}0, & \quad \forall  x \leq 0, \\ N x^2, & \quad \forall x \in(0,1 / 2 N] \\ x-1 / 4 N, & \quad \forall  x>1 / 2 N .\end{cases}
\end{align}
\end{small}
The sequence $\left \{ \varphi_N\right\} _{N \in \mathbb{N}} \subset C^1( \mathbb{R})$ is such that
$$
 \begin{aligned}
& \varphi_N(x)=\varphi_N^{\prime}(x)=0 \quad  \forall x \in(-\infty, 0], N \in \mathbb{N}, \quad \quad  0 \leq \varphi_N^{\prime}(x) \leq 1 \quad  \forall x \in \mathbb{R}, N \in \mathbb{N}, \\
& \varphi_N(x) \rightarrow x^{+} \quad \text { uniformly on } x \in \mathbb{R}, \quad \quad \quad \quad  \quad   \varphi_N^{\prime}(x) \rightarrow 1\quad  \forall x \in(0,+\infty) .
\end{aligned}
$$
Fix $i \leq n.$ Noticing that $\varphi_N\left(x^i(0)-x^i_0\right) = 0 $ (as $x(0) \leq x_0$) and using \eqref{eq:comparison_inequality_(x-y)'}, \eqref{eq:monotonocity_comparison}, \eqref{eq:assumption_b0_comparison} and Assumption \ref{hp:state}, we have for a.e. $ \omega \in G$, $\forall t \in[0, r]$  
\begin{small}
$$
\begin{aligned} \varphi_N(x^i(t)&-y(^it)) =\varphi_N\left(x^i(0)-x_0^i\right)+\int_0^t \varphi_N^{\prime}(x^i(s)-y^i(s))\left(x^i-y^i\right)'(s) d s \\
& \leq  \int_0^t \varphi_N^{\prime}(x^i(s)-y^i(s)) \left[ b_0^i\left(x(s), \int_{-d}^0 a_1(\xi) x(s+\xi) d \xi,u(s)\right)-b_0^i\left(y(s), \int_{-d}^0 a_1(\xi) y(s+\xi) d \xi,u(s)\right) \right ] d s \\
& \leq  \int_0^t  \varphi_N^{\prime}(x^i(s)-y^i(s)) \left[ b_0^i \left(x(s), \int_{-d}^0 a_1(\xi) y(s+\xi) d \xi + \overline a  M d ,u(s)\right)-b_0^i \left(y(s), \int_{-d}^0 a_1(\xi) y(s+\xi) d \xi,u(s)\right) \right ] d s \\
& \leq C \int_0^t \varphi_N^{\prime}(x^i(s)-y^i(s)) \left[|h(s)|+ |x^i(s)-y^i(s)|+ |\bar{a}| |M| d\right] ds. 
\end{aligned}
$$
\end{small}
Since $\varphi_N^{\prime}(z) =0 $ when $z\leq 0$, we have $\varphi_N^{\prime}(x^i(s)-y^i(s))  |x^i(s)-y^i(s)| \leq h^i(s) $, where we have used also the fact that  $\varphi_N' \leq 1$. Hence, using again $\varphi_N' \leq 1$, we have for a.e. $ \omega \in G$, $\forall t \in[0, r]$
$$
\begin{aligned}
 \varphi_N(x^i(t)-y^i(t))& \leq C \int_0^t (|h(s)| +  h^i(s)) d s +t C |\bar{a}| |M|  d  . 
\end{aligned}
$$
Letting $N \to \infty$,  for a.e. $ \omega \in G$, $\forall t \in[0, r]$
$$
\begin{aligned}
 h^i(t)& \leq C \int_0^t (|h(s)| +  h^i(s))d s +t C |\bar{a}| |M|  d  . 
\end{aligned}
$$
By summing over $i$, it follows $\forall t \in[0, r]$ (recall that $h^i(t) \geq 0$ for every $i$)
$$
\begin{aligned} 
& |h(t)| \leq C \sum_{i=1}^n h^i(t)\leq C \int_0^t \left(|h (s) |+ \sum_{i=1}^n  h^i(s)\right)  d s +t C |\bar{a}| |M|  d   \leq C \int_0^t |h(s)|d s +r C |\bar{a}| |M| d  . 
\end{aligned}
$$
Then, using Gronwall's lemma,  we have for a.e. $ \omega \in G$, $\forall t \in[0, r]$
$$
|h(t)| \leq r C |\bar{a}| |M|  d \cdot e^{C t} \leq  r C |\bar{a}| |M| d \cdot e^{C r}   \quad \forall t \leq r.
$$
Choosing $r$ such that $r C |\bar{a}| d \cdot e^{C r}  \leq 1/2$, we have for a.e. $ \omega \in G$, $\forall t \in[0, r]$
 $$ |h(t)| \leq  |M| /2.$$
Since we assumed $|M( \omega)|=|\sup_{t\in [0,r]}h(t)(\omega)|>0$ $\forall \omega \in G$, this is a contradiction by the definition of $M$ (recall also that $h(t) \geq 0$). Hence $M=0$ for a.e. $\omega \in \Omega$, so that $h=0$  for a.e. $\omega \in \Omega$, $\forall t \in [0, r]$. Iterating the argument on intervals of the form $[i r, (i+1) r]$ for every $i \in \mathbb N$ we obtain $h = 0$ for a.e. $\omega \in \Omega$,  $\forall t \geq 0$. 
\end{proof}
\paragraph{\textbf{Acknowledgments:}} The authors are grateful to Fausto Gozzi for useful remarks related to the content of the manuscript and to the referees for their careful reading and their helpful comments.
\paragraph{\textbf{Funding:}} Filippo de Feo acknowledges support from DFG CRC/TRR 388 "Rough
Analysis, Stochastic Dynamics and Related Fields", Project B05, by INdAM (Instituto Nazionale di Alta Matematica F. Severi) - GNAMPA (Gruppo Nazionale per l'Analisi Matematica, la Probabilità e le loro Applicazioni), and by the Italian Ministry of University and Research (MUR), in the framework of PRIN projects 2017FKHBA8 001 (The Time-Space Evolution of Economic Activities: Mathematical Models and Empirical Applications) and 20223PNJ8K (Impact of the Human Activities on the Environment and Economic Decision Making in a Heterogeneous Setting: Mathematical Models and Policy Implications).

\bibliographystyle{amsplain}

\end{document}